\documentclass[numbers,webpdf]{ima-authoring-template}%
\usepackage{geometry,graphicx,amssymb,amsmath,amsbsy,eucal,amsfonts,mathrsfs,amscd,bm,esint,yhmath}
\usepackage{stmaryrd}
\usepackage{xifthen}
\usepackage[normalem]{ulem}
\usepackage[all]{xy}
\usepackage{tikz}
\usetikzlibrary{arrows,shapes}
\usetikzlibrary{arrows, decorations.markings,fit}
\usetikzlibrary{calc,3d}
\usepackage{tikz-3dplot}
\usepackage{soul}
\usepackage{lipsum}

\DeclareFontFamily{U}{matha}{\hyphenchar\font45}
\DeclareFontShape{U}{matha}{m}{n}{
      <5> <6> <7> <8> <9> <10> gen * matha
      <10.95> matha10 <12> <14.4> <17.28> <20.74> <24.88> matha12
      }{}
\DeclareSymbolFont{matha}{U}{matha}{m}{n}
\DeclareFontSubstitution{U}{matha}{m}{n}

\DeclareFontFamily{U}{mathx}{\hyphenchar\font45}
\DeclareFontShape{U}{mathx}{m}{n}{
      <5> <6> <7> <8> <9> <10>
      <10.95> <12> <14.4> <17.28> <20.74> <24.88>
      mathx10
      }{}
\DeclareSymbolFont{mathx}{U}{mathx}{m}{n}
\DeclareFontSubstitution{U}{mathx}{m}{n}

\DeclareMathDelimiter{\vvvert}{0}{matha}{"7E}{mathx}{"17}

\tikzset{
    >=stealth',
    punkt/.style={
           rectangle,
           rounded corners,
           draw=black, very thick,
           text width=6.5em,
           minimum height=2em,
           text centered},
    pil/.style={
           ->,
           thick,
           shorten <=2pt,
           shorten >=2pt,}
}

\usepackage{listings}

\numberwithin{equation}{section}

\allowdisplaybreaks[3]

\theoremstyle{thmstyletwo}%
\newtheorem{theorem}{Theorem}[section]
\newtheorem{lemma}[theorem]{Lemma}

\newtheorem{corollary}[theorem]{Corollary}

\theoremstyle{definition}

\theoremstyle{remark}
\newtheorem{remark}[theorem]{Remark}

\newcommand{\ba}{\bm a}

\newcommand{\nab}{\nabla}

\newcommand{\bld}[1]{\boldsymbol{#1}}
\newcommand{\bt}{\bld{t}}

\newcommand{\bI}{{\bf I}}

\newcommand{\bA}{\bld{A}}

\newcommand{\bw}{\bld{w}}

\newcommand{\bp}{\bld{p}}
\newcommand{\bn}{\bld{n}}
\newcommand{\bu}{\bld{u}}

\newcommand{\bW}{\bld{W}}
\newcommand{\bV}{\bld{V}}

\newcommand{\bPi}{\bld{\Pi}}
\newcommand{\bz}{\boldsymbol{z}}

\newcommand{\bH}{{\bf H}}

\newcommand{\by}{\bld{y}}

\newcommand{\bX}{{\bm X}}

\newcommand{\bPhi}{{\bm \Phi}}

\newcommand{\calE}{\mathcal{E}}

\newcommand{\calS}{\mathcal{S}}
\newcommand{\calT}{\mathcal{T}}

\newcommand{\bPsi}{{\bm \Psi}}

\newcommand{\calN}{\mathcal{N}}
\newcommand{\bnu}{{\bm \nu}}
\newcommand{\calP}{\mathcal{P}}

\newcommand{\HT}{{\bf HT}}

\newcommand{\pt}{\wideparen}

\makeatletter
\def\widebreve{\mathpalette\wide@breve}
\def\wide@breve#1#2{\sbox\z@{$#1#2$}%
     \mathop{\vbox{\m@th\ialign{##\crcr
\kern0.08em\brevefill#1{0.8\wd\z@}\crcr\noalign{\nointerlineskip}%
                    $\hss#1#2\hss$\crcr}}}\limits}
\def\brevefill#1#2{$\m@th\sbox\tw@{$#1($}%
  \hss\resizebox{#2}{\wd\tw@}{\rotatebox[origin=c]{90}{\upshape(}}\hss$}
\makeatletter

\newcommand{\ipt}{\breve}

\begin{document}

\DOI{DOI HERE}
\copyrightyear{}
\vol{00}
\pubyear{}
\access{Advance Access Publication Date: Day Month Year}
\appnotes{Paper}
\copyrightstatement{Published by Oxford University Press on behalf of the Institute of Mathematics and its Applications. All rights reserved.}
\firstpage{1}


\title[$H({\rm div})$-conforming FEM for surface Stokes]{Error analysis of penalized $H({\rm div})$-conforming finite element methods for surface Stokes problems}

\author{Alan Demlow*
\address{\orgdiv{Department of Mathematics}, \orgname{Texas A\&M University}, \orgaddress{
\postcode{77843}, \state{Texas}, \country{USA}}}}
\author{Orsan Kilicer 
\address{\orgdiv{Department of Computer Technology}, \orgname{Istanbul Bilgi University}, \orgaddress{
\state{Istanbul}, \country{Turkey}}}}


\authormark{A. Demlow and O. Kilicer}

\corresp[*]{Corresponding author: \href{email:demlow@tamu.edu}{demlow@tamu.edu}}

\received{Date}{0}{Year}
\revised{Date}{0}{Year}
\accepted{Date}{0}{Year}


\abstract{In recent years a number of finite element methods have been developed for the surface Stokes equations.  Constructing standard conforming surface finite element methods for the velocity-pressure formulation of this problem would require simultaneously enforcing $H^1$ conformity and tangentiality of the velocity field, which is not possible on the generalized polyhedral approximating surfaces on which surface FEM are typically posed.  Thus methods for this problem weakly enforce either tangentiality or $H^1$ conformity of the velocity field.  In this paper we provide an error analysis for an $H({\rm div})$-conforming method based on classical Brezzi-Douglas-Marini elements.  In this method continuity of the velocity field is weakly enforced using an interior penalty-type formulation.  Advantages of this formulation include exact enforcement of the incompressibility constraint and its construction based on a standard mixed finite element pair.  Error analysis of this method in previous work was only valid for lowest-order elements and did not account for ``geometric errors'', i.e., variational crimes due to approximation of the continuous surface on which the PDE is posed by an approximating surface on which the finite element method is posed.  Here we provide a full error analysis in a natural discontinuous Galerkin energy norm, with geometric errors accounted for.  Optimal estimates for the $L_2$ velocity error are also presented.  In the case of lowest-order (polyhedral) surface approximations, our proof of optimal $L_2$ estimates requires a stricter than usual condition on the placement of mesh nodes relative to the surface.  Numerical experiments are used to illustrate our results.   }
\keywords{surface Stokes equation; finite element method; Brezzi-Douglas-Marini elements}


\maketitle

\thispagestyle{empty}

\section{Introduction}  In this paper we consider approximations to the surface Stokes problem 
\begin{equation}
\label{eq:surfstokes}
\begin{aligned}
-2 \bPi {\rm div}_\gamma {\rm Def}_\gamma \bu+\nab_\gamma p+\bu & = {\bm f}\qquad&\text{on }\gamma,\\
{\rm div}_\gamma \bu  = 0\text{ and } \bu\cdot \bnu &= 0 \qquad &\text{on }\gamma.
\end{aligned}
\end{equation}
Here $\gamma \subset \mathbb{R}^3$ is a smooth, closed two-dimensional surface, $\bnu$ is the outward unit normal to $\gamma$, and $\bPi = {\bf I}-\bnu \otimes \bnu$ is the projection onto the tangent plane of $\gamma$.  Also, $p$ is the pressure and $\bu$ is the velocity, and ${\rm Def}_\gamma$ and $\nabla_\gamma$ are the tangential rate of strain tensor and tangential scalar gradient, respectively.  We emphasize that due to the constraint $\bu \cdot \bnu=0$, the velocity $\bu$ is tangent to the surface $\gamma$.  Note that the mass term ``$+\bu$'' in \eqref{eq:surfstokes} is typically not present in corresponding Euclidean Stokes equations.  However, the potential presence of degeneracies due to Killing fields significantly complicates analysis of finite element methods for surface Stokes problems with no added mass term (cf. \cite{BDL20}).  The space of Killing fields is the kernel of the rate of strain tensor, and is nontrivial only when $\gamma$ possesses intrinsic isometries.  It is difficult to robustly detect whether such nondegeneracies exist and properly account for them in a typical surface finite element setting, so studies of finite element methods for the surface Stokes problem typically incorporate a mass term in order to ensure a well-posed problem independent of the surface geometry.

There has been significant recent interest in constructing effective finite element schemes for surface Stokes equations.  In the weak form of the velocity-pressure formulation \eqref{eq:surfstokes} the velocity field $\bu$ is taken to lie in the space $\HT^1(\gamma)$ consisting of tangential and componentwise $H^1$ vector fields.  In a typical surface finite element method (SFEM), the continuous surface $\gamma$ is first approximated by a polyhedral surface $\Gamma_h$ (or higher-degree polynomial analog) which is merely Lipschitz.  In order to construct a conforming method it would be necessary to construct a subspace of $\HT^1(\Gamma_h)$.  As was however explained in \cite{DN24}, it is not possible to construct meaningful finite element subspaces of $\HT^1(\Gamma_h)$ because any continuous tangential vector field must vanish at vertices of $\Gamma_h$ having at least three incident noncoplanar faces.  Thus surface FEM for the velocity-pressure formulation of the surface Stokes equations and related vector Laplace problems constructed on merely $C^0$ approximating surfaces weakly enforce either $H^1$ conformity or tangentiality of the velocity field.  These include works which enforce $H^1$ conformity strongly and enforce tangentiality weakly via either a Lagrange multiplier approach \cite{Fries18, GJOR18} or via penalization \cite{ BJPRV22, GJOR18, HansboLarsonLarsson20, HP23, HP25, JORZ21,  OQRY18,  ORZ21,  Maxim19}.  These approaches have also been used in the construction of trace-FEM formulations in which the surface mesh and finite element space are inherited from an ambient bulk space.   An alternative approach is to enforce tangentiality strongly and $H^1$ conformity weakly. In the recent papers \cite{DN24, DN26, KNPPP}, Piola transforms were used to interpret vector degrees of freedom at vertices, resulting in surface counterparts to standard Euclidean Stokes spaces including MINI, Taylor-Hood, and Scott-Vogelius that are $H({\rm div}, {\Gamma_h})$- and tangentially-conforming.  These spaces are not $H^1$ conforming but possess sufficient weak continuity to avoid penalization.   Finally, discretizations of stream function formulations of the surface Stokes equations have been considered in \cite{BJPRV22, BR20, Re20}.  

Our focus in this work is penalized $H({\rm div}, {\Gamma_h})$-conforming methods employing standard $BDM$ (Brezzi-Douglas-Marini) mixed spaces.   Such methods were originally proposed for Euclidean Stokes equations in \cite{CockburnEtal05, CKS07}.  Here lack of $H^1$ conformity--more precisely, lack of tangential continuity along element edges--is accounted for via an interior penalty type formulation.  These methods were adapted to the surface Stokes context in the papers \cite{BDL20, SurfaceStokes2}, and a streamfunction-harmonic formulation based on BDM spaces and similar penalty methods was recently studied in \cite{BLVWPP}.  In \cite{BDL20} the authors considered lowest-order ($\mathbb{P}^1-\mathbb{P}^0$) BDM spaces.  This work focused especially on formulating methods which robustly account for Killing fields. The paper also contained some error analysis, but did not rigorously account for ``geometric errors'' due to the approximation of $\gamma$ by $\Gamma_h$ and also did not include proof of a discrete inf-sup condition or error estimates for the pressure.  In \cite{SurfaceStokes2}  the authors formulated similar methods assuming arbitrary degree of finite element space and surface approximation. The focus of the latter work was algorithmic and computational, and no rigorous error analysis was provided.  Two major advantages of this method are its standard construction via canonical $H({\rm div})$-conforming elements and an interior penalty formulation, and its ability to exactly enforce the incompressibility constraint in \eqref{eq:surfstokes}.

Our main goal in this work is to provide a complete energy norm error analysis for penalized $H({\rm div})$-conforming SFEM for the surface Stokes equations employing $BDM_r$ ($\mathbb{P}^r-\mathbb{P}^{r-1}$, $r \ge 1$) finite element spaces and surface approximations $\Gamma_h^k$ ($k \ge 1$) that are piecewise $\mathbb{P}^k$.  Letting $\|\cdot\|_{DG,\gamma}$ denote a natural discontinuous Galerkin (DG) energy norm for the above problem which we shall define below, we prove that for velocity approximation $\bu_h$ and pressure approximation $p_h$ there holds
\begin{equation}
    \label{eq:introenergy}
\|  \bu-\pt{\bu}_h \|_{DG,\gamma}+ \|p -p_h^\ell\|_{L_2(\gamma)} \lesssim h^r+h^k
\end{equation}
Here $\pt \bu_h$ is a Piola transform of $\bu_h$ to $\gamma$, and similarly $p_h^\ell$ is a lift of $p_h$ to $\gamma$.  Also, the first term $h^{r}$ represents a ``Galerkin error'' arising from employing a finite element space in the variational formulation, while the second term $h^k$ represents a geometric error due to the approximation of $\gamma$ by its discrete counterpart in the formulation of the finite element method.  Note that this estimate is of optimal order when isoparametric ($k=r$) elements are used. We additionally provide $L_2$ error estimates of the form
\begin{equation} \label{intro:L2} 
\|\bu-\pt \bu_h\|_{L_2(\gamma)} \lesssim h^{r+1} + h^{k+1}.
\end{equation}
This estimate is also optimal in a typical isoparametric setting.  In the lowest-order case $k=1$ our proof of this result requires a nonstandard condition.  More precisely, for all edges $e$ of $\Gamma_h$ we require that $|d(z_0)-d(z_1)| \lesssim h^3$.  Here $z_0, z_1$ are the vertices of $e$ and $d$ is the signed distance function for $\gamma$.  It is standard and necessary to assume $\|d\|_{L_\infty(\Gamma_h)} \lesssim h^2$, so this condition is stricter than usual.  It holds for example when the vertices of the approximating polyhedron lie on the continuous surface or have either a small ($O(h^3)$) bias or a consistent bias away from the surface.  The proof of the relevant geometric error bounds is also substantially more involved when $k=1$ as compared with the higher-order cases $k \ge 2$.  Numerical experiments below indicate that this added condition may not be necessary to obtain optimal convergence when $k=1$.


We remark on some of the technical challenges that arise in this work.  As is typical when employing $H({\rm div})$-conforming spaces, we use Piola transforms to map vector quantities between surfaces.  Piola transforms, which involve multiplying vector fields by Jacobian-weighted tangent maps, are natural pullbacks for $H({\rm div})$ spaces.  However, in the context of surface Stokes equations we need to instead compare $H^1$-type forms of Piola transformed vector fields.  These comparisons require relatively involved technical computations.  The recent papers \cite{DN24, DN26} analyzing nonconforming MINI and Taylor-Hood methods similarly required comparison of $H^1$ inner products of Piola-transformed quantities.  The methods in those works do not however require penalty terms, so a substantial part of the novelty of the present work lies in the analysis of geometric errors arising from comparison of penalty terms of Piola transformed quantities.  We also prove ancillary results, such as discrete Korn-type inequalities, which are similarly significantly more involved in the context of penalized methods.  Finally, we establish inf-sup stability of our FEM, which was missing in previous analyses of penalized $H({\rm div})$-conforming methods for surface Stokes.    

An outline of the paper is as follows.  In Section \ref{sec:prelims} we give a number of preliminaries about surfaces, surface approximations, and finite element spaces.  In Section \ref{sec:geofoundations} we give important foundational results concerning norm equivalences on surfaces, discrete Korn inequalities, and comparison of the Stokes bilinear form on discrete and continuous surfaces.  Section \ref{sec:errors} contains error analysis for our finite element method.  Section \ref{sec:numerics} contains numerical examples.  Finally, we note that more detailed proofs of many of the results below related to energy-norm error estimates are contained in the Ph.D. thesis \cite{Kil25} of the second author.  

\section{Preliminaries}
\label{sec:prelims}

In this section we give a number of preliminaries concerning surface geometry and transformations between surfaces.  Much of this material concerns assumptions about higher-order surface approximations and transformation of scalar and vector functions between surfaces, including via Piola transforms.  In many cases we closely follow the recent work \cite{DN26}, which similar to this work concerns analysis of $H({\rm div})$-conforming FEM for surface Stokes equations on arbitrary-degree surface approximations.  Piola transforms of vector fields play a central role in both cases, but analysis of the method considered here presents substantial additional challenges due to the presence of DG-type jump  and average terms.

\subsection{Geometric preliminaries}
We assume that $\gamma$ is a smooth, closed, and orientable 2 dimensional surface embedded in $\mathbb{R}^3$.  Denote by $d$ the signed distance function for $\gamma$.  $d$ is defined on a tubular neighborhood $U$ about $\gamma$ with width given by the inverse of the maximum principal curvature on $\gamma$, and $\gamma=\{x \in U: d(x)=0\}$.  In addition, $\bnu = \nabla  d $ is the outward unit normal to $\gamma$, and $\Pi= \bI-\bnu\otimes \bnu$ is the projection onto the tangent plane to $\gamma$.  
  Let $\bp(x)=x-d(x)\nabla d(x) = x-d(x) \bnu(x)$, $x \in U$, be the closest point projection onto $\gamma$.  ${\bf H}(x) = D^2d = \nabla \bnu$ is the shape operator (Weingarten map).  

We also define tangential differential operators.  Assume that $u, \bw$ are respectively scalar and vector functions defined on $U$.  Then $\nabla_\gamma u = (\nabla u) \bPi$, $\nabla_\gamma \bw = \bPi \nabla \bw \bPi$, and ${\rm Def}_\gamma \bw = \frac{1}{2} ( \nabla_\gamma \bw + \nabla_\gamma \bw^\top) = \frac{1}{2} \bPi ( \nabla \bw + \nabla \bw^\top) \bPi$ is the rate of strain tensor.  These tangential differential operators are intrinsic in that they depend only on $u|_\gamma$ and $\bw|_\gamma$ even though their definition employs extensions to $U$.  Also, ${\rm div}_\gamma \bw = {\rm trace} \nabla_\gamma \bw$, and for a matrix ${\bf A}$ the divergence ${\rm div}_\gamma {\bf A}$ is computed by applying the vector tangential divergence operator rowwise.

\subsection{Surface approximations, meshes, and reference mappings}
We largely follow \cite{DN26} in our description of the triangulation.  Let $\bar{\Gamma}_h \subset U$ be a polyhedral surface approximation to $\gamma$ having triangular faces $\bar \calT_h$ and outward unit normal $\bar \bnu_h$.  The tangential projection operator on $\bar \Gamma_h$ is then $\bar \bPi_h = {\bf I}-\bar \bnu_h \otimes \bar \bnu_h$.  Let also $\hat{K} \subset \mathbb{R}^2$ be the standard reference triangle, and given $\bar{K} \in \bar\calT_h$ let $F_{\bar K}: \hat{K} \rightarrow \bar K$ be an affine reference map.  $\bar\calT_h$ is assumed to be shape regular in the sense that
\begin{equation} \label{eq:map1}
\|\nabla F_{\bar K}\|_{L_\infty(\hat K)} \lesssim h_{\bar K}, ~~~{\rm det} (\nabla F_{\bar K}^\top \nabla F_{\bar K}) \simeq h_{\bar K}^4,
\end{equation}
where $h_{\bar K} ={\rm diam}(\bar K)$ and $\nabla F_{\bar K} \in \mathbb{R}^{3 \times 2}$ is the Jacobian of $F_{\bar K}$.  We shall write $a \lesssim b$ to mean that $a \le Kb$ with $K>0$ a generic constant independent of $h$ but possibly depending on the penalty parameter $\rho$ defined below, and similarly for $\gtrsim$ and $\simeq$.  In a few instances we employ a generic constant $C$ which is taken to be independent of both $\rho$ and $h$.

Next we define a family $\Gamma_h^k$ ($k \ge 1$) of higher order surface approximations.  First let $\Gamma_{h,1} = \bar \Gamma_h$.  We assume that there exists a continuous map $\bPsi: \bar \Gamma_h \rightarrow \mathbb{R}^3$ such that $\bPsi_{\bar K} = \bPsi|_{\bar K} \in [\mathbb{P}_k(\bar K)]^3$ for $\bar K \in \bar \calT_h$ and 
\begin{equation}
\label{eq:map2}
|\bPsi(x)-\bp(x)| \lesssim h^{k+1}, ~~~ |\nabla \bPsi(x)-\nabla \bp(x)| \lesssim h^k.
\end{equation}
$\Gamma_h^k = \bPsi(\bar \Gamma_h)$ is then our degree-$k$ approximation to $\gamma$.  $\calT_{h,k} = \{ \bPsi(\bar K): \bar K \in \bar \calT_h \}$ is the (curved) triangulation associated to $\Gamma_h^k$.  Letting $\bnu_h$ be the outward unit normal to $\Gamma_h^k$, there then holds that for $x \in \Gamma_h^k$
\begin{equation}
\label{eq:geoest}
|d(x)| + h |(\bnu-\bnu_h)(x)| \lesssim h^{k+1}.
\end{equation}
We also let $\bPi_h = {\bf I}-\bnu_h \otimes \bnu_h$ denote the projection onto the tangent plane to $\Gamma_h^k$. Let also $\bt_h^k$ be a unit tangent vector to $e$ and $\bt_\gamma$ a unit tangent vector to $e^\gamma=\bPsi(e)$.  For the sake of concreteness, let $e=K_1 \cap K_2$ with $K_1, K_2 \in \calT_{h,k}$ and the numbering of $K_1$ and $K_2$ fixed relative to $e$.  We then let $\bt_h^k$ point in the counterclockwise direction on $K_1$.  Similarly, let  $\bn_{\bar e}$ be the outward unit conormal to $\bar{K}_1$ on $\bar e$ and $\bn_h^k$ the outward unit conormal to $K_1$ on $e$.  We also denote by $\bn_{h1}^k$ and $\bn_{h2}^k$ the outward conormals to $K_1$ and $K_2$ on $e$. On an edge $e^\gamma=K_1^\gamma \cap K_2^\gamma$, $\bn_\gamma$ is the outward unit conormal to $K_1^\gamma$ on $e^\gamma$, and thus the outward conormal to $K_2^\gamma$ on $e^\gamma$ is $-\bn_\gamma$.  Note that $\bn_\gamma \cdot \bn_{h1}^k>0$ while $\bn_\gamma \cdot \bn_{h2}^\gamma<0$.  Also, let $h_K = h_{\bar K}$ with $K = \bPsi(\bar K)$, and set $ h =\max_{K \in \calT_{h,k}} h_K$.  Finally, we assume that $\bar \calT_h$ is quasi-uniform, that is, $h_K =h_{\bar K} \simeq h$ for all $ \bar K \in \bar \calT_h$.  

Next we note that $\bPsi$ is typically realized concretely as a polynomial mapping from the reference element $\hat K \subset \mathbb{R}^2$.  More precisely,
\begin{equation}
\bPsi_{\bar K} = \ba_K \circ F_{\bar K}^{-1},
\end{equation}
where $\ba_K: \hat{K} \rightarrow K$ is a tri-degree $k$ diffeomorphism with $K = \bPsi (\bar K) \in \calT_{h,k}$.  The following properties are assumed:
\begin{equation}
\label{eq:map5}
|\ba_K|_{W_\infty^m(\hat K)} \lesssim h^m, ~~~~|\ba_K^{-1} |_{W_\infty^m(K)} \lesssim h^{-1} ~(1 \le m \le k+1), ~~~~{\rm det}( \nabla \ba_K^\top \nabla \ba_K) \simeq h^4.
\end{equation}
These estimates imply that
\begin{equation}
|\bPsi|_{W_\infty^m(\hat K)} \lesssim |F_{\hat K}^{-1} |_{W_\infty^1 (\bar K)}^m |\ba_K|_{W_\infty^m(\hat K)} \lesssim 1, ~~~~\bar K \in \bar \calT_h, ~~m \ge 1.
\end{equation}

Without loss of generality we assume that $F_{\bar K}(\hat{a}) = \ba_K (\hat a)$ for all vertices $\hat a \in \hat K$, i.e., $F_{\bar K}$ is the linear interpolant of $\ba_K$.  It then easily follows that
\begin{equation} \label{eq:map7}
\|\ba_k - F_{\bar K}\|_{W_\infty^m(\hat K)} \lesssim h^2, ~~~m \ge 0.
\end{equation}

Given $K \in \calT_{h,k}$ we let $K^\gamma = \bp(K)$ denote its image on the surface $\gamma$. The corresponding mesh on $\gamma$ is denoted by $\calT_{h,k}^\gamma = \{ K^\gamma: K \in \calT_{h,k} \}$.  Also, we let $\bw_K=\bw|_K$ for $K \in \calT_{h,k}$, and similarly for $\bw_{\bar K}$ and $\bw_{K^\gamma}$.  Finally, denote by $\varSigma$ the mesh skeleton (the union of triangle edges) of $\calT_{h,k}$ and by $\varSigma^\gamma$ its image on $\gamma$, and by $\calE$ and $\calE^\gamma$ the sets of edges on $\Gamma_h^k$ and $\gamma$, respectively.

{\bf Extensions and lifts}   Given a scalar function $w$ defined on the exact surface $\gamma$, let $w^e(x) = w^e(\bp(x))$ denote its extension to $U$ (and similarly for vector functions).  If on the other hand $w$ is defined on $\Gamma_h^k$, let $\tilde w (\bp(x)) = w(x)$ for $x \in \Gamma_h^k$.  We then define the lift $w^\ell(x) = \tilde{w}(\bp(x))$. With some abuse of notation we denote by $w^\ell$ both lifts and extensions, i.e., we write $w^\ell(x)=w(\bp(x))= w^e(x)$; this notation is intuitive because in the case of both lifts and extensions information is transmitted along normals to $\gamma$.  
 Also, $\bar{w}(x) = w(\bPsi(x))$, $x \in \bar\Gamma_h$.  

Next define $\mu_h: \Gamma_h^k \rightarrow  \mathbb{R}$ and $\bar \mu_h: \bar \Gamma_h \rightarrow \mathbb{R}$ by
\begin{equation}
\label{eq:mudef}
\mu_h = \bnu \cdot \bnu_h (1-d \kappa_1)(1-d \kappa_2), ~~~~\bar \mu_h = \frac{\sqrt{ {\rm det} \nabla {\ba}^\top \nabla  \ba}}{\sqrt{{\rm det} \nabla F_{\bar K}^\top \nabla F_{\bar K}}},
\end{equation}
where $\kappa_1$ and $\kappa_2$ are or the nonzero (in general) eigenvalues of ${\bf H} = D^2 d$ (these are the principal curvatures when evaluated on $\gamma$).  Then
\begin{equation}
 \int_{\Gamma_h^k} w^\ell \mu_h =\int_\gamma w, ~~~~w \in L_1(\gamma), \hbox{ and } \int_{\bar \Gamma_h} \overline{w} \bar \mu_h = \int_{\Gamma_h^k} w, ~~~~w \in L_1(\Gamma_h^k).  
\end{equation}
 Let also $\bar e$ and $e$ be edges of triangles in $\bar \calT_h$ and $\calT_{h,k}$, respectively, with $e = \bPsi(\bar e)$.  Let $\bt_{\bar e}$ denote a unit tangent vector to $\bar e$.  Then
 \begin{equation}
 \int_e q = \int_{ \bar e} \mu_{\bar e} q \circ \bPsi = \int_{e^\gamma} \mu_e^{-1} q \circ \bp^{-1}, ~~~q \in L_1(e).
 \end{equation}
Here $\mu_{\bar e}  = |\nabla \bPsi \bt_{\bar e}|$ and $\mu_e = |\nabla \bp|_{e} \bt_h^k|=|(\bPi-d {\bf H})\bt_h^k|$.   
\begin{lemma}  With definitions as above, there hold for $h$ sufficiently small
\begin{equation} \label{eq:tangent_conormal}
|1-\mu_h| + |1-\mu_e| + |\bn_\gamma-\bPi \bn_h^k|+|\bn_\gamma +\bPi \bn_{h2}^k|+|\bt_\gamma-\bPi \bt_h^k| \lesssim h^{k+1},
\end{equation}
\begin{equation} \label{eq:tc2}
|\bt_\gamma-\bt_h^k|+|\bn_\gamma-\bn_h^k|+|\bn_\gamma+\bn_{h2}^k| \lesssim h^k,
\end{equation}
and
\begin{equation} \label{eq:tc3}
|1-\bar \mu_h | + |1-\mu_{\bar e}| +|\bar \bnu_h-\bnu_h|+ |\bt_{\bar \Gamma_h} -\bt_h^k|+ |\bn_{\bar \Gamma_h}-\bn_h^k|  \lesssim h.
\end{equation}
In addition, for $q \in L_1(e)$ ($e \in \calE$) and $1 \le p \le \infty$ there holds
\begin{equation} \label{eq:edgenormeq}
\|q \|_{L_p(e)} \simeq \|q \circ \bPsi\|_{L_p(\bar e)} \simeq \|q \circ \bp^{-1} \|_{L_p(e^\gamma)}.
\end{equation}
\end{lemma}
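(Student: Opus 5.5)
The plan is to reduce everything to the basic geometric estimates \eqref{eq:map2} and \eqref{eq:geoest}, together with the identity $\nabla \bp = \bPi - d{\bf H}$ on $U$. I would prove the $O(h^{k+1})$ bounds \eqref{eq:tangent_conormal} first, then the cruder bounds \eqref{eq:tc2} and \eqref{eq:tc3}, and finally the norm equivalences \eqref{eq:edgenormeq}.

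\emph{The quantities $\mu_h$ and $\mu_e$.} For $\mu_h$ I would expand $\mu_h=\bnu\cdot\bnu_h(1-d\kappa_1)(1-d\kappa_2)$. Since $|d|\lesssim h^{k+1}$, the curvature factors are $1+O(h^{k+1})$. For the first factor, $\bnu\cdot\bnu_h = 1-\frac12|\bnu-\bnu_h|^2 \ge 1-Ch^{2k}$, and $2k\ge k+1$. For $\mu_e$, the vector $\bt_h^k$ is tangent to $\Gamma_h^k$, so $|\bnu\cdot \bt_h^k| = |(\bnu-\bnu_h)\cdot\bt_h^k|\lesssim h^k$. Hence $|\bPi\bt_h^k|^2 = 1-(\bnu\cdot\bt_h^k)^2 = 1+O(h^{2k})$, and $|d{\bf H}\bt_h^k|\lesssim h^{k+1}$, which gives $|1-\mu_e|\lesssim h^{k+1}$.

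\emph{Tangents and conormals.} Because $\bp(e)=e^\gamma$, the chain rule gives $\bt_\gamma = \nabla\bp\,\bt_h^k/\mu_e = (\bPi-d{\bf H})\bt_h^k/\mu_e$, with orientation fixed consistently. Therefore $|\bt_\gamma-\bPi\bt_h^k|\lesssim |1-\mu_e|+|d|\lesssim h^{k+1}$.

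For the conormals I would write $\bn_h^k = \bt_h^k\times\bnu_h$ (up to a fixed sign convention) and $\bn_\gamma=\bt_\gamma\times\bnu$. Then I would compute $\bPi\bn_h^k$ and compare. The naive bound $|\bnu_h-\bnu|\lesssim h^k$ only gives $O(h^k)$, so the main obstacle is gaining the extra power of $h$. The way through is to note that $\bn_\gamma$ is determined, as the unit vector in $T\gamma$, by the conditions $\bn_\gamma\perp\bt_\gamma$ and $\bn_\gamma\cdot \bn_h^k>0$. The vector $\bPi\bn_h^k$ lies in $T\gamma$, and
\begin{equation*}
\bPi\bn_h^k\cdot\bt_\gamma = \bn_h^k\cdot\bPi\bt_\gamma = \bn_h^k\cdot\bt_\gamma = \bn_h^k\cdot(\bt_\gamma-\bt_h^k).
\end{equation*}
For this inner product I would use $\bt_\gamma-\bt_h^k = -(\bnu\cdot\bt_h^k)\bnu + O(h^{k+1})$ and $\bn_h^k\cdot\bnu = (\bnu-\bnu_h)\cdot \bn_h^k = O(h^k)$, so the product is $O(h^{2k})+O(h^{k+1})$. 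Likewise $|\bPi\bn_h^k| = 1+O(h^{2k})$. Since the unit vector in the two-dimensional space $T\gamma$ orthogonal to $\bt_\gamma$ is determined up to sign, this gives $|\bn_\gamma-\bPi\bn_h^k|\lesssim h^{k+1}$. The bound for $\bn_{h2}^k$ follows in the same way, because $\bn_{h2}^k$ is the conormal of $K_2$ and is orthogonal to the same edge tangent.

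\emph{The $O(h^k)$ bounds \eqref{eq:tc2}.} These follow from the previous step, since $|{\bf I}-\bPi|$ applied to a unit vector tangent to $\Gamma_h^k$ is bounded by $|\bnu\cdot \bt_h^k|\lesssim h^k$.

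\emph{The $O(h)$ bounds \eqref{eq:tc3}.} I would use \eqref{eq:map7} and \eqref{eq:map5}. Writing $\bPsi=\ba_K\circ F_{\bar K}^{-1}$, we have $\nabla\bPsi-\bar\bPi_h = (\nabla\ba_K-\nabla F_{\bar K})\nabla F_{\bar K}^{-1} = O(h^2)\cdot O(h^{-1})=O(h)$. This yields the bounds for $\bar\mu_h$ (a ratio of Gram determinants), for $\mu_{\bar e}=|\nabla\bPsi\bt_{\bar e}|$, and for the tangents $\bt_h^k = \nabla\bPsi\bt_{\bar e}/\mu_{\bar e}$. The normals and conormals then follow via cross products.

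\emph{The norm equivalences \eqref{eq:edgenormeq}.} These are immediate from the change-of-variables formulas for $\int_e q$ together with $\mu_{\bar e},\mu_e\simeq 1$ for $h$ small, which was established above. The case $p=\infty$ is trivial, since the maps are bijections.
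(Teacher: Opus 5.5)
Your proof is correct. For \eqref{eq:tc2}, \eqref{eq:tc3} and \eqref{eq:edgenormeq} it follows essentially the paper's lines, but for \eqref{eq:tangent_conormal} you take a genuinely different, self-contained route.

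The paper handles \eqref{eq:tangent_conormal} by citing the literature for the bounds on $\mu_h$, $\mu_e$ and the two conormal estimates, and then deduces the tangent estimate from the conormal one. You prove everything directly and in the reverse order:
\begin{itemize}
\item the identities $\bnu\cdot\bnu_h=1-\frac12|\bnu-\bnu_h|^2$ and $|\bPi\bt_h^k|^2=1-(\bnu\cdot\bt_h^k)^2$ give the bounds on $\mu_h$ and $\mu_e$;
\item the chain-rule formula $\bt_\gamma=(\bPi-d\bH)\bt_h^k/\mu_e$ gives the tangent bound at order $h^{k+1}$ (the paper uses this formula only for the cruder bound in \eqref{eq:tc2});
\item the conormal bound then follows by decomposing $\bPi\bn_h^k$ in the basis $\{\bt_\gamma,\bn_\gamma\}$ of the tangent plane, with the sign fixed by the stated convention $\bn_\gamma\cdot\bn_{h1}^k>0$ (and $<0$ for $\bn_{h2}^k$).
\end{itemize}
Your key identity $\bPi\bn_h^k\cdot\bt_\gamma=\bn_h^k\cdot(\bt_\gamma-\bt_h^k)=O(h^{k+1})-(\bnu\cdot\bt_h^k)(\bnu\cdot\bn_h^k)$ shows why the extra power of $h$ appears. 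Every discrepancy tangential to $\gamma$ is either $O(|d|)$ or quadratic in $\bnu-\bnu_h$, and $2k\ge k+1$.

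In \eqref{eq:tc3} there is also a small difference. You bound $\nabla\bPsi-\bar\bPi_h=(\nabla\ba_K-\nabla F_{\bar K})\nabla F_{\bar K}^\dagger$ using only the parametrization assumptions \eqref{eq:map1} and \eqref{eq:map7}. The paper instead inserts $\nabla\bp$ and uses \eqref{eq:map2} together with \eqref{eq:geoest} for $k=1$.

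The paper's version is shorter. Yours is self-contained and makes the mechanism behind the superlinear geometric bounds explicit.
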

\begin{proof} \hspace{.1cm}
The first inequality in \eqref{eq:tangent_conormal} can be found in \cite{Demlow09} and the second in \cite{CD16}.  The third and fourth are found in \cite{ADMSSV15}, while the fifth is an easy consequence of the third.  

The first inequality in \eqref{eq:tc2} can be derived by noting that $\bt_\gamma = \frac{(\bPi-d{\bH}) \bt_{\Gamma_h^k}}{\left |(\bPi-d{\bH}) \bt_{\Gamma_h^k} \right |}$ and employing \eqref{eq:geoest}.  The second and third follow from the first and \eqref{eq:geoest} by noting that $\bn_\gamma=\bt_\gamma \times \bnu$ and similarly for $\bn_h^k$.  

Let $F_i$ and $a_i$ be the $i-th$ columns of $\nabla F_{\bar K}$ and $\nabla \ba_K$, respectively.   To prove the first inequality in \eqref{eq:tc3}, note that 
\begin{equation}
\label{eq:muh1}
\begin{aligned}
1-\bar\mu_h &= 1-\frac{\sqrt{{\rm det} \nabla \ba_K^\top \nabla \ba_K}}{\sqrt{{\rm det} \nabla F_{\bar K}^\top \nabla F_{\bar K}}} = \frac{{\rm det} \nabla F_{\bar K}^\top \nabla F_{\bar K}- {\rm det} \nabla \ba_K^\top \nabla \ba_K}{\sqrt{{\rm det} \nabla F_{\bar K}^\top \nabla F_{\bar K}}\left (\sqrt{{\rm det} \nabla F_{\bar K}^\top \nabla F_{\bar K}} + \sqrt{{\rm det} \nabla \ba_K^\top \nabla \ba_K} \right)}
\\ & = \frac{[(F_1\cdot F_1)(F_2\cdot F_2)-(F_1 \cdot F_2)^2] - [(a_1 \cdot a_1)(a_2\cdot a_2) -(a_1 \cdot a_2)^2]}{\sqrt{{\rm det} \nabla F_{\bar K}^\top \nabla F_{\bar K}}\left (\sqrt{{\rm det} \nabla F_{\bar K}^\top \nabla F_{\bar K}} + \sqrt{{\rm det} \nabla \ba_K^\top \nabla \ba_K} \right)} 
\end{aligned}
\end{equation}
In order to bound the numerator, we may use \eqref{eq:map1}, \eqref{eq:map5}, and \eqref{eq:map7} to compute
\begin{equation}
\label{eq:muh2}
\begin{aligned}
(F_1 \cdot F_2)^2-(a_1 \cdot a_2)^2 & = (F_1\cdot F_2-a_1 \cdot a_2)(F_1 \cdot F_2 + a_1 \cdot a_2) 
\\ & = ((F_1 -a_1) \cdot F_2+a_1 \cdot (F_2-a_2))(F_1 \cdot F_2 + a_1 \cdot a_2) 
\\ & \lesssim (h^2 h-h h^2)(h^2) = h^5,
\end{aligned}
\end{equation}
and similarly for the term $(F_1 \cdot F_1)(F_2 \cdot F_2)-(a_1 \cdot a_1)(a_2 \cdot a_2)$.  The denominator is bounded from below by $Ch^4$ by \eqref{eq:map1} and \eqref{eq:map5}, which yields the desired bound $|1-\bar \mu_h| \lesssim h$ when combined with \eqref{eq:muh1} and \eqref{eq:muh2}.  The third inequality in \eqref{eq:tc3} follows from $\bar \bnu_h = \frac{F_1 \times F_2}{|F_1 \times F_2|}$, $\bnu_h = \frac{a_1 \times a_2}{|a_1 \times a_2|}$, \eqref{eq:map1}, \eqref{eq:map5}, and \eqref{eq:map7}.   Next note that for $x \in \bar\Gamma_h$, $|(\nabla \bPsi-\bar \bPi_h)(x)| \le |(\nabla \bPsi-\nabla p)(x)|+|(\nabla p-\bar \bPi_h)(x)| \lesssim h^k + |\bPi-d\bH -\bar\bPi_h| \lesssim h$.  Here we have applied \eqref{eq:map2} and then \eqref{eq:geoest} with $k=1$.  This yields the second inequality in \eqref{eq:tc3} after noting that $1-\mu_{\bar e} =1-|\nabla \bPsi \bt_{\bar e}| $.  The fourth inequality similarly follows by noting that $\bt_h^k =\frac{\nabla \bPsi \bt_{\bar e}}{|\nabla \bPsi \bt_{\bar e}|}$.  The fifth inequality follows from the fourth and \eqref{eq:geoest} after noting that $\bn_{\bar \Gamma_h} = \bt_{\bar \Gamma_h} \times \bar \bnu_h$ and $\bn_h^k = \bt_h^k \times \bnu_h^k$.  

Finally, \eqref{eq:edgenormeq} follows from \eqref{eq:tangent_conormal} and \eqref{eq:tc3} for $h$ sufficiently small.  
\end{proof}

\subsection{Piola transforms}   Following \cite{CD16} (cf. \cite{Steinmann08}), we next describe Piola transforms between surfaces in $\mathbb{R}^3$.  Let $\bPhi: \calS_0 \rightarrow \calS_1$ be a diffeomorphism between surfaces $\calS_0$ and $\calS_1$.  Let $d \sigma_i$ be surface measure on $\calS_i$ and let $\mu$ satisfy $\mu d \sigma_0 = d\sigma_1$.  The Piola transform and inverse Piola transform of vector fields $\bu: \calS_0 \rightarrow \calS_1$, $\bw: \calS_1 \rightarrow \calS_0$ with respect to the mapping $\bPhi$ are defined by
\begin{equation}
(\calP_{\bPhi} \bu) \circ \bPhi = \mu^{-1} \nabla \bPhi \bu, ~~~~(\calP_{\bPhi^{-1}} \bw) \circ \bPhi^{-1} = (\mu \circ \bPhi^{-1}) \nabla \bPhi^{-1} \bw.
\end{equation}
There in addition holds
\begin{equation}
\label{eq:pioladiv}
{\rm div}_{\calS_0} \bu = \mu {\rm div}_{\calS_1} \calP_\bPhi \bu.
\end{equation}

When $\calS_0 = \Gamma_h^k$, $\calS_1=\gamma$, $\bPhi = \bp$, $\bPi_h \bu = \bu$, and $\bPi \bw =\bw$, we have
\begin{equation} \label{eq:pioladef}
\pt \bu \circ \bp := \calP_{\bp} \bu = \frac{1}{\mu_h} [ \bPi-d {\bf H}] \bu, \hspace{.5cm} \ipt \bw := \calP_{\bp^{-1}} \bw = \mu_h \left [ {\bf I}-\frac{\bnu \otimes \bnu_h}{\bnu \cdot \bnu_h} \right ] [ {\bf I}-d {\bf H}]^{-1} (\bw \circ \bp).
\end{equation}
We additionally define for $\bu \in H({\rm div}, \Gamma_H^k)$
\begin{equation}
\bar \bu = \calP_{\bPsi^{-1}} \bu \in H({\rm div}, \bar \Gamma_h).
\end{equation}

Relationships between mapped and Piola transformed functions on various surfaces play a critical role in our analysis.  The following results \eqref{eq:pprops}--\eqref{eq:defequiv} concerning such relationships are summarized from  \cite[\S 2.3]{DN26}.  

\begin{lemma}
Assume that the conditions \eqref{eq:map1}-\eqref{eq:map7} concerning the construction of $\bar{\Gamma}_h$ and $\Gamma_h^k$ hold.   First, identifying with slight abuse of notation a Piola transform $\calP$ with the matrix representing it we have
\begin{equation}
\label{eq:pprops}
|\mu_h|_{W_\infty^m(K)}+|\mu_h^{-1}|_{W_\infty^m(K)}+\|\calP_{\bp}\|_{W_\infty^m(K)} + \|\calP_{\bp^{-1}} \|_{W_\infty^m(K^\gamma)} \lesssim 1, ~~m \ge 0 \hbox{ and } K \in \calT_{h,k}, 
\end{equation}
and recalling that $\pt \bw = \calP_{\bp} \bw$, 
\begin{equation}
\label{eq:normequiv}
\|\pt \bw \|_{W_p^m(K^\gamma)} \simeq \|\bw \|_{W_p^m(K)}, \hspace{.75cm} \bw \in W_p^m(K), ~~K \in \calT_{h,k},~~m \ge 0, \hbox{ and } p \in [1,\infty].
\end{equation}
 Then  
\begin{equation}\label{eq:Ppsiequiv}
\begin{aligned}
|\calP_{\bPsi_{\bar K}} \bar \bw-\bar \bw| & \lesssim h|\bar\bw|, \hspace{.5cm} \bar \bw \cdot \bar \bnu_h = 0,
\\|\calP_{\bPsi_{K}^{-1}} \bw -\bw| & \lesssim h|\bw|,\hspace{.5cm} \bw \cdot \bnu_h=0.
\end{aligned}
\end{equation}
Given a matrix $\bA$, denote by $\bA^\dagger = (\bA^\top \bA)^{-1} \bA^\top$ its Moore-Penrose inverse.  There then holds for $m \ge 0$ and $K \in \calT_{h,k}$ that
\begin{equation}
\label{eq:piolaprops}
\begin{aligned}
|\calP_{\ba_K}|_{W_\infty^m(\hat{K})} & \lesssim h^{m-1}, \qquad & |(\calP_{\ba_K})^{\dagger} |_{W_\infty^m(\hat{K})}  \lesssim h^{1+m},
\\ \|\calP_{\bPsi}\|_{W_\infty^m(\bar K)} & \lesssim 1, & \|\calP_{\bPsi^{-1}}\|_{W_\infty^m(K)}  \lesssim 1.
\end{aligned} 
\end{equation}
Thus if $\bw \in H^m(K)$ and $\hat \bw \in H^m(\hat K)$ with $\bw= \calP_{\ba_K} \hat \bw$, there holds
\begin{equation}
|\bw|_{H^m(K)} \lesssim \sum_{\ell=0}^m h^{-\ell} |\hat \bw|_{H^\ell (\hat K)}, \qquad |\hat \bw|_{H^m(\hat K)} \lesssim h^m \|\bw \|_{H^m(K)}.
\end{equation}
In addition, for $\bar \bw \in [H^m(\bar K)]^3$ with $\bar \bw \cdot \bar \bnu_h=0$ there holds
\begin{equation} \label{eq:normequiv2}
\|\calP_{\bPsi}  \bar \bw\|_{H^m(K)} \simeq \|\bar \bw\|_{H^m( \bar K)}, \qquad K = \bPsi(\bar K).
\end{equation}
Finally, for $\bw \in \HT^1(K)$ there holds
\begin{equation}
\label{eq:defequiv}
|{\rm Def}_\gamma \pt \bw -({\rm Def}_{\Gamma_h^k} \bw) \circ \bp^{-1}| \lesssim h^k ( |(\nabla_{\Gamma_h^k} \bw ) \circ \bp^{-1} | + |\bw \circ \bp^{-1}|) \hbox{  on } K^\gamma.
\end{equation}

\end{lemma}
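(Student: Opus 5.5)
We prove the lemma by writing every Piola transform as an explicit matrix field and differentiating that field with the Leibniz rule, using the geometric bounds \eqref{eq:map1}--\eqref{eq:map7} and \eqref{eq:geoest}.

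\textbf{Bound \eqref{eq:pprops}.} We start from the formula \eqref{eq:mudef} for $\mu_h$ and the matrices in \eqref{eq:pioladef}.
\begin{itemize}
\item $\bnu$, $\bH$, $\kappa_i$ and $d$ are smooth on $U$, so all their derivatives are bounded independently of $h$.
\item $\bnu_h$ restricted to $K$ is obtained from $\nabla \ba_K$ through cross products and normalization. Composing with $\ba_K^{-1}$ and using \eqref{eq:map5}, each derivative of $\ba_K$ of order $m$ contributes $h^m$ and each derivative of $\ba_K^{-1}$ contributes $h^{-1}$. Hence $|\bnu_h|_{W_\infty^m(K)}\lesssim 1$.
\item $\mu_h \simeq 1$ and $\bnu\cdot\bnu_h \simeq 1$ by \eqref{eq:tangent_conormal} and \eqref{eq:geoest}, so the quotients in \eqref{eq:pioladef} have bounded derivatives.
\end{itemize}
This gives \eqref{eq:pprops}. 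The bound for $\calP_{\bp^{-1}}$ on $K^\gamma$ follows in the same way after composing with $\bp^{-1}$, whose derivatives are bounded because $\bp$ restricted to $K$ is a uniformly bi-Lipschitz, smooth-in-$h$ diffeomorphism.

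\textbf{Bound \eqref{eq:normequiv}.} This follows from \eqref{eq:pprops}, the chain rule, $\mu_h\simeq 1$, and the fact that the inverse transform is again of the same form. Applying the same argument in both directions gives the equivalence.

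\textbf{Bound \eqref{eq:Ppsiequiv}.} Here $\calP_{\bPsi}\bar\bw=\bar\mu_h^{-1}\nabla\bPsi\,\bar\bw$. By \eqref{eq:tc3}, $|1-\bar\mu_h|\lesssim h$. As in the proof of \eqref{eq:tc3}, $|\nabla\bPsi-\bar\bPi_h|\lesssim h$, and $\bar\bPi_h\bar\bw=\bar\bw$ since $\bar\bw$ is tangential. Combining these gives $|\calP_{\bPsi}\bar\bw-\bar\bw|\lesssim h|\bar\bw|$. The inverse transform is handled symmetrically.

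\textbf{Bound \eqref{eq:piolaprops}.} We write $\calP_{\ba_K}=(\det\nabla\ba_K^\top\nabla\ba_K)^{-1/2}\nabla\ba_K$.
\begin{itemize}
\item For $\calP_{\ba_K}$: by \eqref{eq:map5} each derivative of $\nabla\ba_K$ carries a factor $h$, while the determinant factor is of size $h^{-2}$ with derivatives that do not worsen this scaling. This gives $h^{m+1}\cdot h^{-2}=h^{m-1}$.
\item For the Moore--Penrose inverse: $(\calP_{\ba_K})^\dagger=\sqrt{\det}\,(\nabla\ba_K^\top\nabla\ba_K)^{-1}\nabla\ba_K^\top$, which scales as $h^2\cdot h^{-2}\cdot h=h$, and each derivative adds a factor $h$.
\item For $\calP_{\bPsi}=\calP_{\ba_K}\circ(\calP_{F_{\bar K}})^{\dagger}$: both factors are expressed on $\hat K$ and composed with $F_{\bar K}^{-1}$, whose derivatives scale like $h^{-1}$. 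The powers of $h$ cancel, giving the $W^m_\infty$ bound $\lesssim 1$.
\end{itemize}

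\textbf{Scaled Sobolev bounds.} These follow from the Leibniz rule together with \eqref{eq:piolaprops}, chain-rule scaling through $\ba_K^{-1}$, and $|K|\simeq h^2|\hat K|$. Bound \eqref{eq:normequiv2} follows in the same way from $\|\calP_{\bPsi}\|_{W^m_\infty}\lesssim1$, its inverse bound, and bounded Jacobians.

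\textbf{Bound \eqref{eq:defequiv}.} This is the main obstacle. We would proceed as follows.
\begin{itemize}
\item Extend $\bw$ constantly along normals, write $\pt\bw=\mu_h^{-1}(\bPi-d\bH)\bw$, and differentiate by the chain rule using $\nabla\bp=\bPi-d\bH$.
\item This gives $\nabla_\gamma\pt\bw\circ\bp = \mu_h^{-1}\bPi(\bPi-d\bH)\nabla\bw\,(\bPi-d\bH)^{-1}\bPi$ plus terms containing derivatives of $\mu_h^{-1}(\bPi-d\bH)$.
\item The derivative terms are $O(h^k)|\bw|$. The reason is that the tangential derivative of $d$ along $\Gamma_h^k$ equals $\bnu\cdot\bPi_h$-type quantities, which are $O(h^k)$ by \eqref{eq:geoest}. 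Likewise, $\nabla_{\Gamma_h^k}\mu_h=O(h^k)$, obtained by differentiating $\bnu\cdot\bnu_h$ and using $|\bnu-\bnu_h|\lesssim h^k$ together with \eqref{eq:map2}.
\item The main term differs from $\nabla_{\Gamma_h^k}\bw=\bPi_h\nabla\bw\bPi_h$ by $O(h^k)|\nabla_{\Gamma_h^k}\bw|$. This uses $|\bPi-\bPi_h|\lesssim h^k$, $|d|\lesssim h^{k+1}$, and $|1-\mu_h|\lesssim h^{k+1}$.
\item The normal component of $\bw$ must be handled separately. We use $\bnu_h\cdot\bw=0$ and $\bnu\cdot\bw=(\bnu-\bnu_h)\cdot\bw=O(h^k)|\bw|$.
\end{itemize}
Symmetrizing the resulting bound gives \eqref{eq:defequiv}.
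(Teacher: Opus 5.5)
The paper does not prove this lemma itself. It states that \eqref{eq:pprops}--\eqref{eq:defequiv} are summarized from \cite[\S 2.3]{DN26}, so there is no in-paper argument to compare against line by line.

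You instead verify everything directly:
\begin{itemize}
\item You write each transform as an explicit matrix field: $\mu_h^{-1}(\bPi-d\bH)$, $(\det\nabla\ba_K^\top\nabla\ba_K)^{-1/2}\nabla\ba_K$ and its Moore--Penrose inverse, and $\calP_{\bPsi}$ factored through $\hat K$.
\item You get the $W_\infty^m$, scaling and norm-equivalence bounds by Leibniz/Fa\`a di Bruno power counting from \eqref{eq:map1}--\eqref{eq:map7}.
\item You reduce \eqref{eq:Ppsiequiv} to $|1-\bar\mu_h|+|\nabla\bPsi-\bar\bPi_h|\lesssim h$.
\item You prove \eqref{eq:defequiv} by differentiating $\mu_h^{-1}(\bPi-d\bH)\bw$ along $K$.
\end{itemize}
This is sound. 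It gives a self-contained proof showing that the $W_\infty^m$ and norm-equivalence statements are pure scaling consequences of the mesh assumptions. It also shows that the $h^k$ in \eqref{eq:defequiv} comes from $|\bPi-\bPi_h|$, $\nabla_{\Gamma_h^k}d$, $\nabla_{\Gamma_h^k}\mu_h$ and $\bnu\cdot\bw$, each of which is $O(h^k)$. The citation, by contrast, buys brevity.

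Three details need tightening if you write it out.
\begin{itemize}
\item You do not treat $\|\calP_{\bPsi^{-1}}\|_{W_\infty^m(K)}\lesssim 1$, which you also use for \eqref{eq:normequiv2}. It follows from $\calP_{\bPsi^{-1}}=(\calP_{F_{\bar K}}(\calP_{\ba_K})^\dagger)\circ\ba_K^{-1}$ together with $|\ba_K^{-1}|_{W_\infty^j(K)}\lesssim h^{-1}$.
\item Since $\bPi-d\bH$ is singular, ``$(\bPi-d\bH)^{-1}$'' must mean the tangential inverse $[\bI-\frac{\bnu\otimes\bnu_h}{\bnu\cdot\bnu_h}](\bI-d\bH)^{-1}\bPi$.
\item The full tangential gradient $\nabla\bw\,\bPi_h$ is not controlled by $|\nabla_{\Gamma_h^k}\bw|=|\bPi_h\nabla\bw\,\bPi_h|$ alone. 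Because $\bw\cdot\bnu_h=0$, its normal part is $\bnu_h^\top\nabla\bw\,\bPi_h=-\bw^\top\nabla_{\Gamma_h^k}\bnu_h=O(|\bw|)$. This becomes $O(h^k|\bw|)$ only through the left factor $\bPi$, using $|\bPi\bnu_h|\lesssim h^k$. Your main-term comparison should therefore read $O(h^k)(|\nabla_{\Gamma_h^k}\bw|+|\bw|)$. That is still exactly the right-hand side of \eqref{eq:defequiv}, so your conclusion is unaffected.
\end{itemize}
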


\subsection{Norms and function spaces}  Discrete norms play a critical role in our development.  For $m \ge 0$ let $H_h^m(\gamma)= \{ \bw \in L_2(\gamma): \bw|_{K^\gamma} \in H^m(K^\gamma) \forall K^\gamma \in \calT_{h,k}^\gamma\}$, and for $\bw \in H_h^m(\Gamma)$ let 
\begin{equation}
\|\bw \|_{H_h^m(\gamma)}^2 = \sum_{K^\gamma \in \calT_{h,k}^\gamma } \|\bw\|_{H^m(K^\gamma)}^2.
\end{equation}
Similar definitions apply to $H_h^m(\bar \Gamma_h)$ and $H_h^m(\Gamma_h^k)$.  From \eqref{eq:normequiv} and \eqref{eq:normequiv2} we obtain
\begin{equation}
\label{eq:hnormequiv}
\|\bw\|_{H_h^m(\Gamma_h^k)} \simeq \|\pt \bw \|_{H_h^m(\gamma)} \simeq \|\bar \bw \|_{H_h^m(\bar \Gamma_h)}
\end{equation}
whenever these quantities are defined.  We also employ standard $H({\rm div})$ spaces given by
\begin{equation}
H({\rm div}, \Gamma_h^k) = \{ \bw \in [L_2(\Gamma_h^k)]^3 \hbox{ s.t. } {\rm div}_{\Gamma_h^k} \bw \in L_2(\Gamma_h^k) \hbox{ and } \bw \cdot \bnu_h =0\}
\end{equation}
and similarly for $H({\rm div}, \bar \Gamma_h)$ and $H({\rm div}, \gamma)$.

Given $e^\gamma \in \calE^\gamma$, denote by $K_1^\gamma, K_2^\gamma\in \calT_{h,k}^\gamma$ the triangles sharing $e^\gamma$ and by $\bt_\gamma$ the unit tangent to $e^\gamma$. Let also $\pt \bw_i = \pt \bw|_{K_i^\gamma}$, $i=1,2$.  Similar definitions apply to $\bar \bw$ and $\bw$.  With $[\bw]=\bw_1-\bw_2$, we then define
\begin{equation}
\label{eq:jumpdef}
\begin{aligned}
[ \pt \bw]_{\gamma} & = [\pt \bw]\cdot \bt_\gamma \bt_\gamma = (\pt \bw_2-\pt \bw_1) \cdot \bt_\gamma \bt_\gamma,
\\  [\bw]_{\Gamma_h^k} & = [\bw]\cdot \bt_h^k \bt_h^k =(\bw_2-\bw_1) \cdot \bt_h^k \bt_h^k,
\\ [\bar \bw]_{\bar \Gamma_h} & = [\bar \bw] \cdot \bt_{\bar e} \bt_{\bar e} =  (\bar \bw_2-\bar \bw_1) \cdot \bt_{\bar e} \bt_{\bar e}.
\end{aligned}
\end{equation}

Given a penalty parameter $\rho \ge C>0$ independent of $h$, we then define for $\bw \in H_h^1(\Gamma_h^k) \cap H({\rm div}_{\Gamma_h^k})$
\begin{equation}
\begin{aligned}
\|\pt \bw\|_{DG,\gamma}^2 & = \| {\rm Def}_{ \gamma} \pt \bw \|_{L_2(\gamma)}^2+ \|\pt \bw \|_{L_2(\gamma)}^2 + \frac{\rho}{h} \sum_{e^\gamma \in \calE^\gamma} \| [ \pt \bw ]_\gamma \|_{L_2(e^\gamma)}^2,
\\ \|\bw \|_{DG, \Gamma_h^k}^2 & = \| {\rm Def}_{ \Gamma_h^k} \bw \|_{L_2(\Gamma_h^k)}^2+ \|\bw \|_{L_2(\Gamma_h^k)}^2 + \frac{\rho}{h} \sum_{e \in \calE} \| [ \bw ]_{\Gamma_h^k}  \|_{L_2(e)}^2,
\\ \|\bar \bw  \|_{DG, \bar \Gamma_h}^2 & = \| {\rm Def}_{ \bar \Gamma_h} \bar \bw \|_{L_2(\bar \Gamma_h)}^2+ \|\bar \bw \|_{L_2(\bar \Gamma_h)}^2 + \frac{\rho}{h} \sum_{\bar e \in \bar \calE} \| [ \bar \bw ]_{\bar \Gamma_h}  \|_{L_2(\bar e)}^2.
\end{aligned}
\end{equation}
Here and below, with slight abuse of notation differential operators are defined elementwise in cases where the argument does not possess the relevant global regularity.  

We finally define the norm
\begin{equation}
\label{eq:triplebardef}
\vvvert \bw \vvvert_{\Gamma_h^k} = \|\bw\|_{DG, \Gamma_h^k} + \|\bw\|_{H_h^1(\Gamma_h^k)} + h \|\bw \|_{H_h^2(\Gamma_h^k)}
\end{equation}
and similarly for $\vvvert \cdot \vvvert_{\gamma}$. We shall show below that for discrete velocity fields $\bw_h$ there holds $\vvvert \bw_h \vvvert_{\Gamma_h^k} \simeq \|\bw\|_{DG, \Gamma_h^k}$ and similarly for the corresponding norms on $\bar \Gamma_h$ and $\gamma$.

\subsection{Finite element spaces}
We employ $BDM_r$ finite element spaces with $r \ge 1$.  Let $\hat K$ be the reference triangle and define $\hat \bV= [\mathbb{P}^r(K)]^2$.  Let then
\begin{equation}
\begin{aligned}
\bar \bV_h & = \{\bar \bw \in H({\rm div}, \bar \Gamma_h): \forall \bar K \in \bar \calT_h \exists \hat \bw \in \hat \bV: \bar \bw|_{\bar K} =\calP_{F_K} \hat \bw \}, 
\\ \bar Q_h & = \{ q \in L_2(\bar \Gamma_h): q|_{\bar K} \in \mathbb{P}^{r-1}(\bar K), \forall \bar K \in \bar \calT_h\}, ~~~Q_h^0 = Q_h \cap L_2^0(\Gamma_h^k),
\end{aligned}
\end{equation}
where $L_2^0(\Gamma_h^k)= \{ u \in L_2(\Gamma_h^k): \int_{\Gamma_h^k} u=0 \}$, and
\begin{equation}
\begin{aligned}
\bV_h & = \{ \calP_{\bPsi_h} \bar \bw: \bar \bw \in \bar \bV_h\} = \{ \bw \in H({\rm div}, {\Gamma_h^k}): \forall K \in \calT_{h,k} \exists \hat \bw \in \hat \bV: \bw = \calP_{a_k} \hat \bw \}, 
\\ Q_h & =  \{ q  \in L_2(\Gamma_h^k): \exists \bar q \in \bar Q_h : q=\bar q \circ\bPsi \}.
\end{aligned}
\end{equation}

\subsection{Bilinear forms}

We first define $a_\gamma :[H_h^1(\gamma)]^3 \cap H({\rm div}; \gamma)  \times [H_h^1(\gamma)]^3 \cap H({\rm div};\gamma) \rightarrow \mathbb{R}$ by 
\begin{equation}
\label{eq:adef}
a_\gamma(\bu,\bw) =  2\int_{\gamma} \mathrm{Def}_{\gamma} \bu 
:\mathrm{Def}_{\gamma}\bw
+ \int_{\gamma} \bu\cdot \bw + j_\gamma(\bu, \bw).  
\end{equation}
where
\begin{equation}
\begin{aligned}
\label{eq:jadef}
 j_{\gamma}(\bu,\bw)&=2 \left[-\int_{\varSigma^\gamma}\{\mathrm{Def}_{\gamma}\bu\}_{\gamma}
 \cdot [\bw]_{\gamma} 
 -
 \int_{\varSigma^\gamma}\{\mathrm{Def}_{\gamma}\bw \}_{\gamma} \cdot[\bu]_{\gamma}\right] 
 + \frac{2\rho}{h}\int_{\varSigma^\gamma }[\bu]_{\gamma}\cdot[\bw]_{\gamma}. 
\end{aligned}
\end{equation}
As above, $\rho>0$ is an $h$-independent penalty parameter which will be taken to be sufficiently large below.  

The corresponding discontinuous Galerkin bilinear form on the discrete surface $\Gamma_h^k$ is given by 
\begin{equation} \label{eq:ahdef}
a_{\Gamma_h^k}(\bu_h,\bw_h)
= 2\int_{\Gamma_h^k}\mathrm{Def}_{\Gamma_h^k}\bu_h
:\mathrm{Def}_{\Gamma_h^k}\bw_h
+
\int_{\Gamma_h^k}\bu_h \cdot \bw_h
+
j_{\Gamma_h^k}(\bu_h,\bw_h),
\end{equation}
where
\begin{equation}
\label{eq:jdef}
\begin{aligned}
 j_{\Gamma_h^k}(\bu_h,\bw_h)&=2 \left[-\int_{\varSigma}\{\mathrm{Def}_{\Gamma_h^k}\bu_h \}_{\Gamma_k^h}
 \cdot [\bw_h]_{\Gamma_h^k} 
 -
 \int_{\varSigma}\{\mathrm{Def}_{\Gamma_h^k}\bw_h\}_{\Gamma_k^h} \cdot[\bu_h]_{\Gamma_h^k}\right] 
 \\
 &+ \frac{2\rho}{h}\int_{\varSigma}[\bu_h]_{\Gamma_h^k}\cdot[\bw_h]_{\Gamma_h^k}. 
\end{aligned}
\end{equation}
The average operators above are defined by
\begin{equation}
\begin{aligned}
\{ {\rm Def}_\gamma \bu  \}_{\gamma}  & = \frac{1}{2}  ( {\rm Def}_\gamma\bu|_{K_1^\gamma} \bn_{\gamma1}-{\rm Def}_\gamma \bu|_{K_2^\gamma} \bn_{\gamma2}), 
\\ \{ {\rm Def}_{\Gamma_h^k} \bu \} _{\Gamma_h^k}& = \frac{1}{2} ({\rm Def}_{\Gamma_h^k}\bu|_{K_1} \bn_{h1}^k-{\rm Def}_{\Gamma_h^k} \bu|_{K_2} \bn_{h2}^k ),
\end{aligned} 
\end{equation}
where $\bn_{hi}^k$  ($i=1,2$) are the outward unit conormals to an edge shared by $K_1,K_2 \in \calT_{h,k}$ and similarly for quantities on $\gamma$.  We recall that $\bn_{\gamma 1} = -\bn_{\gamma 2}$ but $\bn_{h 1}^k \neq -\bn_{h 2}^k$ in general.  

\begin{remark}  The definitions of $j_{\gamma}$ and $j_{\Gamma_h^k}$ in the previous works \cite{BDL20,SurfaceStokes2} on penalized $H({\rm div})$-conforming methods for surfaces Stokes problems differ slightly from those here.  In \cite{BDL20}, the jump term on $\Gamma_h^k$ is defined by $[\bw]=\bw_{K_1}-\bw_{K_2}=(\bw_{K_1}-\bw_{K_2})\cdot \bt_h^k \bt_h^k + (\bw_{K_1} \cdot \bn_{h1}^k \bn_{h1}^k-\bw_{K_2}\cdot \bn_{h2}^k \bn_{h2}^k)$, while in \cite{SurfaceStokes2} the jump term is defined as $(\bw_{K_1}-\bw_{K_2})\cdot \bt_h^k \bt_h^k + (\bw_{K_1} \cdot \bn_{h1}^k +\bw_{K_2}\cdot \bn_{h2}^k) \frac{\bn_{h1}^k+\bn_{h2}^k}{2}$. The former definition is not the same as ours in \eqref{eq:jumpdef} as ours completely omits the conormal jump.  The latter definition corresponds with ours when using $H({\rm div})$ conforming elements as we do here since $\bw_{K_1} \cdot \bn_{h1}^k +\bw_{K_2}\cdot \bn_{h2}^k=0$ for $\bw \in H({\rm div}, \Gamma_h^k)$; the additional term allows for definition of penalty methods with fully discontinuous elements which we do not consider here.  The corresponding definitions on $\gamma$ all overlap in our context because for elementwise sufficiently smooth $\bu \in H({\rm div},\gamma)$ there hold $ \bu_{K_1^\gamma} \cdot \bn_{\gamma 1}+\bu_{K_2^\gamma} \cdot \bn_{\gamma 2}=0$ and $\bn_{\gamma 1}=-\bn_{\gamma 2}$ and thus the conormal components of all three types of jump terms disappear.  Our definition modestly simplifies the analysis and requires similar effort to implement.  Note as well that using integration by parts, all three definitions lead to consistency of the bilinear form $a_\gamma$ in the sense that 
\begin{equation}
    a_\gamma(\bu, \bw) =  \int_\gamma -2\bPi {\rm div_\gamma} {\rm Def}_\gamma \bu \cdot \bw+  \bu \cdot \bw, ~~\bu \in H^2(\gamma) \hbox{ and } \bw \in H({\rm div}, \gamma) \cap H_h^2(\gamma).  
\end{equation}
\end{remark}

Given ${\mathcal{S}} \in \{\gamma, \Gamma_h^k, \bar \Gamma_h\}$, we also define for $(\bu, q) \in H({\rm div}; \mathcal{S}) \times L_2(\mathcal{S})$
\begin{equation}
b_{\mathcal{S}} (\bu, q) = \int_{\mathcal{S}} q {\rm div}_{\mathcal{S}} \bu.
\end{equation}
An important property of Piola transforms is that given $(\bw, q) \in H({\rm div}; \Gamma_h^k) \times L_2(\Gamma_h^k)$, there holds
\begin{equation}
\label{eq:piola_property}
b_{\gamma} ( \pt \bw, q^\ell) = b_{\Gamma_h^k} (\bw, q) = b_{\bar \Gamma_h} (\bar \bw, \bar q).
\end{equation}

\section{Norm equivalences, Korn inequalities, and geometric errors}
\label{sec:geofoundations}

In this section we establish equivalences between norms of jumps on the discrete and continuous surfaces, which lead to similar equivalences for full discontinuous Galerkin norms.  We then prove discrete Korn inequalities.  

\subsection{Equivalence of jumps}

We first prove a lemma establishing equivalence of the sum of the jump and $L_2$ portions of the DG norm on the various surfaces.  

\begin{lemma} \label{lem:jumps}
Let $ \bw \in \bV_h$.  Then for $e \in \calE$ with $e = K_1 \cap K_2$ ($K_1,K_2 \in \calT_{h,k}$),
\begin{equation} \label{eq:jumpequivs}
\begin{aligned}
h^{-1} \|[\bar \bw]_{\bar \Gamma_h}\|_{L_2 (\bar e)}^2  + \|\bar \bw \|_{\bar K_1 \cup \bar K_2}^2 & \simeq h^{-1} \|[\bw]_{\Gamma_h^k}\|_{L_2(e)}^2  + \|\bw \|_{L_2(K_1 \cup K_2)}^2
\\ & \simeq h^{-1} \|[\pt \bw]_{\gamma} \|_{L_2(e^\gamma)}^2 + \|\pt \bw \|_{L_2(K_1^\gamma \cup K_2^\gamma)}^2.
\end{aligned}
\end{equation}
\end{lemma}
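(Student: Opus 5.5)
The idea is to compare the jump of the Piola-transformed fields directly, pointwise along each edge, and absorb the resulting perturbation into the $L_2$ term using a trace/inverse inequality. Since the $L_2$ parts are already equivalent by \eqref{eq:normequiv} and \eqref{eq:normequiv2} (with $m=0$), it suffices to prove a one-sided perturbation bound for the jumps in each direction. We will show, for the comparison $\Gamma_h^k \leftrightarrow \gamma$,
\begin{equation*}
\big|\, \|[\pt \bw]_\gamma\|_{L_2(e^\gamma)} - \|[\bw]_{\Gamma_h^k}\|_{L_2(e)} \,\big| \lesssim h^{k}\, \|\bw\|_{L_2(e_1\cup e_2)},
\end{equation*}
where $e_i$ denotes $e$ viewed as an edge of $K_i$ (traces from the two sides). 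Combined with the discrete trace inequality $\|\bw\|_{L_2(\partial K_i)} \lesssim h^{-1/2}\|\bw\|_{L_2(K_i)}$, valid for $\bw \in \bV_h$ via the reference map, \eqref{eq:piolaprops} and scaling, this gives
\begin{equation*}
h^{-1}\,\|[\pt\bw]_\gamma\|^2_{L_2(e^\gamma)} \lesssim h^{-1}\,\|[\bw]_{\Gamma_h^k}\|^2_{L_2(e)} + h^{2k-2}\,\|\bw\|^2_{L_2(K_1\cup K_2)},
\end{equation*}
and the reverse. Since $k\ge 1$, the last term is bounded by the $L_2$ part, which yields the equivalence.

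For the pointwise comparison we use \eqref{eq:pioladef}. On $e^\gamma$, $[\pt\bw]\cdot\bt_\gamma = \mu_h^{-1}\,\big((\bPi - d\bH)(\bw_2-\bw_1)\big)\cdot\bt_\gamma$, evaluated at the preimage point. Write $\bt_\gamma = \bt_h^k + (\bt_\gamma - \bt_h^k)$ and use the following facts:
\begin{itemize}
\item $|1-\mu_h|\lesssim h^{k+1}$ and $|d|\lesssim h^{k+1}$;
\item $\bPi\bt_\gamma=\bt_\gamma$, so $(\bPi(\bw_2-\bw_1))\cdot\bt_\gamma = (\bw_2-\bw_1)\cdot\bt_\gamma$;
\item $|\bt_\gamma-\bt_h^k|\lesssim h^k$ by \eqref{eq:tc2}.
\end{itemize}
This gives $[\pt\bw]\cdot\bt_\gamma = (\bw_2-\bw_1)\cdot\bt_h^k + O(h^k)\,(|\bw_1|+|\bw_2|)$. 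Transferring the edge integral from $e^\gamma$ to $e$ with \eqref{eq:edgenormeq} and $|1-\mu_e|\lesssim h^{k+1}$ then gives the displayed perturbation bound.

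The comparison $\bar\Gamma_h \leftrightarrow \Gamma_h^k$ is analogous, using $\bw\circ\bPsi = \mu^{-1}\nabla\bPsi\,\bar\bw$. Here \eqref{eq:Ppsiequiv} and \eqref{eq:tc3} give $|\calP_{\bPsi}\bar\bw-\bar\bw|\lesssim h|\bar\bw|$ and $|\bt_{\bar e}-\bt_h^k|\lesssim h$. The jumps then differ by $O(h)\,(|\bar\bw_1|+|\bar\bw_2|)$, and after the trace inequality the perturbation is $h^{-1}\,h^2\,h^{-1}\|\bar\bw\|^2 = \|\bar\bw\|^2_{L_2}$, which is again absorbed.

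The main obstacle is this $\bar\Gamma_h$ comparison, where only $O(h)$ accuracy is available. The absorption works exactly at the critical scaling, so the constant is $O(1)$ rather than small. This explains why the statement sums the jump and $L_2$ terms rather than claiming equivalence of the jumps alone. One must also check that the two one-sided traces of $\bw$ on $e$ correspond consistently, with the same orientation of $\bt$ on both sides.
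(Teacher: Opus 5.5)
Your proposal is correct and follows essentially the paper's argument. The paper also uses a pointwise perturbation bound for the tangential jumps: for $\bar\Gamma_h\leftrightarrow\Gamma_h^k$ via \eqref{eq:Ppsiequiv} and \eqref{eq:tc3} exactly as you do, and for $\Gamma_h^k\leftrightarrow\gamma$ via $|\calP_{\bp}\bw-\bw|\lesssim h^k|\bw|$, which is what your direct computation with \eqref{eq:pioladef} amounts to. It then applies edge-norm equivalence, the scaled trace and inverse inequalities, and $L_2$ norm equivalence. The one small imprecision is that $\mu_h$ differs on $K_1$ and $K_2$ (through $\bnu_{h1}\neq\bnu_{h2}$), so it cannot be factored out of the jump; since $|1-\mu_{hi}|\lesssim h^{k+1}$, this only adds a harmless $O(h^{k+1})(|\bw_1|+|\bw_2|)$ term.
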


\begin{proof} \hspace{.1cm} 
Let $\bw_i = \bw_{K_i}$, and similarly for other quantities such as $\bPsi$.  We first compute that on $e$,
\begin{equation}
\label{eq:ptwisejumps}
\begin{aligned}
| [\bw]_{\Gamma_h^k} | & = | (\calP_{\bPsi_1} \bar \bw_1 - \calP_{\bPsi_2} \bar \bw_2)\cdot \bt_h^k \bt_h^k|
\\ & \le |  ((\calP_{\bPsi_1} -{\bf I}) \bar \bw_1 -(\calP_{\bPsi_2}-{\bf I}) \bar \bw_2) \cdot \bt_h^k \bt_h^k | +| (\bar \bw_1-\bar \bw_2) \cdot \bt_h^k \bt_h^k|
 \\ & \le | ((\calP_{\bPsi_1} -{\bf I}) \bar \bw_1 -(\calP_{\bPsi_2}-{\bf I}) \bar \bw_2)\cdot \bt_h^k \bt_h^k| 
 \\ & ~~~~ + |(\bar \bw_1-\bar \bw_2) \cdot (\bt_h^k-\bt_{\bar e}) \bt_h^k| + |(\bar \bw_1-\bar \bw_2) \cdot \bt_{\bar e} (\bt_h^k-\bt_{\bar e})| +| [\bar \bw]_{\bar \Gamma_h} | 
\\ & \lesssim h (|\bar \bw_1| + |\bar \bw_2|) + | [\bar \bw]_{\bar \Gamma_h}|. 
\end{aligned}
\end{equation}
In the last inequality we have applied \eqref{eq:tc3} and the first inequality in \eqref{eq:Ppsiequiv}.  Taking the norm of both sides of this inequality over $e$, employing norm equivalence \eqref{eq:edgenormeq}, using a standard scaled trace inequality 
\begin{equation}
 \label{eq:scaledtrace} 
\|\bw_i\|_{L_2(e)} \lesssim h^{-1/2} \|\bw_i\|_{L_2(K_i)} + h^{1/2} \|\nabla_{\Gamma_h^k} \bw_i\|_{L_2(K_i)},  
\end{equation}
and an inverse inequality,  we find
\begin{equation}
h^{-1} \|[\bw]_{\Gamma_h^k}\|_{L_2(e)}^2 \lesssim h^{-1} \|[\bar \bw]_{\bar \Gamma_h}\|_{L_2(\bar e)}^2 + \|\bar \bw\|_{L_2(\bar K_1 \cup \bar K_2)}^2.
\end{equation}
The first inequality in \eqref{eq:jumpequivs} then follows from the norm equivalence \eqref{eq:normequiv2}.  The other inequalities follow from similar arguments employing in addition the second inequality in \eqref{eq:Ppsiequiv} and similar estimates $|\calP_{\bp} \bw-\bw|\lesssim h^k |\bw|$, $|\calP_{\bp^{-1}} \pt \bw-\pt \bw| \lesssim h^k |\pt \bw|$ which may be easily established using \eqref{eq:pioladef}.  
\end{proof}

\subsection{Discrete Korn inequality}

We begin by stating a continuous Korn inequality \cite{JankuhnEtal18}.

\begin{lemma}  Given $\bw \in {\bf HT}^1(\gamma)$, there holds
\begin{equation} \label{eq:contkorn}
\|\bw \|_{H^1(\gamma)} \lesssim \|{\rm Def}_\gamma \bw \|_{L_2(\gamma)} + \|\bw \|_{L_2(\gamma)}.
\end{equation}
\end{lemma}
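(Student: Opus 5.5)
The plan is to prove the tangential Korn inequality \eqref{eq:contkorn} by contradiction and compactness, after a pointwise reduction to an extended Euclidean field. The result is cited from \cite{JankuhnEtal18}, but a self-contained argument runs as follows.

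\emph{Reduction to a G\aa rding-type estimate.} For $\bw \in \HT^1(\gamma)$, use the extension $\bw^e$ to $U$ and differentiate the identity $\bw\cdot\bnu=0$ tangentially. This gives $\nabla_\gamma \bw = \bPi \nabla \bw^e \bPi$, together with the relation $\bnu^\top \nabla \bw^e \bPi = -\bw^\top \bH$ for the normal part of the full tangential derivative. Hence the full covariant gradient is controlled by $\nabla_\gamma\bw$ plus zeroth-order terms in $\bw$, and $\|\bw\|_{H^1(\gamma)} \simeq \|\nabla_\gamma \bw\|_{L_2(\gamma)} + \|\bw\|_{L_2(\gamma)}$. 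It therefore suffices to bound $\|\nabla_\gamma \bw\|_{L_2(\gamma)}$.

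\emph{Local estimate.} Cover $\gamma$ by finitely many coordinate patches with a subordinate partition of unity $\{\phi_j\}$. On each patch, pull back $\phi_j\bw$ to a planar domain via a smooth chart. In these coordinates, ${\rm Def}_\gamma$ becomes the planar symmetric gradient of the pulled-back field plus zeroth-order terms, which involve Christoffel symbols and metric derivatives and are bounded by $|\bw|$. Applying the planar Korn inequality for compactly supported fields, $\|\nabla v\|_{L_2} \le \sqrt2\, \|\varepsilon(v)\|_{L_2}$ (obtained by integration by parts), and summing over patches gives the G\aa rding inequality
\begin{equation*}
\|\nabla_\gamma \bw\|_{L_2(\gamma)} \lesssim \|{\rm Def}_\gamma \bw\|_{L_2(\gamma)} + \|\bw\|_{L_2(\gamma)}.
\end{equation*}
The commutator terms from $\nabla\phi_j$ are also lower order. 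Together with the reduction above, this is already \eqref{eq:contkorn}, so no compactness argument is in fact needed.

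\emph{Main obstacle.} The delicate step is the bookkeeping in the chart. One must verify that the metric-dependent symmetrized gradient differs from the flat one only by terms of order $|\bw|$, with constants uniform over the finite atlas; this uses smoothness of $\gamma$. An alternative route avoids charts by working directly in ambient coordinates. The integration-by-parts identity $\int_\gamma |\nabla_\gamma \bw|^2 - \int_\gamma \nabla_\gamma\bw : \nabla_\gamma\bw^\top = \int_\gamma(\text{curvature terms})\,|\bw|^2$ holds for smooth tangential fields and extends by density. Combined with $2|{\rm Def}_\gamma\bw|^2 = |\nabla_\gamma\bw|^2 + \nabla_\gamma\bw:\nabla_\gamma\bw^\top$, it yields $\|\nabla_\gamma\bw\|^2 \le \|{\rm Def}_\gamma\bw\|^2 + C\|\bw\|^2$ directly. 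If I had to pick one route, I would take this Bochner-type identity, with Gauss curvature appearing in the curvature term, and verify it carefully.
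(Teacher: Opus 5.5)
The paper gives no proof of this lemma; it only cites \cite{JankuhnEtal18}. Your chart-based argument is a correct, self-contained substitute. The reduction $\nabla\bw^e\bPi=\nabla_\gamma\bw-\bnu\otimes(\bH\bw)$ on $\gamma$ shows that only $\|\nabla_\gamma\bw\|_{L_2(\gamma)}$ needs to be bounded. Localizing with a partition of unity and applying the flat inequality $\|\nabla v\|_{L_2}\le\sqrt2\,\|\varepsilon(v)\|_{L_2}$ for compactly supported $v$ then gives the estimate directly. No compactness step is needed, because $\|\bw\|_{L_2(\gamma)}$ already sits on the right-hand side.

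The one point to make precise is which components you pull back. If $\boldsymbol{\sigma}$ is a chart and $\tilde w_i=\bw\cdot\partial_i\boldsymbol{\sigma}$ are the covariant components, then exactly
\[
\tfrac12(\partial_i\tilde w_j+\partial_j\tilde w_i)=\partial_i\boldsymbol{\sigma}\cdot({\rm Def}_\gamma\bw)\,\partial_j\boldsymbol{\sigma}+\bw\cdot\partial_{ij}\boldsymbol{\sigma}.
\]
So the discrepancy with the flat symmetric gradient really is of order $|\bw|$. With contravariant components ($\bw=w^k\partial_k\boldsymbol{\sigma}$) the discrepancy contains $(g_{ij}-\delta_{ij})\,\partial w$. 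That term is first order and can only be absorbed by shrinking the patches until the metric is close to the identity.

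The Bochner-type route, which you say you would prefer, is misstated, and the inequality you draw from it is false. Pointwise, $|\nabla_\gamma\bw|^2-\nabla_\gamma\bw:\nabla_\gamma\bw^\top=\frac12|\nabla_\gamma\bw-\nabla_\gamma\bw^\top|^2$ is the squared surface curl, a first-order quantity. Its integral therefore cannot be a curvature term times $|\bw|^2$. The correct identity comes from integrating by parts and commuting covariant derivatives; in two dimensions the Ricci tensor is the Gauss curvature $K$ times the metric. It reads
\[
\int_\gamma\nabla_\gamma\bw:\nabla_\gamma\bw^\top=\int_\gamma({\rm div}_\gamma\bw)^2-\int_\gamma K|\bw|^2 .
\]
Combined with $2|{\rm Def}_\gamma\bw|^2=|\nabla_\gamma\bw|^2+\nabla_\gamma\bw:\nabla_\gamma\bw^\top$, it gives
\[
\|\nabla_\gamma\bw\|_{L_2(\gamma)}^2=2\|{\rm Def}_\gamma\bw\|_{L_2(\gamma)}^2-\|{\rm div}_\gamma\bw\|_{L_2(\gamma)}^2+\int_\gamma K|\bw|^2\le 2\|{\rm Def}_\gamma\bw\|_{L_2(\gamma)}^2+\|K\|_{L_\infty(\gamma)}\|\bw\|_{L_2(\gamma)}^2 .
\]

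The factor $2$ cannot be replaced by $1$. Already in the plane, $\|\nabla v\|^2=2\|\varepsilon(v)\|^2$ for compactly supported divergence-free $v$, which matches the $\sqrt2$ you used in the chart argument. A high-frequency divergence-free field supported in a small patch of $\gamma$ then violates $\|\nabla_\gamma\bw\|^2\le\|{\rm Def}_\gamma\bw\|^2+C\|\bw\|^2$ for every fixed $C$.

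With the corrected identity, this route also proves the lemma. Compared with the chart argument, it gives an explicit constant and shows exactly how curvature enters; for a Killing field it yields $\|\nabla_\gamma\bw\|^2=\int_\gamma K|\bw|^2$. The cost is the Ricci commutation formula and a density argument.
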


Because $\calP_{\bp} \bV_h \not \subset {\bf HT}^1(\gamma)$, \eqref{eq:contkorn} does not generally hold for discrete functions.  We instead have the following analog; cf. \cite{BDL20} for a similar estimate in the case $r=1$.  Note that when $\pt \bw \in {\bf HT}^1(\gamma)$, this result reduces to \eqref{eq:contkorn}.  
\begin{lemma} Given $\bw \in \bV_h$, there holds
\begin{equation} \label{eq:disckorn}
\|\pt \bw\|_{H_h^1(\gamma)} \lesssim \|\pt \bw \|_{DG, \gamma}.
\end{equation}
\end{lemma}
\begin{proof} \hspace{.1cm}
We proceed by constructing a conforming analog $\bW$ to $\pt \bw$.  Let $\bar W_h\subset H^1 (\bar \Gamma_h)$ be the standard degree-$r$ Lagrange space on $\bar \Gamma_h$, and let ${\bf YT}= \{\bPi (\by \circ \bPsi^{-1} \circ \bp^{-1}): \by \in [\bar W_h]^3\} \subset {\bf HT}^1(\gamma)$.  Let also $\calN$ be the set of Lagrange nodes on $\Gamma_h^k$.   Let $\calN= \calN_{int} \cup \calN_{vert}\cup \calN_{edge}$, where $\calN_{vert}$, $\calN_{edge}$, and $\calN_{int}$ are respectively the sets of nodes lying on vertices, element edges, and element interiors.  Let $\bar \Phi_z \in \bar W_h$ be the standard Lagrange basis function satisfying $\bar \Phi_z(\tilde{z})=1$ if $\tilde{z}=z$, and $\bar \Phi_z(\tilde z) = 0 $ if $z \neq \tilde z \in \calN$.  Letting ${\rm Val} (z)$ be the valence of a vertex $z$, we then set 
\begin{equation}
\bW =\bW_1+\bW_2+\bW_3
\end{equation}
where
\begin{equation}
\begin{aligned}
 \bW_1(x) & = \bPi \left ( \sum_{z \in \calN_{int}} \pt \bw(z) \ast \bPhi_z(x) \right ),
\\ \bW_2(x)& = \bPi \left ( \sum_{z \in \calN_{vert} } \left  ( \frac{1}{{\rm Val}( z)} \sum_{K^\gamma  \in \calT_{h,k}^\gamma: z \in K^\gamma} \pt \bw|_{K^\gamma}(z) \right ) \ast \bPhi_z(x) \right ),
\\ \bW_3(x) & = \bPi \left ( \sum_{z \in \calN_{edge}} \left ( \frac{1}{2} \sum_{K^\gamma \in \calT_{h,k}^\gamma: z \in K^\gamma} \pt \bw|_{K^\gamma}(z) \right ) \ast \bPhi_z(x) \right ).
\end{aligned}
\end{equation}
Here $\bPhi_z = [\bar \Phi_z, \bar \Phi_z, \bar \Phi_z]^\top \circ \bPsi^{-1} \circ \bp^{-1}$, and $\ast$ is defined as componentwise multiplication of two vectors:  $\bw \ast \by = [w_1 y_1, w_2 y_2, w_3 y_3]^\top$.  

For $z \in \calN$, let $\omega_z$ denote the patch of elements touching $z$.  By a standard scaling argument there holds $\|\bPhi_z\|_{L_2(\gamma)} \lesssim h$.  Using this relationship, the boundedness of $\calP_{\bp^{-1}}$ and $\calP_{\bPsi^{-1}}$, an inverse estimate, bounded overlap of the patches $\omega_z$, and norm equivalence, we have
\begin{equation}
\label{eq:WL2}
\begin{aligned}
\|\bW\|_{L_2(\gamma)}^2&  \lesssim \sum_{z \in \calN} h^2 \|\pt \bw\|_{L_\infty(\omega_z)} \lesssim \sum_{z \in \calN} h^2 \|\bar \bw\|_{L_\infty(\bPsi^{-1} \circ \bp^{-1} (\omega_z))}^2  
\\ & \lesssim \sum_{z \in \calN} \|\bar \bw\|_{L_2(\omega_z)}^2 \lesssim \|\bar \bw\|_{L_2(\bar \Gamma_h)}^2 \lesssim \|\pt \bw \|_{L_2(\gamma)}^2.
\end{aligned} 
\end{equation}

We next bound $\sum_{K_\gamma \in \calT_{h,k}^\gamma}  \|\nabla_\gamma (\pt \bw-\bW)\|_{L_2(K^\gamma) }^2.$  For a given $K^\gamma\in \calT_{h,k}^\gamma$, let
\begin{equation}
\bW_{K^\gamma} = \bW_{1,K^\gamma} + \bW_{2,K^\gamma}+\bW_{3, K^\gamma}
\end{equation}
where
\begin{align}
\begin{aligned}
\bW_{1, K^\gamma}(x) & = \bPi \sum_{z \in \calN_{int, K^\gamma}} \pt \bw(z) \ast \bPhi_z(x),
\\ \bW_{2, K^\gamma}(x) & = \bPi \sum_{z \in \calN_{vert, K^\gamma}} \pt \bw|_{K^\gamma}(z) \ast \bPhi_z(x),
\\ \bW_{3, K^\gamma}(x) & = \bPi \sum_{z \in \calN_{edge, K^\gamma}} \pt \bw|_{K^\gamma} (z) \ast \bPhi_z(x),
\end{aligned}
\end{align}
where $\calN_{int, K^\gamma}=\{z \in \calN_{int}:z \in K^\gamma\}$ and similarly for $\calN_{vert, K^\gamma}$ and $\calN_{edge, K^\gamma}$.  We then employ the splitting
\begin{equation}
\label{eq: Wsplit}
\|\nabla_\gamma (\pt \bw - \bW)\|_{L_2(K^\gamma)} \le \|\nabla_\gamma(\pt \bw - \bW_{K^\gamma})\|_{L_2(K^\gamma)} + \sum_{i=1}^3 \|\nabla_\gamma( \bW_{i, K^\gamma}-\bW_i)\|_{L_2(K^\gamma)}.
\end{equation}

First note that $\bW_{1,K^\gamma}-\bW_1=0$.  Standard scaling yields $\|\bPhi_z\|_{H^1(K^\gamma)} \lesssim 1$, so for $i=2,3$
\begin{equation}
\sum_{i=2}^3 \|\nabla_\gamma(\bW_{i,K^\gamma}-\bW_i)\|_{L_2(K^\gamma)} \lesssim \sup_{z \in \calN_{vert, K^\gamma} \cup \calN_{edge, K^\gamma}} \sup_{K' \in \calT_{h,k}^\gamma, z \in \overline{K'}} |(\pt \bw_{K^\gamma}-\pt \bw_{K'})(z)|.
\end{equation}
If $K^\gamma$ and $K'$ share an edge, then because $\pt \bw \in H({\rm div}; \gamma)$ there holds $(\pt \bw_{K^\gamma}-\pt \bw_{K'})\cdot \bnu=(\pt \bw_{K^\gamma}-\pt \bw_{K'})\cdot \bn_\gamma=0$ and thus $(\pt \bw_{K^\gamma}-\pt \bw_{K'})=[\pt \bw]=[\pt \bw]_\gamma$.  Thus using the triangle inequality, we have for $z \in \calN_{vert, K^\gamma} \cup \calN_{edge, K^\gamma}$
\begin{equation}
\label{eq314}
|(\pt \bw_{K^\gamma}-\pt \bw_{K'})(z)| \le \sum_{e^\gamma \ni z} |(\pt \bw_{K_{1,e^\gamma}^\gamma}-\pt \bw_{K_{2,e^\gamma}^\gamma})(z)| = \sum_{e^\gamma \ni z} |[\pt \bw|_{e^\gamma}]_\gamma(z)|.
\end{equation}
Here $K_{1,e^\gamma}^\gamma, K_{2,e^\gamma}^\gamma \in \calT_{h,k}^\gamma$ are the two mesh elements sharing $e^\gamma$.  As observed in the proof of Lemma \ref{lem:jumps}, we may employ \eqref{eq:ptwisejumps} with $[\pt \bw]_\gamma$ on the left hand side.  Employing \eqref{eq314}, \eqref{eq:ptwisejumps}, inverse estimates, and \eqref{eq:jumpequivs}, we thus obtain
\begin{align}
\begin{aligned}
|(\pt \bw_{K^\gamma}-\pt \bw_{K'})(z)| & \lesssim h \|\bar \bw\|_{L_\infty( \bar \omega_z)} + \sum_{e^\gamma \ni z} \|[\bar \bw]_{\bar\Gamma_h}\|_{L_\infty(\bar e)}
\\ & \lesssim \|\bar \bw\|_{L_2(\bar \omega_z )} + h^{-1/2} \sum_{ e^\gamma \ni z} \|[ \bar \bw]_{\bar \Gamma_h}\|_{L_2(\bar e)}
\\ & \lesssim \|\pt \bw \|_{L_2(\omega_z^\gamma)} + h^{-1/2} \sum_{e^\gamma \ni z} \|[\pt \bw]_{\gamma}\|_{L_2(e^\gamma)}.
\end{aligned}
\end{align}
Here $\omega_z$ is the element patch about $z$.  Employing finite overlap of element patches, we thus obtain
\begin{equation}
\label{eq:West}
\sum_{K \in \calT_{h,k}} \sum_{i=1}^3 \|\nabla_\gamma(\bW_{i,K^\gamma}-\bW_i)\|_{L_2(K^\gamma)}^2 \lesssim \|\pt \bw \|_{L_2(\gamma)}^2+ h^{-1} \sum_{e \in \calE} \|[\pt \bw]_\gamma\|_{L_2(e^\gamma)}^2.
\end{equation}

Given $z \in \calN$, let $\bar z = \bPsi^{-1} \bp^{-1} (z)$ and $z_k = \bp^{-1} (z)$.  Employing $\|\bPhi\|_{L_2(K^\gamma)} \lesssim h$ and $\|\bPhi\|_{H^1(K^\gamma)} \lesssim 1$, we next compute using inverse estimates, $\|\bar g-\bar g(\bar z)\|_{L_\infty(\bar K)} \lesssim h\|\nabla \bar g\|_{L_\infty(\bar K)}$ when $\bar g \in W_\infty^1(\bar K)$ and $\bar z \in \bar K$, \eqref{eq:pprops} and \eqref{eq:piolaprops}, and norm equivalence that
\begin{align}
\label{eq:vWest}
\begin{aligned}
\|\nabla (\pt \bw & -\bW_{K^\gamma})\|_{L_2(K^\gamma)}
\\ & = \|\nabla [ \bPi (\sum_{z \in \calN_{K^\gamma}} (\calP_{\bp} \calP_{\Psi} \bar \bw(\bar z) \ast \bPhi_z-(\calP_{\bp}\circ \calP_{\bPsi})(\bar z) \bar \bw(\bar z) \ast \bPhi_z )]\|_{L_2(K^\gamma)}
\\& \lesssim \|\nabla \bPi\|_{L_\infty(K^\gamma)}\max_{z \in K^\gamma} \|\calP_{\bp} \calP_\bPsi -(\calP_{\bp} \calP_\bPsi)(\bar z)\|_{L_\infty(K^\gamma)} \|\bar \bw \|_{L_\infty(\bar K)} \|\bPhi_z\|_{L_2(K^\gamma)}
\\ & ~~~~+ \|\bPi\|_{L_\infty(K^\gamma)} \|\nabla (\calP_{\bp} \circ \calP_\bPsi) \nabla (\bPsi^{-1} \bp^{-1})\|_{L_\infty(K^\gamma)}\|\bar \bw \|_{L_\infty(K^\gamma)} \|\bPhi_z\|_{L_2(K^\gamma)}
\\ & ~~~~+  \|\bPi\|_{L_\infty(K^\gamma)}\max_{z \in K^\gamma} \|(\calP_{\bp} \calP_\bPsi)(z)-\calP_{\bp} \calP_\bPsi\|_{L_\infty(K^\gamma)} \|\bar \bw \|_{L_\infty(K^\gamma)} \|\nabla \bPhi_z\|_{L_2(K^\gamma)}
\\ & \lesssim h \|\calP_{\bp} \calP_\bPsi\|_{W_\infty^1(\bar K)}\|\bar \bw\|_{L_\infty(\bar K)} \|\nabla_\gamma (\bPsi^{-1} \bp^{-1})\|_{L_\infty(K^\gamma)}
  \lesssim \|\bar \bw\|_{L_2(\bar K)}
 \lesssim \|\pt \bw\|_{L_2(K^\gamma)}.
\end{aligned}
\end{align}
Here we have also used $|\nabla_\gamma \bp^{-1}| =|(\bI-d\bH)^{-1} [ \bI-\frac{\bnu\otimes \bnu_h}{\bnu\cdot \bnu_h}| \lesssim 1$ and $|\nabla (\bPsi^{-1})| \lesssim 1$; the former follow from the smoothness of $d$ and $\bnu_h$ and the latter follows by combining \eqref{eq:map1} and \eqref{eq:map5}.  

Combining \eqref{eq:WL2}, \eqref{eq: Wsplit}, \eqref{eq:West}, and \eqref{eq:vWest} yields
\begin{equation}
\|\pt \bw-\bW\|_{H_h^1(\gamma)}^2 \lesssim \|\pt \bw \|_{L_2(\gamma)} + h^{-1} \sum_{e \in \calE} \|[ \pt \bw]_\gamma\|_{L_2(e^\gamma)}^2 .
\end{equation}
Combining this result with \eqref{eq:contkorn}, we finally obtain
\begin{align}
\label{eq:kornnorho}
\begin{aligned}
\|\pt \bw\|_{H_h^1(\gamma)}^2 &\lesssim \|\pt \bw-\bW\|_{H_h^1(\gamma)}^2 + \|\bW\|_{H^1(\gamma)}^2
\\ & \lesssim \|\pt \bw \|_{L_2(\gamma)}^2 + h^{-1} \sum_{e \in \calE} \|[ \pt \bw]_\gamma\|_{L_2(e^\gamma)}^2 + \|\bW\|_{L_2(\gamma)}^2 + \|{\rm Def}_\gamma \bW\|_{L_2(\gamma)}^2
\\ & \lesssim  \|\pt \bw \|_{L_2(\gamma)}^2 + h^{-1} \sum_{e \in \calE} \|[ \pt \bw]_\gamma\|_{L_2(e^\gamma)}^2 + \|{\rm Def}_{\gamma} \pt \bw \|_{L_2(\gamma)}^2 + \|\pt \bw-\bW\|_{H_h^1(\gamma)}^2
\\ & \lesssim \|\pt \bw \|_{L_2(\gamma)}^2 + h^{-1} \sum_{e \in \calE} \|[ \pt \bw]_\gamma\|_{L_2(e^\gamma)}^2 + \|{\rm Def}_{\gamma} \pt \bw \|_{L_2(\gamma)}^2. 
\end{aligned}
\end{align}
Employing $\rho \ge C >0$ completes the proof. 
\end{proof}

\begin{corollary}
Given $\bw \in \bV_h$, there holds
\begin{equation}
\label{eq:korndiscsurf}
\|\bw\|_{H_h^1(\Gamma_h^k)} \lesssim \|\bw \|_{DG, \Gamma_h^k}, ~~~~\|\bar \bw \|_{H_h^1(\bar \Gamma_h)} \lesssim \|\bar \bw\|_{DG, \bar \Gamma_h}
\end{equation}
\end{corollary}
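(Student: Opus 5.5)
The plan is to transfer the discrete Korn inequality \eqref{eq:disckorn} from $\gamma$ to $\Gamma_h^k$ and to $\bar\Gamma_h$ using norm equivalences. I start with $\Gamma_h^k$. By \eqref{eq:hnormequiv} with $m=1$, we have $\|\bw\|_{H_h^1(\Gamma_h^k)} \simeq \|\pt\bw\|_{H_h^1(\gamma)}$, and \eqref{eq:disckorn} then gives $\|\bw\|_{H_h^1(\Gamma_h^k)} \lesssim \|\pt\bw\|_{DG,\gamma}$. It therefore suffices to show $\|\pt\bw\|_{DG,\gamma}\lesssim \|\bw\|_{DG,\Gamma_h^k} + \text{(terms absorbable)}$.

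I compare the three pieces of $\|\pt\bw\|_{DG,\gamma}$ with their counterparts on $\Gamma_h^k$.
\begin{itemize}
\item \textbf{Jump and $L_2$ parts.} Summing Lemma~\ref{lem:jumps} over edges, and using finite overlap of the pairs $K_1\cup K_2$, gives
\[
h^{-1}\sum_{e}\|[\pt\bw]_\gamma\|^2_{L_2(e^\gamma)}+\|\pt\bw\|^2_{L_2(\gamma)}\lesssim h^{-1}\sum_e\|[\bw]_{\Gamma_h^k}\|^2_{L_2(e)}+\|\bw\|^2_{L_2(\Gamma_h^k)} .
\]
Since $\rho\ge C>0$ and constants may depend on $\rho$, the right-hand side is bounded by $\|\bw\|^2_{DG,\Gamma_h^k}$.
\item \textbf{Deformation part.} I use \eqref{eq:defequiv} together with $\mu_h\simeq 1$ to get
\[
\|{\rm Def}_\gamma\pt\bw\|_{L_2(\gamma)}\lesssim \|{\rm Def}_{\Gamma_h^k}\bw\|_{L_2(\Gamma_h^k)}+Ch^k\|\bw\|_{H_h^1(\Gamma_h^k)} .
\]
\end{itemize}
Combining these, $\|\bw\|_{H_h^1(\Gamma_h^k)}\le C\|\bw\|_{DG,\Gamma_h^k}+Ch^k\|\bw\|_{H_h^1(\Gamma_h^k)}$. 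For $h$ sufficiently small the last term is absorbed into the left-hand side, which gives the first inequality in \eqref{eq:korndiscsurf}.

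For $\bar\Gamma_h$ I repeat the argument with $\Gamma_h^k$ replaced by $\bar\Gamma_h$. Again \eqref{eq:hnormequiv} gives $\|\bar\bw\|_{H_h^1(\bar\Gamma_h)}\simeq\|\pt\bw\|_{H_h^1(\gamma)}$, and Lemma~\ref{lem:jumps} controls the jump and $L_2$ parts by $\|\bar\bw\|_{DG,\bar\Gamma_h}$.

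The deformation comparison is the main obstacle. Here the geometric discrepancy between $\bar\Gamma_h$ and $\gamma$ (or $\Gamma_h^k$) is only $O(h)$, so I need the analog of \eqref{eq:defequiv} for the composite map $\bp\circ\bPsi$ with Piola transform $\calP_{\bp}\calP_{\bPsi}$:
\[
|{\rm Def}_\gamma\pt\bw-({\rm Def}_{\bar\Gamma_h}\bar\bw)\circ(\bp\circ\bPsi)^{-1}|\lesssim h\bigl(|\nabla_{\bar\Gamma_h}\bar\bw|+|\bar\bw|\bigr) .
\]
I would prove this bound directly from \eqref{eq:pioladef} and \eqref{eq:piolaprops}. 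The chain rule produces two kinds of terms:
\begin{itemize}
\item terms where the derivative falls on the Piola matrix, bounded by $\|\calP\|_{W^1_\infty}|\bar\bw|\lesssim|\bar\bw|$;
\item terms from the deviation of $\calP_{\bp}\calP_{\bPsi}$ and $\nabla(\bp\circ\bPsi)$ from the tangential projection, which are $O(h)$ by \eqref{eq:tc3}, \eqref{eq:map2} and \eqref{eq:geoest} with $k=1$.
\end{itemize}
This only yields an $O(1)\|\bar\bw\|_{L_2}$ contribution rather than an $O(h)$ one, but that is harmless: the $L_2$ term is already part of the DG norm. The result is
\[
\|\bar\bw\|_{H_h^1}\le C\|\bar\bw\|_{DG,\bar\Gamma_h}+Ch\|\bar\bw\|_{H_h^1} ,
\]
and absorbing the last term for small $h$ completes the proof.
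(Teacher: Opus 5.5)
Your argument is correct. For the first inequality it coincides with the paper's proof. The same chain \eqref{eq:hnormequiv}, \eqref{eq:disckorn}, Lemma~\ref{lem:jumps}, \eqref{eq:defequiv} gives $\|\bw\|_{H_h^1(\Gamma_h^k)}\lesssim\|\bw\|_{DG,\Gamma_h^k}+h^k\|\bw\|_{H_h^1(\Gamma_h^k)}$. The only difference is in how the last term is removed: the paper uses an inverse estimate, $h^k\|\bar\bw\|_{H_h^1(\bar\Gamma_h)}\lesssim\|\bar\bw\|_{L_2(\bar\Gamma_h)}$, while you absorb it for small $h$.

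For the second inequality your route is genuinely different. The paper reruns the same argument ``with $k=1$''. It regards $\bar\Gamma_h$ itself as the discrete surface and lifts $\bar\bw$ to $\gamma$ by the Piola transform of $\bp|_{\bar\Gamma_h}$, which is a different field from $\pt\bw$. Then \eqref{eq:disckorn}, Lemma~\ref{lem:jumps} and \eqref{eq:defequiv} apply verbatim, the last now with an $O(h)$ factor.

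You instead keep the single field $\pt\bw=\calP_{\bp}\calP_{\bPsi}\bar\bw$. This lets you use \eqref{eq:hnormequiv} and Lemma~\ref{lem:jumps} exactly as stated, but you must prove a new deformation comparison for the composite map $\bp\circ\bPsi$. The paper's version is shorter because it recycles lemmas already proved for general $k$; yours avoids switching lifted fields but needs that extra, only sketched, pointwise computation.

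Your sketch is sound in what it actually delivers:
\begin{itemize}
\item The gradient term carries an $O(h)$ coefficient, since $|\nabla\bPsi-\bar\bPi_h|$, $|1-\bar\mu_h|$ and $|\bPi-\bar\bPi_h|$ are all $O(h)$ (cf.\ the proof of \eqref{eq:tc3}).
\item The term where the derivative falls on the Piola matrix is $O(1)|\bar\bw|$, by \eqref{eq:pprops} and \eqref{eq:piolaprops}.
\end{itemize}

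One inconsistency needs fixing. Your pointwise display claims the bound $h(|\nabla_{\bar\Gamma_h}\bar\bw|+|\bar\bw|)$. Your chain-rule argument only yields $h|\nabla_{\bar\Gamma_h}\bar\bw|+|\bar\bw|$, as you note yourself a line later. The stronger form would require showing that the curvature terms from differentiating the Piola matrix become $O(h)$ after tangential projection, which you do not do. State the display in the weaker form; since that is all you use, the conclusion is unaffected.

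Also, with the inverse estimate $h\|\nabla\bar\bw\|_{L_2(\bar K)}\lesssim\|\bar\bw\|_{L_2(\bar K)}$ the whole discrepancy is $\lesssim\|\bar\bw\|_{L_2(\bar\Gamma_h)}$. It is then controlled directly by the $L_2$ part of $\|\bar\bw\|_{DG,\bar\Gamma_h}$, so no absorption step is needed, mirroring the paper's closing step.
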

\begin{proof} \hspace{.1cm}
Employing norm equivalence, \eqref{eq:disckorn}, \eqref{eq:jumpequivs}, \eqref{eq:defequiv} and norm equivalence, an inverse inequality and norm equivalence, and finally norm equivalence again yields
\begin{equation}
\begin{aligned}
\|\bw\|_{H_h^1(\Gamma_h^k)}  & \lesssim \|\pt \bw \|_{H_h^1(\gamma)} \lesssim \|\pt \bw\|_{DG, \gamma} \lesssim \|\bw\|_{DG, \Gamma_h^k} + \|{\rm Def}_{\gamma} \pt \bw \|_{L_2(\gamma)}
\\ & \lesssim \|\bw\|_{DG, \Gamma_h^k} + h^k \|\bw\|_{H_h^1(\Gamma_h^k)}
 \lesssim \|\bw\|_{DG, \Gamma_h^k} + h^k \|\bar \bw\|_{H_h^1(\bar \Gamma_h)}
\\ & \lesssim \|\bw \|_{DG, \Gamma_h^k} + \|\bar \bw \|_{L_2(\bar \Gamma_h)}
 \lesssim \|\bw \|_{DG, \Gamma_h^k}.
\end{aligned}
\end{equation}
This is the first inequality in \eqref{eq:korndiscsurf}.  The second follows by the same argument after momentarily setting $k=1$. 
\end{proof}

Combining these results, we obtain equivalence of the DG {and $\vvvert\cdot \vvvert$} norms on the base discrete, discrete, and continuous surfaces.
\begin{lemma}
Given $\bw \in \bV_h$, there holds
\begin{equation}
\label{eq:dgequiv}
\|\bw\|_{DG, \Gamma_h^k} \simeq \|\pt \bw \|_{DG,\gamma} \simeq \|\bar \bw \|_{DG, \bar \Gamma_h}
\end{equation}
and
\begin{equation}
\label{eq:triplebarequiv}
\|\bw \|_{DG, \Gamma_h^k} \simeq \vvvert \bw \vvvert_{\Gamma_h^k}, \hspace{.2cm} \|\pt \bw \|_{DG, \gamma} \simeq \vvvert \pt \bw \vvvert_\gamma, \hspace{.2cm} \|\bar \bw \|_{\bar \Gamma_h} \simeq \vvvert \bar \bw \vvvert_{\bar \Gamma_h}.
\end{equation}
\end{lemma}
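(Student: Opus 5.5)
We prove \eqref{eq:dgequiv} by treating the three pieces of the DG norm separately: the jump-plus-$L_2$ part, and the ${\rm Def}$ part. Summing Lemma \ref{lem:jumps} over all edges $e\in\calE$, and using that each element touches only three edges, gives
\[
\frac{1}{h}\sum_{e}\|[\bw]_{\Gamma_h^k}\|_{L_2(e)}^2+\|\bw\|_{L_2(\Gamma_h^k)}^2 \simeq \frac{1}{h}\sum_{e^\gamma}\|[\pt\bw]_{\gamma}\|_{L_2(e^\gamma)}^2+\|\pt\bw\|_{L_2(\gamma)}^2,
\]
and likewise on $\bar\Gamma_h$. Since $\rho\ge C$ is fixed, the factor $\rho$ only changes constants.

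For the deformation part we use \eqref{eq:defequiv} together with the change of measure bound $|1-\mu_h|\lesssim h^{k+1}$ from \eqref{eq:tangent_conormal}. This gives
\[
\|{\rm Def}_\gamma\pt\bw\|_{L_2(\gamma)}\le C\|{\rm Def}_{\Gamma_h^k}\bw\|_{L_2(\Gamma_h^k)}+Ch^k\|\bw\|_{H_h^1(\Gamma_h^k)}.
\]
By the discrete Korn inequality \eqref{eq:korndiscsurf}, the last term is at most $Ch^k\|\bw\|_{DG,\Gamma_h^k}$. Hence $\|\pt\bw\|_{DG,\gamma}\lesssim\|\bw\|_{DG,\Gamma_h^k}$.

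The reverse bound uses the same pointwise estimate with the roles swapped. Here the remainder $h^k\|\pt\bw\|_{H_h^1(\gamma)}$ is absorbed using \eqref{eq:disckorn} instead. Between $\Gamma_h^k$ and $\bar\Gamma_h$ the deformation tensors differ by $O(h)$, not $O(h^k)$, by \eqref{eq:tc3} and \eqref{eq:Ppsiequiv}. The corresponding comparison, obtained as in \eqref{eq:defequiv} with $k=1$, is
\[
|{\rm Def}_{\Gamma_h^k}\bw-{\rm Def}_{\bar\Gamma_h}\bar\bw\circ\bPsi^{-1}|\lesssim h\bigl(|\nabla_{\Gamma_h^k}\bw|+|\bw|\bigr)
\]
(up to norm equivalence). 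Multiplied by $h$, the gradient remainder is bounded by $\|\bar\bw\|_{L_2}$ via an inverse estimate and \eqref{eq:hnormequiv}. Alternatively it is absorbed using \eqref{eq:korndiscsurf}, exactly as in the corollary's proof.

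The main obstacle is establishing this $O(h)$ comparison of ${\rm Def}$ between $\Gamma_h^k$ and $\bar\Gamma_h$, since it involves derivatives of $\calP_{\bPsi}$. We bound these by \eqref{eq:piolaprops}, which gives $\|\calP_{\bPsi}\|_{W^1_\infty}\lesssim1$, and by $|\calP_{\bPsi}-\bI|\lesssim h$ on tangential fields. The resulting lower order terms $|\nabla\calP_{\bPsi}||\bar\bw|\lesssim|\bar\bw|$ are already part of the $L_2$ component, so no smallness is needed for them.

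For \eqref{eq:triplebarequiv}, the bound $\|\bw\|_{DG}\le\vvvert\bw\vvvert$ is trivial. For the converse, \eqref{eq:korndiscsurf} bounds $\|\bw\|_{H_h^1(\Gamma_h^k)}$ by $\|\bw\|_{DG,\Gamma_h^k}$. The remaining term satisfies
\[
h\|\bw\|_{H_h^2(\Gamma_h^k)}\lesssim\|\bw\|_{H_h^1(\Gamma_h^k)}
\]
by an elementwise inverse estimate, valid because $\bw|_K=\calP_{\ba_K}\hat\bw$ with $\hat\bw$ polynomial and the scaling bounds after \eqref{eq:piolaprops} apply. The same argument works on $\gamma$ using \eqref{eq:disckorn} and \eqref{eq:normequiv}, and on $\bar\Gamma_h$ using the second inequality in \eqref{eq:korndiscsurf}.
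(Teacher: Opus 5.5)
Your argument is correct. Your treatment of the jump/$L_2$ terms and of \eqref{eq:triplebarequiv} coincides with the paper's, but for the ${\rm Def}$ terms you take a more laborious route. You compare deformation tensors across surfaces pointwise: via \eqref{eq:defequiv} between $\gamma$ and $\Gamma_h^k$, and via a new $O(h)$ comparison between $\Gamma_h^k$ and $\bar\Gamma_h$. You then absorb the remainders with Korn or inverse estimates.

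The paper makes no such comparison. It only uses that ${\rm Def}$ is bounded by the full elementwise gradient. For instance, $\|{\rm Def}_{\bar\Gamma_h}\bar\bw\|_{L_2(\bar\Gamma_h)} \le \|\bar\bw\|_{H_h^1(\bar\Gamma_h)} \simeq \|\bw\|_{H_h^1(\Gamma_h^k)} \lesssim \|\bw\|_{DG,\Gamma_h^k}$ by \eqref{eq:hnormequiv} and \eqref{eq:korndiscsurf}. The other directions are symmetric, using \eqref{eq:disckorn} on $\gamma$.

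Because discrete Korn inequalities hold on all three surfaces, a remainder $C(|\nabla\bw|+|\bw|)$ with no small factor is already harmless. So the ``main obstacle'' you single out disappears. Your own observation that the $O(1)\,|\bar\bw|$ lower-order terms need no smallness applies equally to the gradient term.

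Relatedly, your displayed bound $h(|\nabla_{\Gamma_h^k}\bw|+|\bw|)$ claims more than \eqref{eq:piolaprops} delivers, since that only gives $\nabla\calP_{\bPsi}=O(1)$. What you actually use afterwards is the weaker $h|\nabla\bw|+|\bw|$, which is fine.

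What your route buys is finer information between $\gamma$ and $\Gamma_h^k$: the ${\rm Def}$ contributions to the two DG norms differ only by $O(h^k)\|\bw\|_{DG,\Gamma_h^k}$. The paper's route is shorter, treats the three surfaces symmetrically, and needs no new pointwise estimate involving derivatives of $\calP_{\bPsi}$.
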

\begin{proof} \hspace{.1cm}
To prove \eqref{eq:dgequiv}, note that equivalence of the $L_2$ and jump terms follows from \eqref{eq:jumpequivs} and $\rho \ge C>0$.  In order to bound the deformation gradient term on each surface by the DG norm on the other surfaces we employ \eqref{eq:hnormequiv} with $m=1$ and the discrete Korn inequalities \eqref{eq:disckorn} and \eqref{eq:korndiscsurf}.  In order to prove \eqref{eq:triplebarequiv}, recall from \eqref{eq:triplebardef} that the $DG$ norms on each surface are trivially bounded by the corresponding $\vvvert \cdot \vvvert$ norms.  To prove the reverse inequality, note that by an inverse inequality and discrete Korn inequality $\|\bw\|_{H_h^1(\Gamma_h^k)} + h \|\bw\|_{H_h^2(\Gamma_h^k)} \lesssim \|\bw\|_{H_h^1(\Gamma_h^k)} \lesssim \|\bw\|_{DG,\Gamma_h^k}$, and similarly on $\bar \Gamma_h$ and $\gamma$.  Inserting this relationship into \eqref{eq:triplebardef} completes the proof.
\end{proof}


\subsection{The BDM interpolant} Let $\bar I_h: H({\rm div}, \bar \Gamma_h) \cap [H_h^1(\bar \Gamma_h)]^3 \rightarrow \bar \bw_h$ be the standard BDM interpolant on the affine discrete surface $\bar \Gamma_h$; the elementwise definition is exactly as in the planar case \cite{BF91}.  Given $\bw \in H({\rm div}, \Gamma_h^k) \cap [H_h^1(\Gamma_h^k)]^3$, we then define $I_h \bw = \calP_{\bPsi}  \bar I_h \bar \bw$.  Let in addition $\pi_h^1$ be the $L_2(\bar\Gamma_h)$ projection onto $\bar Q_h$ and $\pi_h^k$ the $L_2(\Gamma_h^k)$ projection onto $Q_h$.   $\bar I_h$ then satisfies the standard commuting diagram property 
\begin{equation}
\label{eq:cdp}
{\rm div}_{\bar \Gamma_h} I_h \bar \bw = \pi_h^1 {\rm div}_{\bar \Gamma_h} \bar \bw,
\end{equation}
but the corresponding commuting diagram property does {\it not} hold for $I_h$ and $\pi_h^k$.  Instead there holds the ``weak commuting diagram property'' \cite[Proposition 2]{De26} 
\begin{equation}
\begin{aligned}
b_{\Gamma_h^k} ( {\rm div}_{\Gamma_h^k} I_h \bw, q) & = b_{\Gamma_h^k}  ({\rm div}_{\Gamma_h^k} \bw, (\pi_h^1 \bar q ) \circ \bPsi^{-1}), 
\\ & q \in L_2(\Gamma_h^k), ~\bw \in H({\rm div}, \Gamma_h^k)\cap [H_h^1(\Gamma_h^k)]^3.
\end{aligned}
\end{equation}
We shall apply this property in a slightly weaker form.  Namely, since  $(\pi_h^1 \bar q_h ) \circ \bPsi^{-1}=q_h$ for $q_h \in Q_h$, there holds
\begin{equation}
\label{eq:weakcdp}
b_{\Gamma_h^k}( {\rm div}_{\Gamma_h^k} I_h \bw , q_h) = b_{\Gamma_h^k} ({\rm div}_{\Gamma_h^k} \bw, q_h), ~~~q_h \in Q_h, ~~\bw \in H({\rm div}, \Gamma_h^k) \cap [H_h^1(\Gamma_h^k)]^3.
\end{equation}
We also need stability and interpolation properties for $I_h$, which we summarize in the following lemma.  

\begin{lemma}
If for some $1 \le m \le r+1$ there holds $\bw \in H({\rm div}, \Gamma_h^k) \cap [H_h^m(\Gamma_h^k)]^3$, then for $0 \le \ell \le m$ 
\begin{equation}
\label{eq:vecinterp}
\|\bw - I_h \bw\|_{H_h^\ell(\Gamma_h^k)} \lesssim h^{m-\ell}\|\bw\|_{H_h^m(\Gamma_h^k)}
\end{equation}
Also, if $\bw \in H({\rm div}, \Gamma_h^k) \cap [H_h^m(\Gamma_h^k)]^3$ with $2 \le m \le r+1$ then
\begin{equation}
\label{eq:dginterp}
\begin{aligned}
\vvvert \bw-I_h \bw\vvvert_{\Gamma_h^k} & = \|\bw-I_h \bw\|_{DG, \Gamma_h^k} + \|\bw-I_h \bw \|_{H_h^1(\Gamma_h^k)} + h \|\bw -I_h \bw\|_{H_h^2(\Gamma_h^k)} 
\\ & \lesssim h^{m-1} \|\bw \|_{H_h^m(\Gamma_h^k)}.
\end{aligned}
\end{equation}
In addition, if for $0 \le m \le r$ there holds $q \in H_h^m(\Gamma_h^k)$, then for $0 \le \ell \le m$
\begin{equation}
\label{eq:l2interp}
\|q-\pi_h^k q \|_{H_h^\ell(\Gamma_h^k)} \lesssim h^{m-\ell} \|q\|_{H_h^m(\Gamma_h^k)}.
\end{equation}
If $\bw \in {\bf HT}^1(\gamma)$, then
\begin{equation}
\label{eq:interp_stability}
\|\pt{I_h \ipt{\bw}}\|_{DG,\gamma} \lesssim \|\bw\|_{H^1(\gamma)}.
\end{equation}
If $\bw \in H({\rm div}, \Gamma_h^k) \cap H_h^2(\Gamma_h^k)$, then
\begin{equation}
\label{eq:interp_stability2}
\vvvert I_h \bw\vvvert_{\Gamma_h^k} \lesssim \|\bw\|_{DG, \Gamma_h^k} + h\|\bw \|_{H_h^2(\Gamma_h^k)}.
\end{equation}
\end{lemma}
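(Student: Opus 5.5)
All five estimates will be obtained by pulling back to the affine surface $\bar \Gamma_h$. There $\bar I_h$ is the standard BDM interpolant. We then use \eqref{eq:normequiv2}, \eqref{eq:hnormequiv} and \eqref{eq:piolaprops} to transfer the bounds to $\Gamma_h^k$ and $\gamma$.

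\emph{Estimate \eqref{eq:vecinterp}.} By definition, $\bw - I_h\bw = \calP_{\bPsi}(\bar\bw - \bar I_h\bar\bw)$. Elementwise, \eqref{eq:normequiv2} gives $\|\bw-I_h\bw\|_{H^\ell(K)}\simeq \|\bar\bw-\bar I_h\bar\bw\|_{H^\ell(\bar K)}$. We then map $\bar K$ to $\hat K$ by the affine Piola map $\calP_{F_{\bar K}}$. The reference interpolant preserves $[\mathbb{P}^r]^2$, so the Bramble--Hilbert lemma and scaling give $\|\bar\bw-\bar I_h\bar\bw\|_{H^\ell(\bar K)}\lesssim h^{m-\ell}|\bar\bw|_{H^m(\bar K)}$. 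Finally \eqref{eq:normequiv2} gives $\|\bar\bw\|_{H^m(\bar K)}\lesssim\|\bw\|_{H^m(K)}$. Summing over elements gives \eqref{eq:vecinterp}.

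\emph{Estimate \eqref{eq:l2interp}.} Since $Q_h=\{\bar q\circ\bPsi^{-1}\}$, the projection $\pi_h^k$ is elementwise the $L_2$ projection onto pulled-back polynomials. We compare it with $(\pi^1_h \bar q)\circ\bPsi^{-1}$, which is optimal by standard scaling together with $\|\bar q\|_{H^m(\bar K)}\simeq\|q\|_{H^m(K)}$; the latter holds because $\bPsi$ has $W^m_\infty$-bounded derivatives and the weight $\bar\mu_h\simeq 1$ is smooth. For $\ell=0$, optimality of $\pi_h^k$ in $L_2$ suffices. For $\ell\ge1$ we add an inverse estimate on $\pi_h^k q - (\pi^1_h\bar q)\circ\bPsi^{-1}$ and then use the $\ell=0$ bound.

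\emph{Estimate \eqref{eq:dginterp}.} The $H_h^1$ and $hH_h^2$ parts of $\vvvert\cdot\vvvert$ follow directly from \eqref{eq:vecinterp}, and so do the Def and $L_2$ parts of the DG norm. For the jump term, note that $\bw\in H_h^m$ with $m\ge2$ is not assumed continuous across edges. The jump is linear, so we bound $h^{-1/2}\|[\bw-I_h\bw]_{\Gamma_h^k}\|_{L_2(e)}$ by the sum over $i=1,2$ of $h^{-1/2}\|(\bw-I_h\bw)_i\|_{L_2(e)}$. Applying the scaled trace inequality \eqref{eq:scaledtrace} to each term gives $h^{-1}\|\bw-I_h\bw\|_{L_2(K_i)}+\|\bw-I_h\bw\|_{H^1(K_i)}\lesssim h^{m-1}\|\bw\|_{H^m}$.

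\emph{Stability estimate \eqref{eq:interp_stability2}.} Write $I_h\bw=\bw-(\bw-I_h\bw)$. The triangle inequality leaves $\vvvert\bw\vvvert$, whose $H_h^1$ part is not obviously controlled by $\|\bw\|_{DG}$ for nondiscrete $\bw$. So we proceed differently. Since $I_h\bw\in\bV_h$, \eqref{eq:triplebarequiv} gives $\vvvert I_h\bw\vvvert\simeq\|I_h\bw\|_{DG}\le\|\bw\|_{DG}+\|\bw-I_h\bw\|_{DG}$. It remains to show $\|\bw-I_h\bw\|_{DG}\lesssim h\|\bw\|_{H_h^2}$. This follows from the $m=2$ case of the previous step for the Def, $L_2$ and jump terms.

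\emph{Estimate \eqref{eq:interp_stability}.} Here $\bw\in\HT^1(\gamma)$ has only $H^1$ regularity, and this is the main obstacle. We use \eqref{eq:dgequiv} to work with $\|I_h\ipt\bw\|_{DG,\Gamma_h^k}$. The Def and $L_2$ parts are bounded using $H^1$-stability of $\bar I_h$ (the case $m=\ell=1$ of \eqref{eq:vecinterp}) and \eqref{eq:hnormequiv}. For the jumps we cannot use the trace of the $H^2$ error. Instead, $\pt\ipt\bw=\bw$ is continuous, so $[\bw]_\gamma=0$. Hence $[\pt I_h\ipt\bw]_\gamma=[\pt I_h\ipt\bw-\bw]_\gamma$. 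Applying the trace inequality \eqref{eq:scaledtrace} to this elementwise $H^1$ function, together with $\|\ipt\bw-I_h\ipt\bw\|_{L_2(K)}\lesssim h\|\ipt\bw\|_{H^1(K)}$ and $H^1$-stability, gives $h^{-1/2}\|[\cdot]_\gamma\|_{L_2(e^\gamma)}\lesssim\|\ipt\bw\|_{H^1(K_1\cup K_2)}$. Summing and using $\|\ipt\bw\|_{H_h^1(\Gamma_h^k)}\lesssim\|\bw\|_{H^1(\gamma)}$ from \eqref{eq:pprops} finishes the proof. The delicate point is the bound $\|\ipt\bw\|_{H^1}\lesssim\|\bw\|_{H^1}$ elementwise, which holds because $\calP_{\bp^{-1}}$ is $W^1_\infty$-bounded on each $K^\gamma$.
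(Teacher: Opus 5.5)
Your proposal is correct and follows essentially the same route as the paper. You pull back to $\bar\Gamma_h$ and use norm equivalence for \eqref{eq:vecinterp} and \eqref{eq:l2interp}, use the scaled trace inequality for the jump terms in \eqref{eq:dginterp}, use the identity $[\pt{I_h \ipt{\bw}}]_\gamma=[\pt{I_h \ipt{\bw}}-\bw]_\gamma$ for \eqref{eq:interp_stability}, and use \eqref{eq:triplebarequiv} plus the $m=2$ case of \eqref{eq:dginterp} for \eqref{eq:interp_stability2}. You add details the paper calls standard, namely Bramble--Hilbert on the reference element and the comparison of $\pi_h^k q$ with $(\pi_h^1 \bar q)\circ\bPsi^{-1}$.
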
 
\begin{proof} \hspace{.1cm}
\eqref{eq:vecinterp} is a standard property of $I_h$ on Euclidean domains (cf. \cite[Proposition 3.6]{BF91}).  It holds elementwise and can be extended to curved triangles using the definition of $I_h$ and norm equivalence.  \eqref{eq:l2interp} is also standard.  To prove \eqref{eq:dginterp} we employ \eqref{eq:vecinterp} for the volume terms on the left hand side.  For the jump terms we employ the scaled elementwise trace inequality \eqref{eq:scaledtrace}, norm equivalence, and \eqref{eq:vecinterp} to compute
\begin{equation}
\label{eq:interpstabilityproof}
\begin{aligned}
 \frac{1}{h} \sum_{e \in \calE} \|[\bw- I_h \bw ]_{\Gamma_h^k}\|_{L_2(e)}^2 & \lesssim h^{-2} \|\bw-I_h \bw\|_{L_2(\Gamma_h^k)}^2 + \|\bw-I_h \bw\|_{H_h^1(\Gamma_h^k)}^2 
 \\ & \lesssim h^{2(m-1)} \|\bw\|_{H_h^m(\Gamma_h^k)}^2.
\end{aligned}
\end{equation}
\eqref{eq:interp_stability} is proved by a similar calculation after noting that for $\bw \in {\bf HT}^1(\gamma)$, $[ \pt{I_h \ipt{\bw}}]_\gamma = [\pt{I_h \ipt{\bw}}-\bw]_\gamma$.  

To prove \eqref{eq:interp_stability2}, we apply \eqref{eq:triplebarequiv}, add and subtract $\bw$ inside the resulting $DG$ norm, employ the triangle inequality, and use \eqref{eq:dginterp} with $m=2$.  
\end{proof} 

%
%

\subsection{Geometric Error Estimate} 
A central step in our error analysis is control of ``geometric errors'' (variational crimes) due to formulating the finite element method on the discrete surface $\Gamma_h^k$ instead of on $\gamma$.  Before stating and proving our main result in this regard we state two technical lemmas.
\begin{lemma}
Given 
$\bw_h\in \bV_h$ we have
\begin{align}\label{inverse_ineq_contin}
h^2
||\mathrm{D}_{\gamma}\mathrm{Def}_{\gamma}\pt \bw_h||^2_{L^2(K^{\gamma})}
&\lesssim
||\pt \bw_h||^2_{H^1_h(K^{\gamma})}, \\
\label{eq:discinverse}
h^2
||\mathrm{D}_{\Gamma_h^k}\mathrm{Def}_{\Gamma_h^k}\bw_h||^2_{L^2(K)}
&\lesssim
||\bw_h||^2_{H^1_h(K)}.
\end{align}
\end{lemma}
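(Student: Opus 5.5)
This is an elementwise inverse estimate, so I will pull back to the reference element $\hat K$, use finite dimensionality there, and map forward again. The key point is that second derivatives of the smooth geometric maps ($\ba_K$, $\bp$, $\bPi$, the Piola matrices) are $O(1)$ after scaling. They therefore contribute only lower-order terms, which are controlled by $\|\cdot\|_{H^1}$ and cost no negative power of $h$ beyond the one absorbed by $h^2$.

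I will start with \eqref{eq:discinverse} on $K=\ba_K(\hat K)$. First I write $\bw_h|_K=\calP_{\ba_K}\hat\bw$ with $\hat\bw\in\hat\bV=[\mathbb{P}^r]^2$. Expanding ${\rm D}_{\Gamma_h^k}{\rm Def}_{\Gamma_h^k}\bw_h$ by the product rule in terms of derivatives of $\bPi_h$ (whose $W^m_\infty(K)$ norms are $\lesssim 1$ by \eqref{eq:map5}), I get pointwise
\[
|{\rm D}_{\Gamma_h^k}{\rm Def}_{\Gamma_h^k}\bw_h|\lesssim |D^2_{\Gamma_h^k}\bw_h|+|\nabla_{\Gamma_h^k}\bw_h|.
\]
It therefore suffices to show $h\|\bw_h\|_{H^2(K)}\lesssim\|\bw_h\|_{H^1(K)}$.

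For that bound I will use the scaling in \eqref{eq:piolaprops}. It gives $|\bw_h|_{H^2(K)}\lesssim\sum_{\ell\le2}h^{-\ell}|\hat\bw|_{H^\ell(\hat K)}$ (including the Jacobian factor $h^{-1}$, consistently as in the paper). On the finite-dimensional space $\hat\bV$, a standard inverse inequality gives $|\hat\bw|_{H^2(\hat K)}\lesssim\|\hat\bw\|_{H^1(\hat K)}$. Combining this with $|\hat\bw|_{H^\ell(\hat K)}\lesssim h^\ell\|\bw_h\|_{H^\ell(K)}$ for $\ell=0,1$ yields $h^2|\bw_h|_{H^2(K)}\lesssim h\|\bw_h\|_{H^1(K)}+\|\bw_h\|_{L_2(K)}$ up to a common scaling factor. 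The bookkeeping is slightly delicate here: I will track the scaling factor of $\calP_{\ba_K}$ (namely $h^{-1}$ times the area scaling) so that the $L_2$ term appears with weight $h^{-1}$. Since $h\lesssim1$, this gives $h\|\bw_h\|_{H^2(K)}\lesssim\|\bw_h\|_{H^1(K)}$, and squaring proves \eqref{eq:discinverse}.

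For \eqref{inverse_ineq_contin} I will use $\pt\bw_h=\calP_{\bp}\bw_h$ on $K^\gamma$, with $\calP_{\bp}$ bounded in $W^m_\infty$ by \eqref{eq:pprops}, and with $\bp^{-1}$ a smooth map whose derivatives are $\lesssim1$. By \eqref{eq:normequiv} with $m=2$, followed by the previous step, I get $h\|\pt\bw_h\|_{H^2(K^\gamma)}\lesssim h\|\bw_h\|_{H^2(K)}\lesssim\|\bw_h\|_{H^1(K)}\simeq\|\pt\bw_h\|_{H^1(K^\gamma)}$. I then bound ${\rm D}_\gamma{\rm Def}_\gamma$ pointwise by second and first derivatives, using smoothness of $\bPi$ on $\gamma$. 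The only real obstacle is the scaling bookkeeping for Piola transforms in the reference argument; all other steps are routine.
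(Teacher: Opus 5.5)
Your overall strategy (pull back, use finite dimensionality, push forward, with the geometric maps contributing only lower-order terms) is sound, but the decisive scaling step fails as written. You use the full-norm reference inverse inequality $|\hat\bw|_{H^2(\hat K)}\lesssim\|\hat\bw\|_{H^1(\hat K)}$. Then the term $h^{-2}|\hat\bw|_{H^2(\hat K)}$ in the bound for $|\bw_h|_{H^2(K)}$ produces $h^{-2}\|\hat\bw\|_{L_2(\hat K)}\simeq h^{-2}\|\bw_h\|_{L_2(K)}$. After multiplying by $h$ you arrive at
\[
h|\bw_h|_{H^2(K)}\lesssim \|\bw_h\|_{H^1(K)}+h^{-1}\|\bw_h\|_{L_2(K)},
\]
which is exactly what your remark that the $L_2$ term ``appears with weight $h^{-1}$'' describes. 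The appeal to $h\lesssim 1$ goes in the wrong direction. The term $h^{-1}\|\bw_h\|_{L_2(K)}$ is not bounded by $\|\bw_h\|_{H^1(K)}$ uniformly in $h$; consider any field whose $H^1(K)$ norm is comparable to its $L_2(K)$ norm. So the claimed estimate does not follow from what you wrote.

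The missing ingredient is to lose nothing on the zeroth-order part. On $\hat\bV$, use the seminorm inverse inequality $|\hat\bw|_{H^2(\hat K)}\lesssim|\hat\bw|_{H^1(\hat K)}$. It is valid because constants, which form the kernel of $|\cdot|_{H^1}$, are annihilated by $|\cdot|_{H^2}$. With the scalings in \eqref{eq:piolaprops} this gives
\[
h|\bw_h|_{H^2(K)}\lesssim h\|\hat\bw\|_{L_2(\hat K)}+|\hat\bw|_{H^1(\hat K)}+h^{-1}|\hat\bw|_{H^2(\hat K)}\lesssim h\|\hat\bw\|_{L_2(\hat K)}+h^{-1}|\hat\bw|_{H^1(\hat K)}\lesssim \|\bw_h\|_{H^1(K)}.
\]
The last step uses $\|\hat\bw\|_{L_2(\hat K)}\lesssim\|\bw_h\|_{L_2(K)}$ and $|\hat\bw|_{H^1(\hat K)}\lesssim h\|\bw_h\|_{H^1(K)}$. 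In the latter, the full norm on the right absorbs the derivatives of the Piola matrix. With this repair the rest of your argument goes through: the pointwise bound of ${\rm D}{\rm Def}$ by first and second derivatives, then \eqref{eq:normequiv} with $m=2$ for the estimate on $K^\gamma$.

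The paper sidesteps the issue by working on the affine triangle $\bar K$ rather than on $\hat K$. There $\bar\bw_h$ is a genuine polynomial in physical coordinates, so the standard inverse inequality $h\|\bar\bw_h\|_{H^2(\bar K)}\lesssim\|\bar\bw_h\|_{H^1(\bar K)}$ holds directly. The $h$-uniform equivalences \eqref{eq:hnormequiv} for $m=1,2$ then transfer it to $K$ and $K^\gamma$.
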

\begin{proof} \hspace{.1cm}
 From norm equivalence between surfaces and an inverse inequality on the planar triangle we obtain:
\begin{align*}
h^2
||\mathrm{D}_{\gamma}\mathrm{Def}_{\gamma}\pt \bw_h||^2_{L^2(K^{\gamma})}
&\lesssim
h^2
||\pt \bw_h||^2_{H^2(K^{\gamma})}
\lesssim
h^2||\bar \bw_h||^2_{H^2(\overline{K})} \\
&\lesssim
||\bar \bw_h||^2_{H^1_h(\overline{K})}
\lesssim
||\pt \bw_h||^2_{H^1_h(K^{\gamma})}.
\end{align*}
The second inequality is proved similarly.
\end{proof}

\begin{lemma}
Let $ \bw \in H({\rm div},\Gamma_h^k) \cap H_h^1(\Gamma_h^k)$.  Then
\begin{align}\label{relat_order_in_inf1}
\frac{\rho}{h} \|[\bw^\ell]_{\gamma} - \bPi[\bw^\ell]_{\Gamma_h^k} \|_{L_2(\varSigma)}^2
& \lesssim h^{2k} \|\bw\|_{H_h^1(\Gamma_h^k)}^2,
 \\
\frac{\rho}{h} \|[\bw^\ell]_{\gamma}\|_{L_2(\varSigma^\gamma)}^2
&\lesssim \label{relat_order_in_inf2} h^{2k-2} \|\bw\|_{H_h^1(\Gamma_h^k)}^2 +\frac{\rho}{h}\|[\bw]_{\Gamma_h^k}\|_{L_2(\Sigma)}^2.
\end{align}    
\end{lemma}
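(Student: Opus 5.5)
I read $\bw^\ell$ as the pointwise lift $\bw\circ\bp^{-1}$, so that $\bw^\ell_i=\bw_i\circ\bp^{-1}$ on $e^\gamma$. Since the jump of the lift is taken edge by edge, I would argue pointwise on each edge $e=K_1\cap K_2$ and then integrate. I would use the edge norm equivalence \eqref{eq:edgenormeq} (equivalently $|1-\mu_e|\lesssim h^{k+1}$) to pass between $L_2(e)$ and $L_2(e^\gamma)$. With $[\bw]=\bw_2-\bw_1$ evaluated at corresponding points, the key algebraic step is the decomposition
\begin{equation*}
[\bw^\ell]_\gamma-\bPi[\bw^\ell]_{\Gamma_h^k}
=\big([\bw]\cdot(\bt_\gamma-\bt_h^k)\big)\bt_\gamma+\big([\bw]\cdot\bt_h^k\big)\big(\bt_\gamma-\bPi\bt_h^k\big).
\end{equation*}
The second term is pointwise $\lesssim h^{k+1}(|\bw_1|+|\bw_2|)$ by \eqref{eq:tangent_conormal}.

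The main obstacle is the first term. Using only \eqref{eq:tc2}, $|\bt_\gamma-\bt_h^k|\lesssim h^k$, gives an $O(h^k)$ pointwise factor. After the trace inequality this costs $h^{2k-2}$ instead of $h^{2k}$, so a finer splitting is needed. I would write $\bt_\gamma-\bt_h^k=(\bt_\gamma-\bPi\bt_h^k)-(\bnu\cdot\bt_h^k)\bnu$. The first piece is $O(h^{k+1})$ by \eqref{eq:tangent_conormal}. For the second piece, $\bnu\cdot\bt_h^k=(\bnu-\bnu_{h})\cdot\bt_h^k=O(h^k)$ by \eqref{eq:geoest}, since $\bt_h^k$ is tangent to both $K_1$ and $K_2$. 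Also $[\bw]\cdot\bnu=\bw_2\cdot(\bnu-\bnu_{h,2})-\bw_1\cdot(\bnu-\bnu_{h,1})=O(h^k(|\bw_1|+|\bw_2|))$, because each $\bw_i$ is tangent to $K_i$. Hence the first term is $\lesssim (h^{k+1}+h^{2k})(|\bw_1|+|\bw_2|)\lesssim h^{k+1}(|\bw_1|+|\bw_2|)$ for $k\ge1$.

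Squaring, integrating over $e^\gamma$, and summing over edges then gives
\begin{equation*}
\frac{\rho}{h}\|[\bw^\ell]_\gamma-\bPi[\bw^\ell]_{\Gamma_h^k}\|_{L_2(\varSigma^\gamma)}^2\lesssim h^{2k+1}\sum_{e\in\calE}\sum_{i=1,2}\|\bw_i\|_{L_2(e)}^2 .
\end{equation*}
Applying the scaled trace inequality \eqref{eq:scaledtrace} on each $K_i$ bounds the right-hand side by $h^{2k+1}\big(h^{-1}\|\bw\|_{L_2(\Gamma_h^k)}^2+h\|\bw\|_{H_h^1(\Gamma_h^k)}^2\big)\lesssim h^{2k}\|\bw\|_{H_h^1(\Gamma_h^k)}^2$. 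Finite overlap of the elements is used here, and $\rho$ is absorbed into the generic constant. This proves \eqref{relat_order_in_inf1}.

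For \eqref{relat_order_in_inf2}, I would use the triangle inequality
\begin{equation*}
\|[\bw^\ell]_\gamma\|\le\|[\bw^\ell]_\gamma-\bPi[\bw^\ell]_{\Gamma_h^k}\|+\|\bPi[\bw^\ell]_{\Gamma_h^k}\| .
\end{equation*}
Then $|\bPi[\bw]_{\Gamma_h^k}|\le|[\bw]_{\Gamma_h^k}|$ pointwise, and the edge norm equivalence \eqref{eq:edgenormeq} transfers this term back to $L_2(\varSigma)$. The remaining term is handled by \eqref{relat_order_in_inf1}, and $h^{2k}\le h^{2k-2}$ gives the stated bound. The weaker power $h^{2k-2}$ is thus harmless, and the crude bound $|\bt_\gamma-\bt_h^k|\lesssim h^k$ would already suffice for this second estimate.
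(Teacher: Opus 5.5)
Your argument is correct and essentially follows the paper's proof of the first estimate. You use the same splitting through $\bPi\bt_h^k$, the bound $|\bt_\gamma-\bPi\bt_h^k|\lesssim h^{k+1}$, the $O(h^{2k})$ product $([\bw]\cdot\bnu)(\bt_h^k\cdot\bnu)$ (which rests on each $\bw_i$ being tangent to $K_i$), and then the scaled trace inequality. For the second estimate the paper uses the direct crude splitting $|[\bw^\ell]_\gamma|\lesssim h^k|[\bw^\ell]|+|[\bw]_{\Gamma_h^k}|$. Your route, the triangle inequality through the first estimate together with $|\bPi[\bw]_{\Gamma_h^k}|\le|[\bw]_{\Gamma_h^k}|$, is equally valid and even gives the sharper factor $h^{2k}$ in place of $h^{2k-2}$.
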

\begin{proof} \hspace{.1cm}
Assume first that we are assessing the above quantities on an edge $e=K_1 \cap K_2$ and denote by $\bw_i$ the function $\bw$ evaluated on $K_i$, $i=1,2$.  

Employing $[\bw^\ell]_\gamma=[\bw^\ell]\cdot \bt_\gamma \bt_\gamma$, $[\bw^\ell]_{\Gamma_h^k}=[\bw^\ell]\cdot \bt_h^k \bt_h^k$, adding and subtracting terms, recalling that  $\bPi - \bI = -\bnu \otimes \bnu$, and noting that $[\bw^\ell]\cdot \bnu = \bw_1^\ell \cdot (\bnu-\bnu_{h,1}) -\bw_2^\ell\cdot  (\bnu-\bnu_{h,2})$, we obtain
\begin{equation}
\label{eq337}
\begin{aligned}
|[\bw^\ell]_\gamma & -\bPi [\bw^\ell]_{\Gamma_h^k}|  =| [\bw^\ell]\cdot (\bt_\gamma-\bPi \bt_h^k) \bt_\gamma +[\bw^\ell]\cdot \bPi \bt_h^k (\bt_\gamma-\bPi \bt_h^k) 
\\ & ~~~~ + [\bw^\ell] \cdot (\bPi \bt_h^k -\bt_h^k) \bPi \bt_h^k + [\bw^\ell] \cdot \bt_h^k \bPi \bt_h^k - [\bw^\ell] \cdot \bt_h^k \bPi \bt_h^k|
\\ & =| [\bw^\ell]\cdot (\bt_\gamma-\bPi \bt_h^k) \bt_\gamma +[\bw^\ell]\cdot \bPi \bt_h^k (\bt_\gamma-\bPi \bt_h^k) -[\bw^\ell] \cdot \bnu (\bt_h^k \cdot \bnu) \bPi \bt_h^k|
\\ & =|  [\bw^\ell]\cdot (\bt_\gamma-\bPi \bt_h^k) \bt_\gamma +[\bw^\ell]\cdot \bPi \bt_h^k (\bt_\gamma-\bPi \bt_h^k) 
\\ & ~~~~~ - ( \bw_1^\ell \cdot (\bnu-\bnu_{h,1}) -\bw_2^\ell\cdot  (\bnu-\bnu_{h,2})) (\bt_h^k \cdot (\bnu-\bnu_{h1})) \bPi \bt_h^k |
\\ & \lesssim (h^{k+1} + h^{k+1} + h^{2k} + h^{2k})(|\bw_1^\ell| + |\bw_2^\ell|) \lesssim h^{k+1} (|\bw_1^\ell| + |\bw_2^\ell|).
\end{aligned}
\end{equation}
In the next to last step we have employed \eqref{eq:geoest} and \eqref{eq:tangent_conormal}.  By the scaled trace inequality \eqref{eq:scaledtrace} there holds for $\bw \in [H_h^1(\Gamma_h^k)]^3$ and for $e=K_1 \cap K_2$ that $\|\bw_i\|_{L_2(e)}^2 \lesssim h^{-1} \|\bw\|_{L_2(K_1 \cup K_2)}^2 + h\|\bw \|_{H_h^1(K_1 \cup K_2)}$.  Taking edge $L_2$ norms of both sides of \eqref{eq337}, squaring, and multiplying both sides by $\frac{\rho}{h}$ thus yields
\begin{equation}
\begin{aligned}
\frac{\rho}{h} \|[\bw^\ell]_{\gamma} - \bPi[\bw^\ell]_{\Gamma_h^k} \|_{L_2(\varSigma)}^2 & \lesssim h^{2k+1} ( h^{-1} \|\bw\|_{L_2(K_1 \cup K_2)}^2 + h\|\nabla_{\Gamma_h^k} \bw\|_{L_2(K_1 \cup K_2)}) 
\\ & \lesssim h^{2k} \|\bw\|_{H_h^1(\Gamma_h^k)}^2,  
\end{aligned}
\end{equation}
 which gives the first desired estimate after summing over the edges.  Similarly, 
\begin{equation}
|[\bw^\ell]_\gamma| =\left |  [\bw^\ell]\cdot (\bt_\gamma-\bt_h^k) \bt_\gamma + [\bw^\ell] \cdot \bt_h^k (\bt_\gamma-\bt_h^k)+ [\bw^\ell]\cdot \bt_h^k \bt_h^k \right |  \lesssim h^k |[\bw^\ell]| + |[\bw]_{\Gamma_h^k}|.
\end{equation}
Taking $L_2$ norms of both sides and squaring the result, applying the scaled trace inequality \eqref{eq:scaledtrace} to the first term, and multiplying by $\frac{\rho}{h}$ yields 
\begin{equation} 
\frac{\rho}{h} \|[\bw^\ell]_{\gamma}\|_{L_2(e^\gamma)}^2 \lesssim h^{2k-2} \|\bw\|_{H_h^1(\Gamma_h^k)} +\frac{\rho}{h}\|[\bw]_{\Gamma_h^k}\|_{L_2(e)}^2.
\end{equation}
The second desired result follows after summing over the edges.
\end{proof}

We now define a bilinear form which encodes the main portion of the geometric error.  
\begin{equation}
\label{eq:Gdef}
\begin{aligned}
\mathrm{G} (\bu_h, & \bw_h)
: =
a_{\gamma}(\pt \bu_h, \pt \bw_h)
-
a_{\Gamma_h^k}(\bu_h, \bw_h) \\
  = &   
\left (2\int_{\gamma}
\mathrm{Def}_{\gamma}\pt \bu_h
:
\mathrm{Def}_{\gamma}\pt \bw_h
-
2\int_{\Gamma_h^k}
\mathrm{Def}_{\Gamma_h^k}\bu_h
:
\mathrm{Def}_{\Gamma_h^k}\bw_h
+\int_{\gamma}\pt \bu_h
\cdot
\pt \bw_h
-
\int_{\Gamma_h^k}
\bu_h
\cdot
\bw_h  \right )
\\ 
\hspace{.2cm} & - 
\left(\sum_{e^{\gamma}\in \calE^\gamma}
\int_{e^{\gamma}}[\pt \bu_h]_{\gamma}\cdot\{{\mathrm{Def}_{\gamma}\pt \bw_h}\}_{\gamma}
-
\sum_{e\in \calE}
\int_{e}[\bu_h]_{\Gamma_h^k}
\cdot \{{\mathrm{Def}_{\Gamma_h^k}\bw_h}\}_{\Gamma_h^k}\right) \\  ~~
&-
\left(\sum_{e^{\gamma}\in \calE^\gamma}
\int_{e^{\gamma}}[\pt \bw_h]_{\gamma}\cdot\{{\mathrm{Def}_{\gamma}\pt \bu_h}\}_{\gamma}
-
\sum_{e\in \calE}
\int_{e}[\bw_h]_{\Gamma_h^k}
\cdot \{{\mathrm{Def}_{\Gamma_h^k}\bu_h}\}_{\Gamma_h^k}\right) \\ ~~
&+
\left(\sum_{e^{\gamma}\in \calE^\gamma}
\frac{\rho}{h}
\int_{e^{\gamma}}
[\pt \bu_h]_{\gamma} \cdot
[\pt \bw_h]_{\gamma}
-
\sum_{e\in \calE}
\frac{\rho}{h}\int_{e}
[\bu_h]_{\Gamma_h^k} \cdot
[\bw_h]_{\Gamma_h^k}\right)
\\ =: &  I - II - III + IV. 
\end{aligned}
\end{equation}

\begin{theorem} \label{geotheorem}
Given $\bu_h, \bw_h\in H({\rm div}, \Gamma_h^k) \cap H_h^2(\Gamma_h^k)$, the following inequality holds:
\begin{align} \label{eq:G_bound_mod}
|G(\bu_h, \bw_h)| 
\lesssim
h^k \vvvert \pt \bu_h \vvvert_{\gamma} \vvvert \pt \bw_h \vvvert_{\gamma}.
\end{align}   
If $\bu_h, \bw_h \in \bV_h$, then
\begin{align} \label{G_bound}
|G(\bu_h, \bw_h)| 
\lesssim
h^k \| \pt \bu_h \|_{DG, \gamma} \| \pt \bw_h \|_{DG,\gamma}.
\end{align}   
If instead $\bu, \bw \in {\bf HT}^1(\gamma) \cap [H^2(\gamma)]^3$ and $k\ge 2$, then
\begin{equation}
\label{eq:GL2est}
G(\ipt \bu, \ipt \bw) \lesssim h^{k+1}  \|\bu\|_{H^2(\gamma)}\|\bw\|_{H^2(\gamma)}.  
\end{equation}
\eqref{eq:GL2est} holds when $k=1$ under the additional assumption that for any two vertices $z_1, z_2$ of $\bar \Gamma_h=\Gamma_h^1$ sharing an edge of $\bar \Gamma_h$, there holds $|d(z_1)-d(z_2)| \lesssim h^3$. 

\end{theorem}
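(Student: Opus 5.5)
The plan is to bound the four pieces $I$--$IV$ of \eqref{eq:Gdef} separately, always transferring integrals to $\gamma$ via $\int_{\Gamma_h^k} q = \int_\gamma \mu_h^{-1} q\circ\bp^{-1}$ and $\int_e q=\int_{e^\gamma}\mu_e^{-1}q\circ\bp^{-1}$.

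\textbf{Volume term $I$.} Write
\[
\mathrm{Def}_\gamma\pt\bu_h:\mathrm{Def}_\gamma\pt\bw_h-\mu_h^{-1}(\mathrm{Def}_{\Gamma_h^k}\bu_h:\mathrm{Def}_{\Gamma_h^k}\bw_h)\circ\bp^{-1}
\]
and bound it using \eqref{eq:defequiv} and $|1-\mu_h|\lesssim h^{k+1}$. The mass part is treated the same way, now using $|\calP_\bp\bw-\bw|\lesssim h^k|\bw|$. This gives $|I|\lesssim h^k\|\pt\bu_h\|_{H_h^1(\gamma)}\|\pt\bw_h\|_{H_h^1(\gamma)}$.

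\textbf{Jump terms $II$--$IV$.} Compare $[\pt\bw_h]_\gamma$ with $[\bw_h]_{\Gamma_h^k}\circ\bp^{-1}$. Pointwise, $[\pt\bw_h]_\gamma-\bPi[\bw_h^\ell]_{\Gamma_h^k}$ splits into two pieces:
\begin{itemize}
\item the difference $\calP_\bp\bw_h-\bw_h^\ell=O(h^k)|\bw_h|$;
\item the term treated in \eqref{relat_order_in_inf1}, which is $O(h^{k+1})$ before trace scaling.
\end{itemize}
Similarly, $\{\mathrm{Def}_\gamma\pt\bw_h\}_\gamma-\{\mathrm{Def}_{\Gamma_h^k}\bw_h\}_{\Gamma_h^k}\circ\bp^{-1}$ is $O(h^k)(|\nabla\bw_h|+|\bw_h|)$. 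This follows from \eqref{eq:defequiv} together with $|\bn_\gamma-\bn_h^k|+|\bn_\gamma+\bn_{h2}^k|\lesssim h^k$ from \eqref{eq:tc2}. Note that $\mathrm{Def}$ is tangential, so it pairs with $\bPi$.

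For $IV$, expand the product $ab-a'b'=(a-a')b+a'(b-b')$ and use scaled trace inequalities \eqref{eq:scaledtrace}:
\begin{itemize}
\item the pointwise $O(h^k)$ factor times $h^{-1}\|\cdot\|_{L_2(e)}^2$ produces $h^k(h^{-2}\|\cdot\|^2_{L_2}+\|\cdot\|_{H^1_h}^2)$;
\item this is too weak by a factor $h^{-1}$ unless it is paired against a jump factor, which is controlled by $\rho h^{-1}\|[\cdot]\|^2$ in the DG norm.
\end{itemize}
So I pair one difference factor (estimated with trace inequalities by $h^{k}\cdot h^{-1/2}\|\cdot\|_{H^1_h}$-type bounds) with the other jump factor, using $h^{-1/2}\|[\cdot]\|$. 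Terms $II$ and $III$ need trace inequalities for $\nabla$, which brings in $h\|\cdot\|_{H^2_h}$. This explains why the $\vvvert\cdot\vvvert$ norm appears in \eqref{eq:G_bound_mod}. \eqref{G_bound} then follows from \eqref{eq:triplebarequiv} and \eqref{inverse_ineq_contin}.

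\textbf{$L_2$-type estimate \eqref{eq:GL2est}.} Here $\bu,\bw\in\HT^1(\gamma)\cap H^2$, so $[\bu]_\gamma=0$. Jumps of $\ipt\bu$ on $\Gamma_h^k$ come only from the discontinuity of $\bnu_h$ across edges, and the conormal mismatch is of size $h^k$. I will use the identity $\bnu\cdot\bnu_h$-corrections to show:
\begin{itemize}
\item $[\ipt\bu]_{\Gamma_h^k}=O(h^{2k})|\bu|$ or better. The tangential component along $\bt_h^k$ of $\ipt\bu$ from both sides agrees up to terms involving $(\bnu-\bnu_{hi})\cdot\bt_h^k=O(h^{k+1})$, since $\bt_h^k$ is tangent to both elements.
\item Term $IV$ is therefore $h^{-1}O(h^{2k+2})$, which is fine.
\item $II$ and $III$ reduce to $\int_e\{\mathrm{Def}\}\cdot[\ipt\bu]$, with $[\ipt\bu]=O(h^{k+1})$ pointwise and edges of total length $O(h^{-1})$, giving $h^{k}$ only.
\end{itemize}
So the main obstacle is $II$--$III$. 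Pointwise bounds lose one power, and one must exploit cancellation of $\int_e$ of the jump. I would identify the leading part of $[\ipt\bu]_{\Gamma_h^k}$ as $(\bu\cdot\bt)\,c_e$ with $c_e$ built from $d\,\bH$ and $\nabla d$ along $e$. For $k\ge2$ it is $O(h^{k+1})$ with an extra derivative structure, allowing an integration by parts along the edge or a supercloseness argument giving $h^{k+1}\|\bu\|_{H^2}\|\bw\|_{H^2}$. For $k=1$ the leading term involves the tangential derivative of $d$ along $\bar e$, i.e.\ $(d(z_1)-d(z_2))/h$. This is $O(h)$ in general but $O(h^2)$ under the stated vertex hypothesis, which restores the extra power.

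The volume term $I$ for the $L_2$ estimate likewise needs the sharper geometric identity. The leading $O(h^k)$ part of $\mathrm{Def}_\gamma$ versus $\mathrm{Def}_{\Gamma_h^k}$ is $\bPi(\bnu-\bnu_h)\otimes\ldots$, whose integral against smooth fields is $O(h^{k+1})$ after elementwise integration by parts; this is also sensitive to $k=1$. I expect this edge and vertex cancellation to be the hardest step.
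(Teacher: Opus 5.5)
There is a genuine gap in the energy-type bound \eqref{eq:G_bound_mod}; your $I$--$IV$ decomposition and your treatment of $I$ and of the average mismatch are fine. You bound the Piola part of $[\pt\bw_h]_\gamma-\bPi[\bw_h^\ell]_{\Gamma_h^k}$ by $|\calP_{\bp}\bw_h-\bw_h^\ell|=O(h^k)|\bw_h|$. Take $a=[\pt\bu_h]_\gamma$, $a'=[\bu_h^\ell]_{\Gamma_h^k}$, $b=[\pt\bw_h]_\gamma$ as in your splitting. The scaled trace inequality then only gives
\[
\frac{\rho}{h}\int_{e^\gamma}|a-a'|\,|b|\lesssim h^{k-1}\big(\|\bu_h\|_{L_2(K_1\cup K_2)}+h\|\nabla\bu_h\|_{L_2(K_1\cup K_2)}\big)\,h^{-1/2}\|b\|_{L_2(e^\gamma)},
\]
that is, $h^{k-1}$. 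The jump factor $b$ absorbs only $h^{-1/2}$ of the penalty weight, and tracing the $L_2$ part of $\bu_h$ costs another $h^{-1/2}$. The same loss occurs for $(a-a')\cdot\{\mathrm{Def}_\gamma\pt\bw_h\}_\gamma$ in $II$ and $III$, since the average obeys only an $h^{-1/2}$-scaled trace bound; for $k=1$ nothing is proved. The missing observation is that in $\calP_{\bp}\bw_h-\bw_h^\ell=(\mu_h^{-1}-1)(\bPi-d\bH)\bw_h^\ell-d\bH\bw_h^\ell-(\bnu\cdot\bw_h^\ell)\bnu$ the only $O(h^k)$ term is normal to $\gamma$, so it is annihilated by $\cdot\,\bt_\gamma\bt_\gamma$. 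Hence $[\pt\bw_h]_\gamma-[\bw_h^\ell]_\gamma=[(\bPi-d\bH)(\mu_h^{-1}-1)\bw_h^\ell]_\gamma-[d\bH\bw_h^\ell]_\gamma=O(h^{k+1})|\bw_h^\ell|$, which is exactly how the paper treats $IV$. Combine this with \eqref{relat_order_in_inf1} and with the remainder $(\bI-\bPi)[\bw_h^\ell]_{\Gamma_h^k}$, which is $O(h^k)$ times a discrete jump. The mismatch is then $O(h^{k+1})$ up to jump-controlled terms, and the same scaling yields $h^k$.

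For \eqref{eq:GL2est} with $k\ge2$, your own first bullet already finishes the proof, and no edge cancellation is needed. The form $j_{\Gamma_h^k}$ contains only tangential jumps, $|[\ipt\bu]_{\Gamma_h^k}|\lesssim h^{2k}|\bu|$, and Cauchy--Schwarz with a scaled trace inequality gives $|j_{\Gamma_h^k}(\ipt\bu,\ipt\bw)|\lesssim h^{2k-1}\|\bu\|_{H^2(\gamma)}\|\bw\|_{H^2(\gamma)}$, with $2k-1\ge k+1$. Adding $j_\gamma(\bu,\bw)=0$ and the volume bound \eqref{eq:geovol}, which the paper imports from \cite[Lemma 5.2]{DN26} for all $k$, completes the argument; the vertex hypothesis enters only through $j_{\Gamma_h^k}$. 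Your later claim that $II$--$III$ see an $O(h^{k+1})$ jump confuses the tangential jump with the full jump. Your justification $(\bnu-\bnu_{hi})\cdot\bt_h^k=O(h^{k+1})$ is also false: $\bnu\cdot\bt_h^k=\partial_{\bt_h^k}d$ is generically only $O(h^k)$, and for $k=1$ its edge integral is precisely what the vertex hypothesis controls. The $h^{2k}$ actually comes from $|\bt_h^k\cdot\bnu|\,|\bnu_{h1}-\bnu_{h2}|\lesssim h^{2k}$ and $|\bnu\cdot\bnu_{h1}-\bnu\cdot\bnu_{h2}|\lesssim h^{2k}$.

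For $k=1$ your sketch captures only one mechanism. The paper first uses normal continuity, $\ipt\bw_1\cdot\bn_{h1}+\ipt\bw_2\cdot\bn_{h2}=0$ together with $1+\bn_{h1}\cdot\bn_{h2}=O(h^2)$, to replace the tangential jump by the full jump $[\ipt\bw]$. It then splits the average--jump integral into three pieces. Two of them are $O(h^2)$ pointwise, hence only $O(h)$ after summing over edges. These are handled by elementwise integration by parts into volume integrals of the form $\int_{\bar\Gamma_h}\bz\cdot\bPi\bnu_h$, followed by the super-approximation estimate \eqref{eq:HLL}. Only the antisymmetric cross piece reduces to $\int_e\partial_{\bt_h}d=d(z_1)-d(z_0)$, and that is where the vertex condition is used. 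Estimate \eqref{eq:HLL} is the ingredient your plan is missing.
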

\begin{proof} \hspace{.1cm}

\noindent {\bf Proof of \eqref{eq:G_bound_mod}: Geometric error bound for piecewise smooth functions.}
Lemma 5.1 of \cite{DN26} and the definition of $\vvvert \cdot \vvvert$ immediately yield
\begin{align}\label{comparison_0013}
  \left|I\right|    
\lesssim
h^k \vvvert \pt{\bw}_h \vvvert_{\gamma}
\vvvert \pt{\bu}_h\vvvert_{\gamma}.   
\end{align}
Note that \cite{DN26} concerns Taylor-Hood rather than $BDM$ elements.  However, the velocity spaces for both elements are identical elementwise and the proof does not rely on interlement continuity.


We next analyze the term $IV$ in \eqref{eq:Gdef}. After using a change of variables and adding and subtracting terms, we obtain
\begin{align}
\begin{split}
\label{jump_terms_2}
  \frac{\rho}{h}\int_{e}
[\bu_h]_{\Gamma_h^k} 
 \cdot
[\bw_h]_{\Gamma_h^k}
&=  
\frac{\rho}{h}\int_{e}\left(1-\mu_e \right)
[\bu_h]_{\Gamma_h^k} \cdot
[\bw_h]_{\Gamma_h^k}
+
\frac{\rho}{h}\int_{e^{\gamma}}
[\bu_h^\ell]_{\Gamma_h^k} \cdot
[\bw_h^\ell]_{\Gamma_h^k}.
\end{split}
\end{align}
Since $|1-\mu_e| \lesssim h^{k+1}$, after using norm equivalence we obtain for the first term in \eqref{jump_terms_2} that
\begin{equation} 
\begin{aligned} \label{jump_terms_5-11} 
\left|
\sum_{e \in \calE}
\int_{e}
\frac{\rho}{h}
\left(1-\mu_e \right )
[\bu_h]_{\Gamma_h^k} \cdot
[\bw_h]_{\Gamma_h^k}\right|
& \lesssim h^{k+1} \frac{\rho}{h} \sum_{e \in \calE} \|[\bu_h]_{\Gamma_h^k} \|_{L_2(e)} \|[\bw_h]_{\Gamma_h^k}\|_{L_2(e)}
\\ &\lesssim
h^{k+1}
\|\pt{\bu}_h\|_{DG, \Gamma_h^k}
\|\pt{\bw}_h\|_{DG,\Gamma_h^k}. 
\end{aligned}
\end{equation}
Now, we will focus on the second term of \eqref{jump_terms_2}. We start with the definition of the jump of the Piola transformation on the continuous surface. By adding and subtracting terms and noting that $\bPi[\bu_h^\ell]_\gamma = [\bu_h^\ell]_\gamma$, we have
\begin{align} \label{jump_terms_5}
[\pt{\bu}_h]_{\gamma} -[\bu_h^\ell]_\gamma \nonumber
&=
\left[(\bPi-d\mathbf{H})\left(
\frac{\bu_{h1}^\ell}{\mu_{h1}}
-
\frac{\bu_{h2}^\ell}{\mu_{h2}}
\right)\right]\cdot \mathbf{t}_{\gamma}
\mathbf{t}_{\gamma}-  [\bu_h^\ell]_\gamma 
\\
\begin{split}
&=
\left[(\bPi-d\mathbf{H})\left(
\left(\frac{1}{\mu_{h1}}-1\right)\bu_{h1}^\ell
+
\left(1
-
\frac{1}{\mu_{h2}}
\right)\bu_{h2}^\ell
\right)\right]\cdot \mathbf{t}_{\gamma}
\mathbf{t}_{\gamma} 
\\ & ~~+
(\bPi-d\mathbf{H})[\bu_h^\ell]_{\gamma} -[\bu_h^\ell]_\gamma 
\end{split}
\\ 
\begin{split} 
\nonumber 
& = [(\bPi- d \bH) \left (\frac{1}{\mu_h} -1 \right) \bu_h^\ell]_\gamma - d \bH [\bu_h^\ell]_\gamma.
\end{split}
\end{align}
From \eqref{jump_terms_5} we can write the following:
\begin{equation}
     \label{jump_terms_6} 
\begin{aligned}
\sum_{e^{\gamma}}\frac{\rho}{h}\int_{e^{\gamma}} & [\pt{\bu}_h]_{\gamma} \cdot [\pt{\bw}_h]_{\gamma}
 - [\bu^\ell_h]_{\Gamma_h^k} \cdot [\bw^\ell_h]_{\Gamma_h^k}
 \\ & =  \sum_{e^{\gamma}}\frac{\rho}{h}\int_{e^{\gamma}} \left [  ([\pt \bu_h]_\gamma- [\bu_h^\ell]_\gamma)\cdot [\pt \bw_h]_\gamma + [ \bu_h^\ell]_\gamma \cdot ([\pt \bw_h]_\gamma -[\bw_h^\ell]_\gamma) \right ]
 \\  & ~~~~~+ 
 \sum_{e^{\gamma}}\frac{\rho}{h}\int_{e^{\gamma}} \left [ [\bu_h^\ell]_{\gamma}\cdot [\bw_h^\ell]_{\gamma}
- [\bu^\ell_h]_{\Gamma_h^k} \cdot [\bw^\ell_h]_{\Gamma_h^k}\right ]
\\ & = \sum_{e^{\gamma}}\frac{\rho}{h}\int_{e^{\gamma}} \left [ [\bu_h^\ell]_{\gamma}\cdot [\bw_h^\ell]_{\gamma}
- [\bu^\ell_h]_{\Gamma_h^k} \cdot [\bw^\ell_h]_{\Gamma_h^k}\right ]
 \\  &~~~~+ \sum_{e^{\gamma}}\frac{\rho}{h}\int_{e^{\gamma}}   [(\bPi- d \bH) (\frac{1}{\mu_h}-1) \bu_h^\ell]_\gamma \cdot [\pt \bw_h]_\gamma + [\bu_h^\ell]_\gamma  [(\bPi- d \bH) (\frac{1}{\mu_h}-1) \bw_h^\ell]_\gamma
 \\ & ~~~~ - \sum_{e^{\gamma}}\frac{\rho}{h}\int_{e^{\gamma}} d\bH [\bu_h^\ell]_\gamma \cdot [\pt \bw_h]_\gamma + [\bu_h^\ell]_\gamma \cdot (d \bH [\bw_h^\ell]_\gamma).
 \end{aligned}
\end{equation}
Employing $|1-(\mu_h)^{-1}|\lesssim h^{k+1}$, scaled trace and inverse inequalities,  \eqref{relat_order_in_inf2}, and norm equivalence yields
\begin{equation}
\label{jumpterms9}
\begin{aligned}
\sum_{e^{\gamma}} & \frac{\rho}{h}\int_{e^{\gamma}}   [(\bPi- d \bH) \left (\frac{1}{\mu_h} -1\right) \bu_h^\ell]_\gamma \cdot [\pt \bw_h]_\gamma + [\bu_h^\ell]_\gamma  [(\bPi- d \bH) \left (\frac{1}{\mu_h} -1 \right ) \bw_h^\ell]_\gamma
\\ & \lesssim h^{k+1} \sum_{e^{\gamma}} \frac{\rho}{h} \left ( \|\bu_h^\ell\|_{L_2(e^\gamma)}\|[\pt \bw_h^\ell]_\gamma\|_{L_2(e^\gamma)} + \|[\bu_h^\ell]_\gamma\|_{L_2(e^\gamma)} \|\bw_h^\ell\|_{L_2(e^\gamma)} \right ) 
\\ & \lesssim h^k \vvvert \pt{\bu}_h\vvvert_{\gamma}
\vvvert \pt{\bw}_h\vvvert_{\gamma}.
 \end{aligned}
\end{equation}
 Using  $|d|\lesssim h^{k+1}$, \eqref{relat_order_in_inf2}, and norm equivalence, we next obtain 
\begin{equation}
 \label{jump_terms7} 
\begin{aligned}
\sum_{e^{\gamma}}\frac{\rho}{h}\int_{e^{\gamma}} d\bH [\bu_h^\ell]_\gamma \cdot [\pt \bw_h]_\gamma + [\bu_h^\ell]_\gamma \cdot (d \bH [\bw_h^\ell]_\gamma) \lesssim
h^{k+1}
\vvvert\pt{\bu}_h\vvvert_{\gamma}
\vvvert\pt{\bw}_h\vvvert_{\gamma}.
\end{aligned}
\end{equation}

We finally focus on the last term of \eqref{jump_terms_6}.  Note that $\bPi[\bu_h^\ell]_{\Gamma_h^k} \cdot \bPi [ \bw_h^\ell]_{\Gamma_h^k} = \bPi [ \bu_h^\ell]_{\Gamma_h^k} \cdot [\bw_h^\ell]_{\Gamma_h^k}$ since $\bPi^\intercal = \bPi=\bPi^2$.  
As a result,  after adding and subtracting terms we obtain
\begin{align} \label{jump_terms_new1}
\begin{split}
\sum_{e^{\gamma}}\frac{\rho}{h} & 
\int_{e^{\gamma}}[\bu_h^\ell]_{\gamma} 
\cdot
[\bw_h^\ell]_{\gamma}
-
[\bu_h^\ell]_{\Gamma_h^k} 
\cdot
[\bw_h^\ell]_{\Gamma_h^k}
=
\sum_{e^{\gamma}}\frac{\rho}{h}\int_{e^{\gamma}}[\bu_h^\ell]_{\gamma} 
\cdot
([\bw_h^\ell]_{\gamma}-\bPi[\bw_h^\ell]_{\Gamma_h^k}) \\
&+
\sum_{e^{\gamma}}\frac{\rho}{h}\int_{e^{\gamma}}([\bu_h^\ell]_{\gamma}-\bPi[\bu_h^\ell]_{\Gamma_h^k})
\cdot
\bPi [\bw_h^\ell]_{\Gamma_h^k} 
+
\sum_{e^{\gamma}}\frac{\rho}{h}\int_{e^{\gamma}}\left[(\bPi-\mathbf{I})
[\bu_h^\ell]_{\Gamma_h^k}\right] 
\cdot
[\bw_h^\ell]_{\Gamma_h^k}. 
\end{split}
\end{align}
We will start with the last term of \eqref{jump_terms_new1}. Since $(\bnu\bnu^\top)^2=\bnu\bnu^\top$ and $(\bnu\bnu^\top)^\top=\bnu\bnu^\top$ we have the following result:
\begin{align}\label{jump_terms_new2}
\left[(\bPi-\mathbf{I})
[\bu_h^\ell]_{\Gamma_h^k}\right] 
\cdot
[\bw_h^\ell]_{\Gamma_h^k}
&=
-
(\bnu\bnu^\top
[\bu_h^\ell]_{\Gamma_h^k}, [\bw_h^\ell]_{\Gamma_h^k})
=
-
(\bnu\bnu^\top
[\bu_h^\ell]_{\Gamma_h^k}, \bnu\bnu^\top[\bw_h^\ell]_{\Gamma_h^k}).
\end{align}
Since  $\mathbf{t}_h^k \perp \bnu^k_{hi}$ we have
\begin{align} \label{jump_terms_new3}
\bnu\bnu^\top
[\bu_h]_{\Gamma_h^k}
=
\bnu (\mathbf{t}_h^k,  [\bu^\ell_h])
(\bnu-\bnu^k_{hi}, \mathbf{t}_h^k).
\end{align}
Using $|\bnu-\bnu^k_{hi}| \lesssim h^k$, \eqref{jump_terms_new2}, \eqref{jump_terms_new3}, and norm equivalence  we obtain
\begin{align}\label{jump_terms_new4} 
 \left
 |\sum_{e^{\gamma}}
\int_{e^{\gamma}} \frac{\rho}{h}
\left[(\bPi-\mathbf{I})
[\bu_h^\ell]_{\Gamma_h^k}\right] 
\cdot
[\bw_h^\ell]_{\Gamma_h^k}  
\right|
&\lesssim
h^{2k}\vvvert \pt{\bu}_h\vvvert_{\gamma}
\vvvert \pt{\bw}_h\vvvert_{\gamma}.
\end{align}
Using \eqref{relat_order_in_inf1}, \eqref{relat_order_in_inf2}, and norm equivalence yields 
\begin{align} \label{jump_terms_new5}
\begin{aligned}
 \left
 |\sum_{e^{\gamma}}
\int_{e^{\gamma}} \frac{\rho}{h}
[\bu_h^\ell]_{\gamma} 
\cdot
([\bw_h^\ell]_{\gamma}-\bPi[\bw_h^\ell]_{\Gamma_h^k})
\right|
 & +  \left |\sum_{e^{\gamma}}
\int_{e^{\gamma}} \frac{\rho}{h}
([\bu_h^\ell]_{\gamma}-\bPi[\bu_h^\ell]_{\Gamma_h^k})\cdot ( \bPi [\bw_h^\ell]_{\Gamma_h^k}) 
\right|
\\ & \lesssim
h^k
\vvvert \pt{\bu}_h\vvvert_{\gamma}
\vvvert \pt{\bw}_h\vvvert_{\gamma}.
\end{aligned}
\end{align}
From inequalities \eqref{jump_terms_5-11}, \eqref{jump_terms7}, \eqref{jumpterms9}, \eqref{jump_terms_new1}, 
 \eqref{jump_terms_new4}, and \eqref{jump_terms_new5} 
we obtain the following upper bound:
\begin{equation}\label{G_bound_jumps}
\left|IV\right|
\lesssim
h^k \vvvert \pt{\bu}_h\vvvert_{\gamma}
\vvvert \pt{\bw}_h\vvvert_{\gamma}.
\end{equation}
\\
\indent
We will continue with the jump-average terms defined in $II$. After using a change of variable and adding and subtracting terms, we have
\begin{align}\label{jump_av_1-1}
\begin{split}
 \int_{e}
[\bu_h]_{\Gamma^k_h}
\cdot \{{\mathrm{Def}_{\Gamma^k_h}\bw_h}\}_{\Gamma^k_h} 
&=
\int_{e^{\gamma}}
\left(\frac{1}{\mu_e}-1\right)[\bu_h^\ell]_{\Gamma_h^k}
\cdot \{\mathrm{Def}_{\Gamma_h^k}\bw_h \}_{\Gamma^k_h}^\ell \\
&+
\int_{e^{\gamma}}
[\bu_h^\ell]_{\Gamma^k_h}
\cdot \{\mathrm{Def}_{\Gamma_h^k}\bw_h\}_{\Gamma^k_h}^\ell.
\end{split}
\end{align}
By using $|1-({\mu_e)}^{-1}|\lesssim h^{k+1}$, a scaled trace inequality \eqref{eq:scaledtrace} on the $\mathrm{Def}$ part, $|\bPi-\bPi_h|\lesssim h^k$, a change of variable, \eqref{eq:defequiv}, and norm equivalence, we have
\begin{align}\label{jump_av_1-2}
\begin{aligned}
\sum_{e^{\gamma} \in \calE^\gamma} & 
\left|\int_{e^{\gamma}}
\left(\frac{1}{\mu_e}-1\right)  [\bu_h^\ell]_{\Gamma_h^k}
\cdot   \{\mathrm{Def}_{\Gamma^h_k}\bw_h\}_{\Gamma_h^k}^\ell
\right| 
\\ &  \lesssim h^{k+1} h^{-1/2} \|[ \bu_h^\ell]_{\Gamma_h^k}\|_{L_2(\Sigma^\gamma)} h^{1/2} \|
 \{\mathrm{Def}_{\Gamma^h_k}\bw_h\}_{\Gamma_h^k}^\ell \|_{L_2(\Sigma^\gamma)}
\\ &\lesssim
h^{k+1}||\bu_h||_{DG,\Gamma_h^k} \vvvert \bw_h\vvvert_{\Gamma_h^k} 
\lesssim
h^{k+1}\vvvert \pt{\bu}_h\vvvert_{\gamma}
\vvvert \pt{\bw}_h\vvvert_{\gamma}.
\end{aligned} 
\end{align}
By the definition of the averages we have
\begin{align} \label{jump_av_2}
\begin{split}
[\pt{\bu}_h]_{\gamma}
\cdot
\{{\mathrm{Def}_{\gamma}\pt{\bw}_h}\}_{\gamma}
&=   
[\pt{\bu}_h]_{\gamma}\cdot  \left  ( \mathrm{Def}_{\gamma}\pt{\bw}_{h1}\bn_{{\gamma}1}
-
\mathrm{Def}_{\gamma}\pt{\bw}_{h2}\bn_{{\gamma}2} \right )\\
[\bu_h^\ell]_{\Gamma^k_h}
\cdot \{\mathrm{Def}_{\Gamma_h^k}\bw_h\}_{\Gamma_h^k}^\ell
&= 
[\bu_h^\ell]_{\Gamma^k_h}
\cdot 
\left ( (\mathrm{Def}_{\Gamma^k_h}\bw_{h1})^\ell\bn_{h1}^k 
- (\mathrm{Def}_{\Gamma^k_h}\bw_{h2})^\ell\bn_{h2}^k \right ).
\end{split}
\end{align}
We will compare the integrals of the terms given in \eqref{jump_av_2}. For $i\in\{1,2\}$, by adding and subtracting terms, we have:
\begin{align}
\begin{split}
\label{jump_av_add_subtract}
\int_{e^{\gamma}}[\pt{\bu}_h]_{\gamma}\cdot \mathrm{Def}_{\gamma}\pt{\bw}_{hi}\bn_{{\gamma}i}
&-
[\bu_h^\ell]_{\Gamma^k_h}
\cdot 
((\mathrm{Def}_{\Gamma^k_h}\bw_{hi})^\ell\bn_{hi}^k \\
&=
\int_{e^{\gamma}}[\pt{\bu}_h]_{\gamma}\cdot \mathrm{Def}_{\gamma}\pt{\bw}_{hi}(\bn_{{\gamma}i}
-\bPi\bn_{hi}^k) \\
&+
\int_{e^{\gamma}}([\pt{\bu}_h]_{\gamma}
-
[\bu_h^\ell]_{\Gamma^k_h})
\cdot 
\mathrm{Def}_{\gamma}\pt{\bw}_{hi}\bPi\bn_{hi}^k\\
&+
\int_{e^{\gamma}}[\bu_h^\ell]_{\Gamma^k_h}
\cdot 
(\mathrm{Def}_{\gamma}\pt{\bw}_{hi}
-
(\mathrm{Def}_{\Gamma^k_h}\bw_{hi})^\ell)\bPi\bn_{hi}^k\\
&+
\int_{e^{\gamma}}[\bu_h^\ell]_{\Gamma^k_h}
\cdot 
((\mathrm{Def}_{\Gamma^k_h}\bw_{hi})^\ell(\bPi\bn_{hi}^k
-\bn_{hi}^k).
\end{split}
\end{align}

 Using $|\bn_{{\gamma}i}-\bPi\bn_{hi}^k|\lesssim h^{k+1}$ from \eqref{eq:tangent_conormal}, multiplying  the first and second factors in the integral below respectively by $h^{-1/2}$ and $h^{1/2}$ as in \eqref{jump_av_1-2}, 
employing a scaled trace inequality for the second term, and using  \eqref{inverse_ineq_contin} and the definition of the DG norm, we obtain:
\begin{align}\label{jump_av_add_subtract_1}
\left|\sum_{e^{\gamma} \in \calE^\gamma}
\int_{e^{\gamma}}  [\pt{\bu}_h]_{\gamma}\cdot  \mathrm{Def}_{\gamma}\pt{\bw}_{hi}(\bn_{{\gamma}i}
-\bPi\bn_{hi}^k)\right|  
\lesssim
h^{k+1}\| \pt{\bu}_h\|_{DG, \gamma}
\vvvert \pt{\bw}_h\vvvert_{\gamma}
\lesssim 
h^{k+1}\vvvert \pt{\bu}_h\vvvert_{\gamma}
\vvvert \pt{\bw}_h\vvvert_{\gamma}.
\end{align}
\indent
Define $\mathrm{Def}_{\gamma}\pt{\bw}_{hi}\bPi\bn_{hi}^k=\mathbf{z}$. Since $\bPi\mathbf{z}=\mathbf{z}$, we have $([\pt{\bu}_h]_{\gamma}
-
[\bu_h^\ell]_{\Gamma^k_h})\cdot \mathbf{z}=([\pt{\bu}_h]_{\gamma}
-
\bPi[\bu_h^\ell]_{\Gamma^k_h})\cdot \mathbf{z}$. Multiplying the first and second terms below by $h^{-1/2}$ and $h^{1/2}$ respectively and then using this identity, \eqref{relat_order_in_inf1}, and a trace inequality for the second term we obtain
\begin{align}\label{jump_av_add_subtract_2} 
\left|\sum_{e^{\gamma} \in \calE^\gamma} \int_{e^\gamma}
([\pt{\bu}_h]_{\gamma}
-
[\bu_h^\ell]_{\Gamma^k_h})
\cdot 
\mathrm{Def}_{\gamma}\pt{\bw}_{hi}\bPi\bn_{hi}^k
\right|
&\lesssim
h^k\vvvert \pt{\bu}_h\vvvert_{\gamma}
\vvvert \pt{\bw}_h\vvvert_{\gamma}.
\end{align}
\indent
Multiplying the first and second terms below by $h^{-1/2}$ and $h^{1/2}$ respectively and employing \eqref{eq:defequiv}, a trace inequality \eqref{eq:scaledtrace} for the second term, and a change of variables, we obtain:
\begin{align}\label{jump_av_add_subtract3} \begin{aligned}
\left|\sum_{e^{\gamma} \in \calE^\gamma} \int_{e^\gamma}
[\bu_h^\ell]_{\Gamma^k_h}
\cdot 
(\mathrm{Def}_{\gamma}\pt{\bw}_{hi}
-
(\mathrm{Def}_{\Gamma^k_h}\bw_{hi})^\ell)\bPi\bn_{hi}^k
\right|
&\lesssim
h^k||\pt{\bu}_h||_{DG,\gamma}\vvvert \pt{\bw}_h\vvvert_{\gamma} 
\\ & \lesssim 
h^k\vvvert \pt{\bu}_h\vvvert_{\gamma}
\vvvert \pt{\bw}_h\vvvert_{\gamma}.
\end{aligned}
\end{align}
\indent
Using $|\bPi-\bPi_{hi}|\lesssim h^k$, $\bPi_{hi}\bn_{hi}^k=\bn_{hi}^k$, change of variables, a scaled trace inequality for the second term, \eqref{eq:tangent_conormal}, and norm equivalence we obtain
\begin{align}\label{jump_av_add_subtract4} 
 \left|\sum_{e^{\gamma} \in \calE^\gamma} \int_{e^\gamma}
[\bu_h^\ell]_{\Gamma^k_h}
\cdot 
((\mathrm{Def}_{\Gamma^k_h}\bw_{hi})^\ell(\bPi\bn_{hi}^k
-\bn_{hi}^k) 
\right|
&\lesssim
h^k||\pt{\bu}_h||_{DG,\gamma}
\vvvert \pt{\bw}_h\vvvert_{\gamma} \lesssim h^k \vvvert \pt{\bu}_h\vvvert_{\gamma}\vvvert \pt{\bw}_h\vvvert_{\gamma}.
\end{align}
As a result, from \eqref{jump_av_1-2}, \eqref{jump_av_add_subtract}, \eqref{jump_av_add_subtract_1}, \eqref{jump_av_add_subtract_2}, \eqref{jump_av_add_subtract3}, and \eqref{jump_av_add_subtract4} we obtain
\begin{align}\label{jump_av_add_subtract5} 
|II|
\lesssim h^k
\vvvert \pt{\bu}_h\vvvert_{\gamma}\vvvert \pt{\bw}_h\vvvert_{\gamma}.
\end{align}
\indent
Similar analysis can be done for the other term for the jump-average part. Hence
\begin{align}\label{jump_av_add_subtract5_add} 
|III|
\lesssim
h^k \vvvert \pt{\bu}_h\vvvert_{\gamma}\vvvert \pt{\bw}_h\vvvert_{\gamma}.
\end{align}
As a result, from \eqref{comparison_0013}, \eqref{G_bound_jumps}, \eqref{jump_av_add_subtract5}, and \eqref{jump_av_add_subtract5_add} we obtain the upper bound \eqref{eq:G_bound_mod}.

\vspace{.2cm}
\noindent {\bf Proof of \eqref{G_bound}: Geometric error bound for functions in $\bV_h$.}  \eqref{G_bound} follows directly from \eqref{eq:G_bound_mod} by employing \eqref{eq:triplebarequiv} (equivalence of the $DG$ and $\vvvert \cdot \vvvert$ norms on $\bV_h$).

\vspace{.2cm}
\noindent {\bf Proof of \eqref{eq:GL2est}: Geometric error bound for smooth functions.}   First, by \cite[Lemma 5.2]{DN26} there holds
\begin{equation}
\label{eq:geovol}
\begin{aligned}
2\int_{\gamma} \mathrm{Def}_{\gamma} \bu 
:\mathrm{Def}_{\gamma}\bw
& + \int_{\gamma} \bu\cdot \bw
- \left (2\int_{\Gamma_h^k}\mathrm{Def}_{\Gamma_h^k}\ipt \bu
:\mathrm{Def}_{\Gamma_h^k}\ipt \bw
+
\int_{\Gamma_h^k} \ipt\bu \cdot \ipt \bw \right) 
\\ & \lesssim h^{k+1} \|\bu\|_{H^2(\gamma)}\|\bw\|_{H^2(\gamma)}.
\end{aligned}
\end{equation}
Also, $j_\gamma(\bu, \bw)=0$ because $\bu, \bw \in {\bf HT}^1(\gamma)$.  It is thus left to bound $j_{\Gamma_h^k}(\ipt \bu, \ipt \bw)$.  Let $e=K_1 \cap K_2$.  Using \eqref{eq:pioladef} and \eqref{eq:mudef}, we compute that on $e$ 
\begin{equation}
\label{eq:jumpbound}
\begin{aligned}
 | & [\ipt \bu]  \cdot \bt_h^k |   = \left | (\ipt \bu_{K_1}-\ipt \bu_{K_2}) \cdot \bt_h^k \right |
\\ & = \Big|\Big ((1-d\kappa_1)(1-d\kappa_2) ( (\bnu \cdot \bnu_{h1}-\bnu \cdot \bnu_{h2})\bI
(\bnu \otimes \bnu_{h1}-\bnu \otimes \bnu_{h2}) ) [\bI - d\bH]^{-1} \bu \Big ) \cdot \bt_h^k \Big |
\\ & \le \left | (1-d\kappa_1)(1-d\kappa_2)  (\bnu \cdot \bnu_{h1}-\bnu \cdot \bnu_{h2}) (\bt_h^k)^\top  [\bI - d\bH]^{-1} \bu \right | 
\\ & ~~~~~+ \left |(1-d\kappa_1)(1-d\kappa_2) (\bt_h^k)^\top (\bnu-\bnu_{h1}) \otimes (\bnu_{h1}-\bnu_{h2}) [\bI - d\bH]^{-1} \bu \right |
\\ & \lesssim h^{2k} |\bu|.
\end{aligned}
    \end{equation}
Here we have used $|\bt_h^k\cdot \bnu |= |\bt_h^k \cdot (\bnu-\bnu_{h1})| \lesssim h^{k}$, $|\bnu \cdot \bnu_{h1}-\bnu \cdot \bnu_{h2}| \le |\bnu \cdot \bnu_{h1}-1| + |\bnu\cdot \bnu_{h2} -1| =\frac{1}{2} (|\bnu-\bnu_{h1}|^2+ |\bnu-\bnu_{h2}|^2) \lesssim h^{2k}$,  and \eqref{eq:geoest}.  Taking edgewise norms as needed and employing Cauchy-Schwarz, a scaled trace inequality, and norm equivalence to each term in $j_{\Gamma_h^k} (\ipt \bu, \ipt \bw)$ then yields 
\begin{equation}
\begin{aligned}
| j_{\Gamma_h^k} &(\ipt \bu, \ipt \bw)|  \lesssim  \sum_{e \in \calE}  \left [ \|\{{\rm Def}_{\Gamma_h^k} \ipt \bu \}_{\Gamma_h^k}\|_{L_2(e)} h^{2k} \| \bw^\ell\|_{L_2(e)} + h^{2k}\|\bu^\ell\|_{L_2(e)} \|\{{\rm Def}_{\Gamma_h^k} \ipt \bw \}_{\Gamma_h^k}\|_{L_2(e)} \right ]  
\\ &  ~~~~ +\frac{\rho}{h} \sum_{e \in \calE} h^{2k}\|\bu^\ell\|_{L_2(e)} h^{2k}\|\bw^\ell\|_{L_2(e)}
\\ & \lesssim h^{2k} \sum_{e \in \calE} \Big [(h^{-1/2} \|\bu^\ell\|_{H_h^1(K_{1e} \cup K_{2e})} + h^{-1/2} \|\ipt \bu\|_{H_h^1(K_{1e} \cup K_{2e})} +h^{1/2}\|\ipt \bu\|_{H_h^2(K_{1e} \cup K_{2e})}) 
\\ & ~~~\cdot(h^{-1/2} \|\bw^\ell\|_{H_h^1(K_{1e} \cup K_{2e})} + h^{-1/2} \|\ipt \bw \|_{H_h^1(K_{1e} \cup K_{2e})} +h^{1/2}\|\ipt \bw\|_{H_h^2(K_{1e} \cup K_{2e})}) \Big ] 
\\ & \lesssim h^{2k-1} \|\bu\|_{H^2(\gamma)} \|\bw\|_{H^2(\gamma)}.
\end{aligned}
\end{equation}
Combining this bound with \eqref{eq:geovol} completes the proof of \eqref{eq:GL2est} when $k \ge 2$ after noting that in this case $2k-1 \ge k+1$.  

\vspace{.2cm}
\noindent{\bf Proof of \eqref{eq:GL2est} when $k=1$.}  When $k=1$ the proof of \eqref{eq:GL2est} is significantly more involved, so we divide it into several steps. Let $\calE \ni e=K_1 \cap K_2 \subset \bar \Gamma_h$.  We denote by $\bn_{h1}$ and $\bn_{h2}$ the outward unit cornomals to $K_1, K_2$ on $e$ (so that $\bn_{h1} \cdot \bn_{h2}<0$) and let $\bt_h$ denote the unit tangent on $e$ pointing in the counterclockwise direction on $K_1$.  We also assume that $\bn \cdot \bn_{h1}>0$.  We additionaly define $\beta(e), \beta(\gamma)$ as generic quantities satisfying
\begin{equation}
\label{eq:betadef}
|\beta(e)| \lesssim  h^2 \|\bu\|_{H^2(K_1^\gamma \cup K_2^\gamma)} \|\bw\|_{H^2(K_1^\gamma \cup K_2^\gamma)}, 
 \hspace{.5cm} |\beta(\gamma)| \lesssim  h^2 \|\bu\|_{H^2(\gamma )} \|\bw\|_{H^2(\gamma)}.
    \end{equation}
Note that $\sum_{e \in \calE} |\beta(e)| \lesssim |\beta(\gamma)|$ due to Cauchy-Schwarz and finite overlap of element pairs, and that bounding $|j_{\Gamma_h^k} (\ipt \bu, \ipt \bw)|$ by $|\beta(\gamma)|$ yields the desired result \eqref{eq:GL2est} by the second inequality in \eqref{eq:betadef}.  

\noindent {\bf Step 1:  Bound the jump-jump terms.}  Using \eqref{eq:jumpbound}, a scaled trace inequality \eqref{eq:scaledtrace}, and norm equivalence yields
\begin{equation}
\frac{\rho}{h} \sum_{e \in \calE} \int_e [\ipt \bu]_{\Gamma_h^k} \cdot [\ipt \bw]_{\Gamma_h^k} \lesssim h^{4k-2} \|\bu\|_{H_h^1(\gamma)}\|\bw\|_{H_h^1(\gamma)} \lesssim h^2 \|\bu\|_{H_h^1(\gamma)}\|\bw\|_{H_h^1(\gamma)} \lesssim |\beta(\gamma)|.
\end{equation}
In the last step we have used $4k-2=2$ when $k=1$.  

\noindent {\bf Step 2:  Reduce the average-tangential jump terms to average-full jump terms.}
 On $e$ we have 
\begin{equation}
   [\ipt \bw] = (\ipt \bw_1-\ipt \bw_2) = [\ipt \bw]_{\Gamma_h^k} + (\ipt \bw \cdot \bn_{h1}\bn_{h1} -\ipt \bw \cdot \bn_{h2} \bn_{h2}).
\end{equation}
Next we note that 
\begin{equation}
\begin{aligned}
\Pi_{h1} (\bn_{h1}+ \bn_{h2}) & =\bn_{h1} + \Pi_{h1} ( \bn_{h2} \cdot \bnu_{h1} \bnu_{h1} + \bn_{h2} \cdot \bt_{h} \bt_h + \bn_{h2} \cdot \bn_{h1} \bn_{h1})
\\ & = (1+\bn_{h1} \cdot \bn_{h2}) \bn_{h1} \lesssim h^2,
\end{aligned}
\end{equation}
and similarly $\Pi_{h2} (\bn_{h1} + \bn_{h2}) = (1+\bn_{h1} \cdot \bn_{h2}) \bn_{h2}$.  Here we have used $\bn_{h2} \cdot \bt_h=0$, $\Pi_{h1} \bnu_{h1}=0$, and $1+\bn_{h1}\cdot \bn_{h2}=\frac{1}{2} |\bn_{h1} + \bn_{h2}|^2\lesssim h^2$. 
In addition, because $\ipt \bw \in H({\rm div},\bar \Gamma_h)$ there holds $\ipt \bw_1 \cdot \bn_{h1}+ \ipt \bw_2 \cdot \bn_{h2}=0$.  Thus 
\begin{equation}
\ipt \bw_1 \cdot \bn_{h1}\bn_{h1} -\ipt \bw_2 \cdot \bn_{h2} \bn_{h2}=\ipt \bw_1 \cdot \bn_{h1} (\bn_{h1}+\bn_{h2})= - \ipt \bw_2 \cdot \bn_{h2} (\bn_{h1} + \bn_{h2}).  
\end{equation}
Thus denoting by ${\rm Def}_{\Gamma_{hi}}$ the rate of strain tensor computed on $K_i$ and recalling that $\bPi_{hi} {\rm Def}_{\Gamma_{hi}} \ipt \bu_i={\rm Def}_{\Gamma_{hi}} \ipt \bu_i$, we have
\begin{equation}
\begin{aligned}
\int_e  & ({\rm Def}_{\bar \Gamma_{h1}} \ipt \bu_1 \cdot \bn_{h1} -{\rm Def}_{\bar \Gamma_{h2}} \ipt \bu_2 \cdot \bn_{h2}) (\ipt \bw_1 \cdot \bn_{h1}\bn_{h1} -\ipt \bw_2 \cdot \bn_{h2} \bn_{h2}) 
\\ & =  \int_{e} \Big [ {\rm Def}_{\bar \Gamma_{h1}} \ipt \bu_1 \cdot \bn_{h1} (\ipt \bw_1 \cdot \bn_{h1}) \bPi_{h1}( \bn_{h1} + \bn_{h2})
\\ & ~~~~+{\rm Def}_{\bar \Gamma_{h2}} \ipt \bu_2 \cdot \bn_{h2} (\ipt \bw_2 \cdot \bn_{h2}) \bPi_{h2} ( \bn_{h1} + \bn_{h2}) \Big ]
\\ & = (1+ \bn_{h1} \cdot \bn_{h2}) \int_e \left [ (\bn_{h1}^\intercal {\rm Def}_{\bar \Gamma_{h1}} \ipt \bu_1 \cdot \bn_{h1})( \ipt \bw_1 \cdot \bn_{h1})+ (\bn_{h2}^\intercal {\rm Def}_{\bar \Gamma_{h2}} \ipt \bu_2 \cdot \bn_{h2}) (\ipt \bw_2 \cdot \bn_{h2})  \right ]
 \\ & = (1+ \bn_{h1} \cdot \bn_{h2}) \int_e [\bn_{h1}^\intercal {\rm Def}_{\bar \Gamma_{h1}} \ipt \bu_1 \cdot \bn_{h1}-\bn_{h2}^\intercal {\rm Def}_{\bar \Gamma_{h2}} \ipt \bu_2 \cdot \bn_{h2}] \ipt \bw_1 \cdot \bn_{h1}.
\end{aligned}
\end{equation}
Using $1+\bn_{h1} \cdot \bn_{h2} \lesssim h^2$, $|\bn_{h1}+\bn_{h2}| + |\bn_{h1}-\bn| \lesssim h$, \eqref{eq:defequiv}, a scaled trace inequality, and norm equivalence we find that 
\begin{equation}
\label{eq:edgetobulk}
\begin{aligned}
    \int_e  \{   {\rm Def}_{\bar \Gamma_h} \ipt \bu   \}_{\bar \Gamma_h}  (\ipt \bw_1 \cdot \bn_{h1}\bn_{h1} -\ipt \bw_2 \cdot \bn_{h2} \bn_{h2}) 
     \lesssim h^3  \int_e (|\ipt \bu|+|\nabla_{\bar \Gamma_h} \ipt \bu|) |\bw^\ell|
\lesssim   |\beta(e)| 
 \end{aligned}
\end{equation}
and thus
\begin{equation}
\left |\sum_{e \in \calE} \int_e  \{   {\rm Def}_{\bar \Gamma_h} \ipt \bu   \}_{\bar \Gamma_h} [\ipt \bw]_{\bar \Gamma_h} \right |
\lesssim \left |\sum_{e \in \calE} \int_e  \{   {\rm Def}_{\bar \Gamma_h} \ipt \bu   \}_{\bar \Gamma_h}[\ipt \bw] \right |   + |\beta(\gamma)|.
\end{equation}

\noindent {\bf Step 3:  Split the average-jump terms.}  Calculating as in \eqref{eq:jumpbound} yields 
\begin{equation}
[\ipt \bw] = (1-d\kappa_1)(1-d\kappa_2) \left [ (\bnu \cdot \bnu_{h1}-\bnu\cdot \bnu_{h2})\bI -(\bnu \otimes \bnu_{h1}-\bnu \otimes \bnu_{h2})\right ](\bI -d \bH)^{-1} \bw^\ell.    
\end{equation}
Noting that $|d| \lesssim h^2$, $|\bI-(\bI-d\bH)^{-1}|\lesssim h^2$,  $|\bPi_{hi} \left [ (\bnu \cdot \bnu_{h1}-\bnu\cdot \bnu_{h2})\bI -(\bnu \otimes \bnu_{h1}-\bnu \otimes \bnu_{h2})\right ] | \lesssim h$ for $i=1,2$, and calculating as in the last inequality in \eqref{eq:edgetobulk} yields
\begin{equation}
\begin{aligned}
\int_{e} &  \{ {\rm Def}_{\bar \Gamma_h} \ipt \bu \cdot \bn_{h}^1 \}_{\bar \Gamma_h} \cdot [\ipt \bw] 
\\ & =
 \frac{1}{2} \int_e ({\rm Def}_{\bar \Gamma_{h1}} \ipt \bu_1 \cdot \bn_{h1} -{\rm Def}_{\bar \Gamma_{h2}} \ipt \bu_2 \cdot \bn_{h2}) 
 \\ & ~~~~~~~~\cdot \left [ (\bnu \cdot \bnu_{h1}-\bnu\cdot \bnu_{h2})\bI -(\bnu \otimes \bnu_{h1}-\bnu \otimes \bnu_{h2})\right ] \bw^\ell \\ & ~~~~~ +  \beta(e)
\\ & = \frac{1}{2} (I_e-II_e + III_e) +  \beta(e),
\end{aligned}
\end{equation}
where 
\begin{equation}
\begin{aligned}
    I_e &  = \int_e ({\rm Def}_{\bar \Gamma_{h1}} \ipt \bu_1 \cdot \bn_{h1} -{\rm Def}_{\bar \Gamma_{h2}} \ipt \bu_2 \cdot \bn_{h2}) (\bnu\cdot \bnu_{h1}-\bnu \cdot \bnu_{h2}) \bw^\ell,
    \\ II_e & = \int_e \left [ (  {\rm Def}_{\bar \Gamma_{h1}} \ipt \bu_1 \cdot \bn_{h1}) \bnu \otimes \bnu_{h1} +({\rm Def}_{\bar \Gamma_{h2}} \ipt \bu_2 \cdot \bn_{h2}) \bnu \otimes \bnu_{h2}) \right ] \bw^\ell,
    \\ III_e & = \int_e \left [ ({\rm Def}_{\bar \Gamma_{h1}} \ipt \bu_1 \cdot \bn_{h1} ) \bnu \otimes \bnu_{h2} +({\rm Def}_{\bar \Gamma_{h2}} \ipt \bu_2 \cdot \bn_{h2}) \bnu \otimes \bnu_{h1}  \right ]\bw^\ell.
    \end{aligned}
\end{equation}

In the last three steps we complete the proof by bounding $\sum_{e \in \calE} I_e$, $\sum_{e \in \calE} II_e$, and $\sum_{e \in \calE} III_e$ by $h^2 \|\bu\|_{H^2(\gamma)} \|\bw\|_{H^2(\gamma)}$. 

{\bf Step 4:  Bound $\sum_{e \in \calE} I_e$.}
We first compute using $|1-\bnu\cdot \bnu_{hi}|\le h^2$, \eqref{eq:defequiv},  $|1-\mu_e| \le h^2$ and a scaled trace inequality that
\begin{equation}
\begin{aligned}
    I_e & = \int_e ({\rm Def}_{\bar \Gamma_{h1}} \ipt \bu_1 \cdot \bn_{h1} -{\rm Def}_{\bar \Gamma_{h2}} \ipt \bu_2 \cdot \bn_2) (\bnu\cdot \bnu_{h1}-1) \bw^\ell 
    \\ & ~~~~~- \int_e ({\rm Def}_{\bar \Gamma_{h1}} \ipt \bu \cdot \bn_{h1} -{\rm Def}_{\bar \Gamma_{h2}} \ipt \bu_2 \cdot \bn_2) (\bnu \cdot \bnu_{h2}-1 )\bw^\ell 
    \\ & =  2\int_e ({\rm Def}_\gamma \bu \cdot \bn)^\ell (\bnu \cdot \bnu_{h1} -1) \bw^\ell - 2\int_e ({\rm Def}_\gamma \bu \cdot \bn)^\ell (\bnu \cdot \bnu_{h2} -1)\bw^\ell + \beta(e),
    \\ & = 2\int_{e^\gamma} ({\rm Def}_\gamma \bu \cdot \bn) (\bnu \cdot \bnu_{h1} -1) \bw - 2\int_{e^\gamma} ({\rm Def}_\gamma \bu \cdot \bn) (\bnu \cdot \bnu_{h2} -1)\bw + \beta(e).
    \end{aligned}
\end{equation}
    Thus integrating by parts (cf. Equation (3) and following of \cite{Fries18}), using $|1-\bnu\cdot \bnu_h| \lesssim h^2$, ${\bA} :[ {\bf a} \otimes {\bf b}] = {\bf a}^\intercal {\bA} {\bf b}$, $\bPi \bH \bPi = \bH$, and $|1-\mu_h|\lesssim h^2$ we find
    \begin{equation}
    \begin{aligned}
\sum_{e \in \calE} I_e &= 2 \sum_{K \in \calT_{h}^1} \int_{\partial K^\gamma}  ({\rm Def}_\gamma \bu \cdot \bn) (\bnu \cdot \bnu_h -1) \bw + \beta(\gamma) 
\\ & = 2 \Big ( \int_{\gamma} ({\rm Div}_{\gamma} {\rm Def}_\gamma \bu) \cdot \bw (\bnu \cdot \bnu_h-1) +\int_{\gamma} [{\rm Def}_\gamma \bu :\nabla_\gamma \bw ](\bnu\cdot \bnu_h -1) 
\\ & ~~~~~ + \int_\gamma {\rm Def}_\gamma \bu : \left [ \bPi \left ( \bw \otimes \nabla (\bnu\cdot \bnu_h) \right ) \bPi \right ]  \Big ) + \beta(\gamma)
\\ & = 2\int_\gamma {\rm Def}_\gamma \bu :  (\bw \otimes (\bH \bnu_h) ) + \beta(\gamma)
\\ & = 2 \int_\gamma    \bw^\intercal ({\rm Def}_\gamma \bu) \bH \bnu_h  + \beta(\gamma)
\\ & = 2\int_{\bar \Gamma_h}  [\bH^\ell ({\rm Def}_\gamma \bu)^\ell \bw^\ell] \cdot ( \bPi \bnu_h )+\beta(\gamma).  
\end{aligned}
    \end{equation}
We now recall \cite[Lemma 4.2]{HansboLarsonLarsson20} (cf. \cite{WZPP} for generalizations), which after slight modification yields
\begin{equation}
\label{eq:HLL}
    \left |\int_{\bar \Gamma_h} \bz \cdot \bPi \bnu_h \right | + \left | \int_{\bar \Gamma_h} \bz \cdot \bPi_h \bnu \right | \lesssim h^2 \|\bz\|_{W_1^1(\bar \Gamma_h)}, ~~~\bz \in [W_1^1(\bar \Gamma_h)]^3.
\end{equation}
Applying this result along with Cauchy-Schwarz and norm equivalence yields 
\begin{equation}
\left |2\int_{\bar \Gamma_h}  [\bH^\ell ({\rm Def}_\gamma \bu)^\ell \bw^\ell] \cdot ( \bPi \bnu_h ) \right | \lesssim |\beta(\gamma)|,
\end{equation}
which completes the proof that 
\begin{equation}
\sum_{e \in \calE} I_e \lesssim h^2 \|\bu\|_{H^2(\gamma)} \|\bw\|_{H^2(\gamma)}.
    \end{equation}

\noindent {\bf Step 5:  Bound for $\sum_{e \in \calE} II_e$.}  
Using \eqref{eq:vecinterp}, $|\bPi_{\bar \Gamma_{hi}} \bnu  |\lesssim h$, and $|\bw^\ell \cdot \bnu_{hi}|  = |\bw^\ell \cdot (\bnu_{hi}-\bnu)| \lesssim h|\bw^\ell|$, we find that
\begin{equation}
\int_e ({\rm Def}_{\bar\Gamma_{hi}} \ipt \bu_i \cdot \bn_{hi}) ( \bnu \otimes \bnu_{hi}) \bw^\ell= \int_{e} \left ( ({\rm Def}_{\bar \Gamma_{hi}} (I_h (\ipt \bu))_i \cdot \bn_{hi}) (\bw^\ell \cdot \bnu_{hi}) \right ) \cdot ( \bPi_{hi} \bnu) + \beta(e).
\end{equation}
Summing over edges and integrating by parts while recalling that ${\rm Def}_{\bar \Gamma_h} (I_h (\ipt \bu))_i$ is constant, we find that
\begin{equation}
\begin{aligned}
& \sum_{e \in \calE}  II_e  = \sum_{K \in \calT_h^1} \int_{\partial K} ({\rm Def}_{\bar \Gamma_h} I_h (\ipt \bu )\cdot \bn_h^1) (\bw^\ell \cdot \bnu_h) \bnu+ \beta(\gamma)
\\ & = \int_{\bar \Gamma_h} ({\rm Def}_{\bar \Gamma_h} I_h (\ipt \bu)): \nabla_{\bar \Gamma_h} [(\bw^\ell \cdot \bnu_h) \bnu] + \beta(\gamma)
\\ & = \int_{\bar \Gamma_h} \left [ ({\rm Def}_{\bar \Gamma_h} I_h (\ipt \bu)):(\bPi_h\bH \bPi_h) (\bw^\ell \cdot \bnu_h)  +({\rm Def}_{\bar \Gamma_h} I_h (\ipt \bu)) :(\bPi_h \bnu) \otimes ( \bPi_h \nabla (\bw^\ell)^\intercal \bnu_h ) \right ] 
\\ & ~~~~+ \beta(\gamma)
\\ & =: II_1+II_2 + \beta(\gamma).
    \end{aligned}
\end{equation}
Next noting that $\bPi \bH = \bH$, $|\bPi_h - \bPi| \lesssim h$ and $| \bw^\ell \cdot \bnu_h| = |\bw^\ell \cdot \bPi \bnu_h| \lesssim h|\bw^\ell|$ while applying \eqref{eq:vecinterp} and \eqref{eq:defequiv} yields
\begin{equation}
    II_1=  \int_{\bar \Gamma_h} ((( {\rm Def}_{\gamma} \bu)^\ell:\bH )\bw^\ell ) \cdot (\bPi \bnu_h) +\beta(\gamma)
\end{equation}
Applying \eqref{eq:HLL} and the Cauchy-Schwarz inequality, we obtain
\begin{equation}
|II_1| \lesssim \beta(\gamma).
\end{equation}
Employing ${\bA} :[ {\bf a} \otimes {\bf b}] = {\bf a}^\intercal {\bA} {\bf b}$, $\bPi_h {\rm Def}_{\bar \Gamma_h} I_h (\ipt \bu) \bPi_h = {\rm Def}_{\bar \Gamma_h} I_h (\ipt \bu)$, \eqref{eq:vecinterp}, \eqref{eq:defequiv}, $|\bnu-\bnu_h|+ |\bPi_h \bnu| \lesssim h$, and \eqref{eq:HLL} yields
\begin{equation}
\begin{aligned}
   | II_2 | & = \left | \int_{\bar \Gamma_h}  \left [ {\rm Def}_{\bar \Gamma_h} I_h (\ipt \bu) \nabla (\bw^\ell)^\intercal\bnu_h)\right ] \cdot \bPi_h \bnu \right |
    \\ & = \left | \int_{\bar \Gamma_h} \left [  ({\rm Def}_{\gamma} u)^\ell (\nabla \bw^\ell)^\intercal \bnu \right] \cdot (\bPi_h \bnu) + \beta(\gamma) \right | 
    \\ & \lesssim |\beta(\gamma)|. 
    \end{aligned}
\end{equation}

\noindent {\bf Step 6:  Bound for $\sum_{e \in \calE} III_e$.}  Using \eqref{eq:defequiv}, $|\bPi_{hi} \bnu||\bPi \bnu_{hj}| \lesssim h^2$ and $|\bn_{h1}-\bn|+|\bn_{h1}+ \bn_{h2}| \lesssim h$, we have
\begin{equation} 
\label{eq:IIIstart}
\begin{aligned}
III_e  & = \int_e \left [ ({\rm Def}_{\bar \Gamma_{h1}} \ipt \bu_1 \cdot \bn_{h1} ) (\bPi_{h1} \bnu) \otimes (\bPi \bnu_{h2}) + ({\rm Def}_{\bar \Gamma_{h2}} \ipt \bu_2 \cdot \bn_{h2}) (\bPi_{h2} \bnu) \otimes (\bPi \bnu_{h1})  \right ]\cdot \bw^\ell
\\ & = \int_e ({\rm Def}_\gamma \bu \cdot \bn)^\ell [(\bPi_{h1} \bnu) \otimes (\bPi \bnu_{h2}) - (\bPi_{h2} \bnu) \otimes (\bPi \bnu_{h1})] \cdot \bw^\ell + \beta(e).
\end{aligned}
\end{equation}
Using $|\bPi_{hi} \bnu+ \bPi \bnu_{hi}| \lesssim h^2$ from \cite{HansboLarsonLarsson20} and then rearranging terms, we have that
\begin{equation}
\begin{aligned}
(\bPi_{h1} \bnu) & \otimes (\bPi \bnu_{h2})  - (\bPi_{h2} \bnu) \otimes (\bPi \bnu_{h1})  \\ & = (\bPi \bnu_{h1}) \otimes (\bPi \bnu_{h2}) - (\bPi \bnu_{h2}) \otimes (\bPi \bnu_{h1}) +O(h^3)
\\ & = ( \bPi\bnu_{h1}-\bPi \bnu_{h2}) \otimes (\bPi \bnu_{h2}) -(\bPi \bnu_{h2}) \otimes (\bPi \bnu_{h1}-\bPi \bnu_{h2}) + O(h^3).    
\end{aligned}
\end{equation}
where $O(h^m)$ denotes a generic geometric error term bounded by $h^m$. Next note that because $\bnu_{hi} \cdot \bt_h=0$,
\begin{equation}
\begin{aligned}
\bPi(\bnu_{h1}-\bnu_{h2})& = (\bnu_{h1}-\bnu_{h2})-(\bnu\cdot \bnu_{h1}-\bnu\cdot \bnu_{h2})\bnu
\\ & = \bnu_{h1}-(\bnu_{h2} \cdot \bnu_{h1} \bnu_{h1}+ \bnu_{h2} \cdot \bn_{h1} \bn_{h1} + \bnu_{h2} \cdot \bt_h \bt_h)  -(\bnu\cdot \bnu_{h1}-\bnu\cdot \bnu_{h2})\bnu
\\ & = -\bnu_{h2} \cdot \bn_{h1} \bn_{h1} + (1-\bnu_{h2} \cdot \bnu_{h1}) \bnu_{h1} -(\bnu\cdot \bnu_{h1}-\bnu\cdot \bnu_{h2})\bnu
\\ & =  -\bnu_{h2} \cdot \bn_{h1} \bn_{h1} + O(h^2).
\end{aligned}
\end{equation}
Here we have also used that $|\bnu_{h1} \cdot \bnu_{h2}-1| + |\bnu \cdot \bnu_{hi}-1| \lesssim h^2$. 
Combining the previous two equations thus yields
\begin{equation}
\begin{aligned}
(\bPi_{h1} \bnu) \otimes (\bPi \bnu_{h2})  & - (\bPi_{h2} \bnu) \otimes (\bPi \bnu_{h1})
\\ & = - \bnu_{h2} \cdot \bn_{h1} \left [ \bn_{h1} \otimes (\bPi \bnu_{h1}) - (\bPi \bnu_{h1}) \otimes \bn_{h1} \right ] + O(h^3).
\end{aligned}
\end{equation}
Writing $\bPi \bnu_{h1} = \bnu_{h1}\cdot \bt \bt + \bnu_{h1} \cdot \bn \bn$, we have
\begin{equation}
\begin{aligned}
\bn_{h1} \otimes (\bPi \bnu_{h1}) & - (\bPi \bnu_{h1}) \otimes \bn_{h1}
\\ & = \bn_{h1} \otimes (\bnu_{h1}\cdot \bt \bt + \bnu_{h1} \cdot \bn \bn) - (\bnu_{h1}\cdot \bt \bt + \bnu_{h1} \cdot \bn \bn) \otimes \bn_{h1}
\\ & =  \bnu_{h1} \cdot \bt ( \bn_{h1} \otimes \bt -\bt \otimes \bn_{h1}) + \bnu_{h1} \cdot \bn ( \bn_{h1} \otimes \bn-\bn \otimes \bn_{h1}).
\end{aligned}
\end{equation}
Using $|\bnu_{h1} \cdot \bt| + |\bnu_{h2} \cdot \bn_{h1}| + |\bn-\bn_{h1}| \lesssim h$ then finally yields
\begin{equation}
\begin{aligned}
    (\bPi_{h1} \bnu) \otimes (\bPi \bnu_{h2}) &  - (\bPi_{h2} \bnu) \otimes (\bPi \bnu_{h1}) 
    \\ & = -( \bnu_{h2} \cdot \bn_{h1})( \bnu_{h1} \cdot \bt) (\bn_{h1} \otimes \bt-\bt \otimes \bn_{h1}) + O(h^3),
    \end{aligned}
\end{equation}
which after inserting into \eqref{eq:IIIstart} and employing a scaled trace inequality yields
\begin{equation}
III_e = -\bnu_{h2} \cdot \bn_{h1} \int_e ({\rm Def}_\gamma \bu \cdot \bn)^\ell (\bn_{h1} \otimes \bt-\bt \otimes \bn_{h1}) \bw^\ell (\bnu_{h1} \cdot \bt) + \beta(e).
\end{equation}

Next using \eqref{eq:tangent_conormal}, $\bPi \bnu_{hi} + \bPi_{hi} \bnu = O(h^2)$, and $\bnu=\nabla d$ we compute 
\begin{equation}
\begin{aligned}
    \bnu_{h1} \cdot \bt & = (\bPi  \bnu_{h1}) \cdot \bt = 
    (\bPi \bnu_{h1})\cdot (\bt-\bPi \bt_h) + (\bPi \bnu_{h1}) \cdot (\bPi \bt_h)
    \\ & = (\bPi \bnu_{h1})\cdot \bt_h + O(h^2)= (\bPi_{h1} \bnu) \cdot \bt_h + O(h^2) = \frac{\partial d}{\partial \bt_h} + O(h^2). 
    \end{aligned}
\end{equation}
Let $\bar \bw= \frac{1}{|K_1|} \int_{K_1} \bw^\ell$ so that $\|\bw^\ell -\bar \bw\|_{L_2(K_1)} \lesssim h \|\bw\|_{H^1(K_1^\gamma)}$.  Using $|\bt-\bt_h| \lesssim h$, \eqref{eq:defequiv}, and \eqref{eq:vecinterp}, we thus have that
\begin{equation}
\begin{aligned}
III_e & = -\bnu_{h2} \cdot \bn_{h1}  ({\rm Def}_{\bar \Gamma_h} I_h \ipt \bu_1 \cdot \bn_{h1}) (\bn_{h1} \otimes \bt_h - \bt_h \otimes \bn_{h1}) \bar \bw \int_e \frac{\partial d}{\partial \bt_h} + \beta(e)
\\ & = \bnu_{h2} \cdot \bn_{h1}  ({\rm Def}_{\bar \Gamma_h} I_h \ipt \bu_1 \cdot \bn_{h1}) (\bn_{h1} \otimes \bt_h - \bt_h \otimes \bn_{h1}) \bar \bw   (d(z_1)-d(z_0)) + \beta(e),
\end{aligned}
\end{equation}
where $z_1$ and $z_2$ are the vertices of $e$.  Recalling that $\bn_{h2} \cdot \bnu_{h1}=O(h)$ and using the assumption that $d(v_1)-d(v_0)=O(h^3)$ along with \eqref{eq:defequiv} and \eqref{eq:vecinterp}, a scaled trace inequality, and $\|\bw^\ell -\bar \bw\|_{L_2(K_1)} \lesssim h \|\bw\|_{H^1(K^\gamma)}$, we thus have that
\begin{equation}
\begin{aligned}
|III_e| & \lesssim h^4 |({\rm Def}_{\bar \Gamma_h} I_h \ipt \bu_1 \cdot \bn_{h1}) (\bn_{h1} \otimes \bt_h - \bt_h \otimes \bn_{h1}) \bar \bw| + |\beta_e|
\\ & \lesssim h^3 \int_e |({\rm Def}_{\gamma} \bu)^\ell| |\bar \bw| + |\beta(e)|
\\ & \lesssim |\beta(e)|.
\end{aligned}
\end{equation}
Summing over the edges completes the proof that
\begin{equation}
|III_e| \lesssim h^2 \|\bu \|_{H^2(\gamma)}\|\bw\|_{H^2(\gamma)}
\end{equation}
when $k=1$, and thus of Theorem \ref{geotheorem}.
\end{proof}

\section{Finite element method and error analysis}
\label{sec:errors}

In this section we define finite element approximations to the surface Stokes problem, prove stability results for the associated bilinear forms, and finally establish optimal error estimates in the energy norm.  

\subsection{Weak form and finite element method}
The weak form of the surface Stokes problem is:  Find $(\bu, p) \in {\bf HT}^1(\gamma) \times L_2^0(\gamma)$ such that
\begin{equation}
\label{eq:weakform}
\begin{aligned}
a_\gamma (\bu, \bw) -b_\gamma (\bw, p) & = ({\bf f}, \bw)_\gamma \hbox{ for all } \bw \in {\bf HT}^1(\gamma),
\\ b_\gamma (\bu, q) & = 0 \hbox{ for all } q \in L_2^0(\gamma).
\end{aligned}
\end{equation}
Recall that the form $a_\gamma$ includes the jump form $j_\gamma$.  However, $j_\gamma(\bu, \bw)=0$ when $\bu, \bw \in {\bf HT}^1(\gamma)$, so \eqref{eq:weakform} reduces to a standard weak form.  In addition, \eqref{eq:weakform} additionally holds for $\bw=\pt{\bw}_h$ with $\bw_h \in \bV_h$.  This follows from multiplying the strong form \eqref{eq:surfstokes} by $\pt \bw_h$ and integrating by parts elementwise while recalling that $[\bu]_\gamma=0$ for $\bu \in {\bf HT}^1(\gamma)$.  This relationship also uses that since $\pt \bw_h \in \pt \bV_h \subset H({\rm div}, \gamma)$ there holds on edges $e^\gamma$ that $[\pt \bw_h]=[\pt \bw_h]\cdot \bt_\gamma \bt_\gamma+ [\pt \bw_h] \cdot \bn_\gamma \bn_\gamma = [\pt \bw_h]_\gamma$.    

A standard elliptic  shift theorem also holds for this problem.  In particular, if ${\bf f} \in [H^m(\gamma)]^3$ and $\gamma$ is sufficiently smooth, then $(\bu, p) \in [H^{m+2}(\gamma)]^3 \times H^{m+1}(\gamma)$ and
\begin{equation}
\label{eq:shifttheorem}
\|\bu\|_{H^{m+2}(\gamma)} + \|p\|_{H^{m+1}(\gamma)} \lesssim \|{\bf f}\|_{H^m(\gamma)}.
\end{equation}
This theorem is contained in \cite[Theorem 1.2]{BNSPP} for a Stokes operator employing a Bochner Laplacian in the elliptic term rather than the surface diffusion operator employed here.  As is pointed in \cite{BNSPP}, in the presence of the surface incompressiblity condition the two operators differ only by a zero-th order term and thus \eqref{eq:shifttheorem} may be easily derived for the surface Stokes problem as well.  

The corresponding finite element method is:  Find $(\bu_h, p_h) \in \bV_h \times Q_h^0$ such that
\begin{equation}
\label{eq:FEM}
\begin{aligned}
a_{\Gamma_h^k} (\bu_h, \bw_h) -b_{\Gamma_h^k} (\bw_h, p_h) & = ({\bf f}_h, \bw_h)_{\Gamma_h^k} \hbox{ for all } \bw_h \in \bV_h,
\\ b_{\Gamma_h^k} (\bu_h, q_h) & = 0 \hbox{ for all } q_h \in Q_h.
\end{aligned}
\end{equation}

Here ${\bf f}_h$ is chosen so that the functional $\bw_h \mapsto ({\bf f}_h, \bw_h)_{\Gamma_h^k}$ approximates the functional $\bw_h \mapsto ({\bf f}, \pt{\bw}_h)_\gamma$.  Note that ${\bf f}_h$ need not necessarily be tangent to $\Gamma_h^k$.  Following the discussion in \cite[Remark 5.3]{DN26}, let 
\begin{equation}
{\bf F}_h = [{\bf I}-d{\bf H} ]^{-1} \left [ {\bf I} - \frac{\bnu_h \otimes \bnu}{\bnu_h \cdot \bnu} \right ] \bPi_h {\bf f}_h.
\end{equation}
Then if $\bw \cdot \bnu_h=0$,
\begin{equation}
\int_{\Gamma_h^k} {\bf f}_h \cdot \bw = \int_\gamma {\bf F}_h^\ell \cdot \pt{\bw}.
\end{equation}
Choosing either ${\bf f}_h={\bf f}^\ell$ or ${\bf f}_h= \ipt{\bf f}$ then yields for $\bw \in L_2(\Gamma_h^k)$ with $\bw \cdot \bnu_h=0$ that
\begin{equation}
\label{eq:fapprox}
\int_\gamma {\bf f} \cdot \pt \bw -\int_{\Gamma_h^k} {\bf f}_h \cdot \bw = \int_\gamma ({\bf f}-{\bf F}_h^\ell) \cdot \pt{\bw} \lesssim h^{k+1} \|{\bf f}\|_{L_2(\Gamma)}\|\pt{\bw}\|_{L_2(\gamma)} \hbox{ and } \|{\bf f}_h\|_{L_2(\Gamma_h^k)} \lesssim \|{\bf f}\|_{L_2(\gamma)}.
\end{equation}
We assume below that \eqref{eq:fapprox} is satisfied.

\subsection{Discrete Inf-Sup Condition}

\begin{theorem}[Discrete inf-sup condition]
Assume that $h$ is sufficiently small.  Then there exists a constant $\alpha>0$ independent of $h$ such that
\begin{equation}
\label{eq:discinfsup}
\sup_{\bw_h \in \bV_h \setminus {\bf 0}} \frac{b_{\Gamma_h^k}(\bw_h, q_h)}{\|\bw_h\|_{DG, \Gamma_h^k}} \ge \alpha \|q_h\|_{L_2(\Gamma_h^k)}~~~ \forall q_h \in Q_h^0.
\end{equation}
\end{theorem}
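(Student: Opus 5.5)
We prove \eqref{eq:discinfsup} by a Fortin argument built on the continuous inf-sup condition on $\gamma$ and the weak commuting diagram property \eqref{eq:weakcdp}.

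Fix $q_h \in Q_h^0$. First we transfer it to $\gamma$. Set $\tilde q = q_h^\ell \mu_h^{\ell,-1} $ adjusted to have mean zero. More simply, note that $\int_\gamma q_h^\ell = \int_{\Gamma_h^k} q_h \mu_h$ and $|1-\mu_h|\lesssim h^{k+1}$. Hence $q^\gamma := q_h^\ell - |\gamma|^{-1}\int_\gamma q_h^\ell$ lies in $L_2^0(\gamma)$, and it satisfies $\|q^\gamma - q_h^\ell\|_{L_2(\gamma)} \lesssim h^{k+1}\|q_h\|_{L_2(\Gamma_h^k)}$ and $\|q^\gamma\|_{L_2(\gamma)}\simeq \|q_h\|_{L_2(\Gamma_h^k)}$ for $h$ small.

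By the classical surface inf-sup condition (surjectivity of ${\rm div}_\gamma:{\bf HT}^1(\gamma)\to L_2^0(\gamma)$, obtained for instance from solving a scalar Laplace–Beltrami problem $\Delta_\gamma\phi = q^\gamma$ and taking $\bv=\nabla_\gamma\phi$), there is $\bv\in{\bf HT}^1(\gamma)$ with ${\rm div}_\gamma \bv = q^\gamma$ and $\|\bv\|_{H^1(\gamma)}\lesssim \|q^\gamma\|_{L_2(\gamma)}$.

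Next define the discrete velocity $\bw_h := I_h \ipt\bv \in \bV_h$. By \eqref{eq:interp_stability} and \eqref{eq:dgequiv}, $\|\bw_h\|_{DG,\Gamma_h^k}\lesssim \|\pt{\bw_h}\|_{DG,\gamma}\lesssim \|\bv\|_{H^1(\gamma)}\lesssim\|q_h\|_{L_2(\Gamma_h^k)}$. For the numerator, we first use \eqref{eq:weakcdp}. This requires $\ipt\bv\in H({\rm div},\Gamma_h^k)\cap[H_h^1(\Gamma_h^k)]^3$, which holds since the inverse Piola transform preserves $H({\rm div})$ and, via \eqref{eq:pprops}, elementwise $H^1$. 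We then apply \eqref{eq:piola_property}, noting that $\pt{\ipt\bv}=\bv$:
\[
b_{\Gamma_h^k}(\bw_h,q_h)=b_{\Gamma_h^k}(\ipt\bv,q_h)=b_\gamma(\bv,q_h^\ell)=\int_\gamma q^\gamma q_h^\ell .
\]
Writing $q_h^\ell = q^\gamma + (q_h^\ell-q^\gamma)$, the right side is at least $\|q^\gamma\|_{L_2(\gamma)}^2 - Ch^{k+1}\|q_h\|^2_{L_2(\Gamma_h^k)}$, which is $\gtrsim \|q_h\|_{L_2(\Gamma_h^k)}^2$ for $h$ sufficiently small. An even simpler route exists: $\int_\gamma q^\gamma \cdot {\rm const}=0$, so the integral equals $\|q^\gamma\|^2$ exactly. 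Dividing by $\|\bw_h\|_{DG,\Gamma_h^k}$ gives \eqref{eq:discinfsup}.

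The main obstacle is justifying the use of the interpolant on $\ipt\bv$ with only $H^1$ regularity. One must check that $\ipt\bv$ has the regularity for which $I_h$ and \eqref{eq:weakcdp} are defined, and that the stability bound \eqref{eq:interp_stability} really only needs $\|\bv\|_{H^1(\gamma)}$. In particular, the jump part of the DG norm must be controlled using $[\pt{I_h\ipt\bv}]_\gamma=[\pt{I_h\ipt\bv}-\bv]_\gamma$ together with the $\ell=0,1$, $m=1$ cases of \eqref{eq:vecinterp} and the scaled trace inequality. A secondary point is the mean-value correction: $q_h^\ell$ need not have zero mean on $\gamma$, and the perturbation argument above handles this.
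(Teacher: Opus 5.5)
Your proposal is correct and is essentially the paper's argument: both apply $I_h\ipt{\cdot}$ to a continuous velocity supplied by the surface inf-sup condition, and both combine \eqref{eq:interp_stability}, \eqref{eq:weakcdp}, \eqref{eq:piola_property} and \eqref{eq:dgequiv} with a mean-value correction based on $|1-\mu_h|\lesssim h^{k+1}$ and $\int_{\Gamma_h^k}q_h=0$. The differences are cosmetic: you build an explicit right inverse $\nabla_\gamma\phi$ of ${\rm div}_\gamma$ and get the exact identity $b_{\Gamma_h^k}(I_h\ipt{\nabla_\gamma\phi},q_h)=\|q^\gamma\|_{L_2(\gamma)}^2$, whereas the paper uses the continuous inf-sup condition in supremum form and reabsorbs the mean of $q_h^\ell$ via $\|1-\mu_h^{-1}\|_{L_\infty}\le\frac12$.
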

\begin{proof} \hspace{.1cm}
Assume that $h$ is small enough to guarantee that $\|1-(\mu_h^k)^{-1}\|_{L_\infty(\Gamma_h^k)} \le \frac{1}{2}$.  For $q_h \in Q_h^0$, we have $\frac{1}{|\gamma|} \int_\gamma q_h^\ell  = \frac{1}{|\gamma|} \int_\gamma (1-(\mu_h^k)^{-1}) q_h^\ell \le \frac{\|1-(\mu_h^k)^{-1}\|_{L_\infty(\Gamma_h^k)}}{|\gamma|^{1/2}} \|q_h^\ell\|_{L_2(\gamma)}$ and so $\|\frac{1}{|\gamma|} \int_\gamma q_h^\ell\|_{L_2(\gamma)}  \le \|1-(\mu_h^k)^{-1} \|_{L_\infty(\Gamma_h^k)} \|q_h^\ell\|_{L_2(\gamma)} \le \frac{1}{2} \|q_h^\ell \|_{L_2(\gamma)}.$  Employing the triangle inequality and reabsorbing the resulting term thus yields
\begin{equation}
\label{eq:1}
\|q_h^\ell\|_{L_2(\gamma)} \lesssim \left \|q_h^\ell-    \frac{1}{|\gamma|} \int_\gamma q_h^\ell \right \|_{L_2(\gamma)}.          
\end{equation}                                  
            
Using $b_\gamma(\bw, C)=0$ for any constant $C$,  the continuous inf-sup condition from \cite{JankuhnEtal18}, \eqref{eq:interp_stability}, \eqref{eq:piola_property}, \eqref{eq:weakcdp}, and \eqref{eq:dgequiv}, we next compute for $q_h \in Q_h \cap L_2^0(\gamma_h^k)$ that
\begin{equation}
\begin{aligned} 
\left \|q_h^\ell-    \frac{1}{|\gamma|} \int_\gamma q_h^\ell \right  \|_{L_2(\gamma)} & \lesssim \sup_{\bw \in {\bf HT}^1(\gamma) \setminus 0} \frac{b_\gamma(\bw, q_h^\ell)}{\|\bw\|_{H^1(\gamma)} }
 \lesssim \sup_{\bw \in {\bf HT}^1(\gamma) \setminus 0} \frac{b_\gamma(\bw, q_h^\ell)}{\|\pt {I_h \ipt{\bw}}\|_{DG,\gamma}}
\\ & = \sup_{\bw \in {\bf HT}^1(\gamma) \setminus 0} \frac{b_{\Gamma_h^k} (I_h \ipt{\bw}, q_h)}{\|\pt {I_h \ipt{\bw}}\|_{DG,\gamma}}
 \lesssim \sup_{\bw_h \in \bV_h \setminus 0} \frac{b_{\Gamma_h^k}(\bw_h, q_h)}{\|\bw_h\|_{DG,\Gamma_h^k}}.
\end{aligned}
\end{equation}
Combining this result with \eqref{eq:1} and norm equivalence yields the desired estimate.
\end{proof}

\subsection{Coercivity and continuity}
We first establish coercivity of the bilinear form $a_{\Gamma_h^k}$ with respect to the norm $\|\cdot \|_{DG, \Gamma_h^k}$.  
\begin{lemma}  Assuming the penalty parameter $\rho>0$ is sufficiently large (independent of $h$), there holds for $\bu_h \in \bV_h$
\begin{equation} \label{eq:coercivity}
a_{\Gamma_h^k} (\bu_h, \bu_h) \gtrsim \|\bu_h\|_{DG, \Gamma_h^k}^2.
\end{equation}
\end{lemma}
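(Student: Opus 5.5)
The argument is the standard symmetric interior penalty one, run directly on $\Gamma_h^k$. The only nonstandard ingredients are the discrete Korn inequality \eqref{eq:korndiscsurf} and the inverse estimate \eqref{eq:discinverse}.

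Setting $\bw_h=\bu_h$ in \eqref{eq:ahdef}--\eqref{eq:jdef} gives
\begin{equation*}
a_{\Gamma_h^k}(\bu_h,\bu_h) = 2\|{\rm Def}_{\Gamma_h^k}\bu_h\|_{L_2(\Gamma_h^k)}^2 + \|\bu_h\|_{L_2(\Gamma_h^k)}^2 - 4\int_{\varSigma}\{{\rm Def}_{\Gamma_h^k}\bu_h\}_{\Gamma_h^k}\cdot[\bu_h]_{\Gamma_h^k} + \frac{2\rho}{h}\sum_{e\in\calE}\|[\bu_h]_{\Gamma_h^k}\|_{L_2(e)}^2 .
\end{equation*}
Everything except the consistency term is already a multiple of a piece of $\|\bu_h\|_{DG,\Gamma_h^k}^2$, so the work is in controlling that term.

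On each edge $e=K_1\cap K_2$ we have $|\{{\rm Def}_{\Gamma_h^k}\bu_h\}_{\Gamma_h^k}|\le \frac12(|{\rm Def}_{\Gamma_h^k}\bu_{h1}|+|{\rm Def}_{\Gamma_h^k}\bu_{h2}|)$, because the conormals have unit length. We apply the scaled trace inequality \eqref{eq:scaledtrace} to ${\rm Def}_{\Gamma_h^k}\bu_h$ on each $K_i$, and then \eqref{eq:discinverse}, to obtain
\begin{equation*}
h\|\{{\rm Def}_{\Gamma_h^k}\bu_h\}_{\Gamma_h^k}\|_{L_2(e)}^2 \lesssim \|{\rm Def}_{\Gamma_h^k}\bu_h\|_{L_2(K_1\cup K_2)}^2 + h^2\|{\rm D}_{\Gamma_h^k}{\rm Def}_{\Gamma_h^k}\bu_h\|_{L_2(K_1\cup K_2)}^2 \le C\|\bu_h\|_{H_h^1(K_1\cup K_2)}^2 .
\end{equation*}
Here $C$ is independent of $\rho$ and $h$. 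This works because the scaled trace and inverse estimates depend only on shape regularity and on \eqref{eq:pprops}, \eqref{eq:piolaprops}, and never on $\rho$.

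Summing over edges with finite overlap, and then applying Cauchy--Schwarz and Young's inequality with a parameter $\epsilon$, gives
\begin{equation*}
4\Big|\int_{\varSigma}\{{\rm Def}_{\Gamma_h^k}\bu_h\}_{\Gamma_h^k}\cdot[\bu_h]_{\Gamma_h^k}\Big| \le \epsilon\|\bu_h\|_{H_h^1(\Gamma_h^k)}^2 + \frac{C}{\epsilon h}\sum_{e}\|[\bu_h]_{\Gamma_h^k}\|_{L_2(e)}^2 .
\end{equation*}

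The main obstacle is that the bulk part of $a_{\Gamma_h^k}$ controls only ${\rm Def}_{\Gamma_h^k}\bu_h$, while the average term is naturally bounded by the full broken $H^1$ norm. Here we invoke \eqref{eq:korndiscsurf}, but in its unweighted form. From \eqref{eq:kornnorho}, transferred to $\Gamma_h^k$ exactly as in the proof of the Corollary (using \eqref{eq:jumpequivs}, \eqref{eq:defequiv}, \eqref{eq:hnormequiv}), there is a $\rho$-independent constant $C_K$ such that
\begin{equation*}
\|\bu_h\|_{H_h^1(\Gamma_h^k)}^2 \le C_K\Big(\|{\rm Def}_{\Gamma_h^k}\bu_h\|_{L_2}^2 + \|\bu_h\|_{L_2}^2 + h^{-1}\sum_e\|[\bu_h]_{\Gamma_h^k}\|_{L_2(e)}^2\Big).
\end{equation*}
It is important to use this $\rho$-free version; otherwise the constants would depend on $\rho$ and the absorption argument below would be circular. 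The $h^k$ perturbation in the Corollary's proof is absorbed for $h$ small, without reference to $\rho$.

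Now choose $\epsilon = 1/(2C_K)$. The consistency term is then bounded by
\begin{equation*}
\tfrac12\|{\rm Def}_{\Gamma_h^k}\bu_h\|^2 + \tfrac12\|\bu_h\|^2 + (C' / h)\sum_e\|[\bu_h]_{\Gamma_h^k}\|_{L_2(e)}^2,
\end{equation*}
where $C'$ is independent of $\rho$. Consequently
\begin{equation*}
a_{\Gamma_h^k}(\bu_h,\bu_h)\ge \tfrac32\|{\rm Def}_{\Gamma_h^k}\bu_h\|^2+\tfrac12\|\bu_h\|^2+\frac{2\rho-C'}{h}\sum_e\|[\bu_h]_{\Gamma_h^k}\|_{L_2(e)}^2 .
\end{equation*}
Taking $\rho\ge C'$ makes the last coefficient at least $\rho/h$, and this gives \eqref{eq:coercivity}.
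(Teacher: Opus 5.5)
Your proof is correct and follows essentially the same route as the paper. A scaled trace inequality plus an inverse estimate bounds $h\|\{{\rm Def}_{\Gamma_h^k}\bu_h\}_{\Gamma_h^k}\|_{L_2(\varSigma)}^2$ by the broken $H^1$ norm, and the discrete Korn inequality with $\rho$ set to $1$ (so its constant is $\rho$-independent) converts that into the ${\rm Def}$, $L_2$, and unweighted jump terms. Young's inequality with a fixed $\epsilon$, followed by taking $\rho$ large, then completes the absorption.

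The differences are cosmetic. The paper applies Korn before Young's inequality rather than after. It also disposes of the $h^k$ perturbation in the Korn transfer to $\Gamma_h^k$ by an inverse inequality rather than by taking $h$ small; neither change affects the argument.
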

\begin{proof} \hspace{.1cm}
Using the scaled trace inequality \eqref{eq:scaledtrace}, inverse estimates, the discrete Korn inequality \eqref{eq:korndiscsurf} with $\rho$ momentarily taken to be 1, and norm equivalence, we find that there is a constant $C_1$ independent of $\rho$ and $h$ such that 
\begin{equation}
\begin{aligned}
  h \|\{ {\rm Def}_{\Gamma_h^k} \bu_h\}_{\Gamma_h^k} \|_{L_2(\Sigma)}^2 & \le C (\|{\rm Def}_{\Gamma_h^k} \bu_h \|_{L_2(\Gamma_h^k)}^2 + h^2 |{\rm Def}_{\Gamma_h^k} \bu_h|_{H^1(\Gamma_h^k)}^2)  
  \le C \|\bu_h\|_{H_h^1(\Gamma_h^k)}^2 \\ & \le C_1 ( \|{\rm Def}_{\Gamma_h^k}\bu_h\|_{L_2(\Gamma_h^k)}^2 + \|\bu_h\|_{L_2(\Gamma_h^k)}^2 + h^{-1} \|[\bu_h]_{\Gamma_h^k}\|_{L_2(\Sigma)}^2).
\end{aligned}
\end{equation}
Using H\"older's inequality and Young's inequality, we thus compute that for any $\epsilon>0$ that
\begin{equation}
\begin{aligned}
\int_{\varSigma} & \{\mathrm{Def}_{\Gamma_h^k}\bu_h \}_{\Gamma_k^h}  
 \cdot [\bu_h]_{\Gamma_h^k}  \le h^{1/2} \|\{{\rm Def}_{\Gamma_h^k} \bu_h\}_{\Gamma_h^k} \|_{L_2(\Sigma)} h^{-1/2} \|[\bu_h]_{\Gamma_h^k}\|_{L_2(\varSigma)}
  \\& \le \frac{\epsilon}{2} \left (\|\mathrm{Def}_{\Gamma_h^k}\bu_h\|_{L_2(\Gamma_h^k)}^2 + \|\bu_h\|_{L_2(\Gamma_h^k)}^2 + h^{-1} \|[\bu_h]_{\Gamma_h^k}\|_{L_2(\Sigma)}^2 \right ) +\frac{C_1}{2 \epsilon h} \|[\bu_h]_{\Gamma_h^k}\|_{L_2(\varSigma)}^2.
   \end{aligned}
 \end{equation}
Recalling the definition \eqref{eq:jdef}, we then have that 
\begin{equation}
j_{\Gamma_h^k} (\bu_h, \bu_h) \ge h^{-1} ({2 \rho}-\frac{2C_1}{\epsilon}-2\epsilon) \|[\bu_h]_{\Gamma_h^k} \|_{L_2(\varSigma)}^2 - 2\epsilon (\|{\rm Def}_{\Gamma_h^k} \bu_h \|_{L_2(\Gamma_h^k)}^2+\|\bu_h\|_{L_2(\Gamma_h^k)}^2),
\end{equation}
and thus
\begin{equation}
\begin{aligned}
a_{\Gamma_h^k}(\bu_h, \bu_h) & \ge (2-2\epsilon)\| {\rm Def}_{\Gamma_h^k} \bu_h \|_{L_2(\Gamma_h^k)}^2 +  (1-2\epsilon) \|\bu_h\|_{L_2(\Gamma_h^k)}^2 
\\ & ~~~~+ h^{-1} (\frac{2\rho}{h}-\frac{2C_1}{\epsilon h}-2\epsilon) \|[\bu_h]_{\Gamma_h^k} \|_{L_2(\varSigma)}^2 .
\end{aligned}
\end{equation}
Taking $\epsilon$ sufficiently small to ensure that $2-2\epsilon> 1-2\epsilon \ge c_0$ (for example, $\epsilon=\frac{1}{8}$) and then $\rho$ sufficiently large to ensure that $(\frac{2\rho }{h}-\frac{2C_1}{\epsilon h}-2\epsilon) \ge c_1 \frac{\rho}{h}$ with $c_0, c_1>0$ yields the desired result.
\end{proof}

We also state a continuity result.
\begin{lemma}  There holds for $\bu, \bw \in H_h^2(\gamma) \cap H({\rm div}; \gamma)$
\begin{equation}
\label{eq:continuity}
a_\gamma(\bu, \bw)  \lesssim \vvvert \bu\vvvert_\gamma \vvvert \bw\vvvert_\gamma.
\end{equation}
If in addition $\ipt \bw \in \bV_h$, then
\begin{equation}
\label{eq:disccontinuity}
a_\gamma(\bu, \bw) \lesssim \vvvert \bu \vvvert_\gamma \|\bw\|_{DG, \gamma}.
\end{equation}
\end{lemma}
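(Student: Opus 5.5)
Both estimates follow by bounding the three pieces of $a_\gamma$ in \eqref{eq:adef} term by term. We use Cauchy--Schwarz for the volume terms and a scaled trace inequality for the edge terms.

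\emph{Volume terms.} For the volume part we have
\[
2\int_\gamma {\rm Def}_\gamma \bu : {\rm Def}_\gamma \bw + \int_\gamma \bu\cdot\bw \lesssim \|\bu\|_{DG,\gamma}\|\bw\|_{DG,\gamma}.
\]
The right-hand side is bounded by both $\vvvert \bu\vvvert_\gamma \vvvert \bw\vvvert_\gamma$ and $\vvvert\bu\vvvert_\gamma\|\bw\|_{DG,\gamma}$, since the DG norm is trivially dominated by $\vvvert\cdot\vvvert_\gamma$ by \eqref{eq:triplebardef}.

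\emph{Penalty term.} Cauchy--Schwarz on each edge and summation give
\[
\frac{2\rho}{h}\int_{\varSigma^\gamma}[\bu]_\gamma\cdot[\bw]_\gamma \le 2\Big(\frac{\rho}{h}\|[\bu]_\gamma\|^2_{L_2(\varSigma^\gamma)}\Big)^{1/2}\Big(\frac{\rho}{h}\|[\bw]_\gamma\|^2_{L_2(\varSigma^\gamma)}\Big)^{1/2},
\]
which is bounded by the DG norms again.

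\emph{Consistency terms.} For a term such as $\int_{\varSigma^\gamma}\{{\rm Def}_\gamma \bu\}_\gamma\cdot[\bw]_\gamma$ we insert the weights $h^{1/2}$ and $h^{-1/2}$ and obtain
\[
\int_{\varSigma^\gamma}\{{\rm Def}_\gamma \bu\}_\gamma\cdot[\bw]_\gamma \le h^{1/2}\|\{{\rm Def}_\gamma\bu\}_\gamma\|_{L_2(\varSigma^\gamma)}\, h^{-1/2}\|[\bw]_\gamma\|_{L_2(\varSigma^\gamma)}.
\]
The second factor is bounded by $\rho^{-1/2}\|\bw\|_{DG,\gamma}\lesssim\|\bw\|_{DG,\gamma}$, since $\rho\ge C>0$. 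For the first factor we apply the scaled trace inequality \eqref{eq:scaledtrace}, transferred to the curved elements $K^\gamma$ via norm equivalence, to ${\rm Def}_\gamma\bu$ on each element. Since $|\bn_{\gamma i}|=1$, this gives
\[
h\|\{{\rm Def}_\gamma\bu\}_\gamma\|^2_{L_2(\varSigma^\gamma)}\lesssim \|\bu\|^2_{H_h^1(\gamma)}+h^2\|\bu\|^2_{H_h^2(\gamma)}\lesssim\vvvert\bu\vvvert_\gamma^2.
\]
The symmetric term $\int_{\varSigma^\gamma}\{{\rm Def}_\gamma\bw\}_\gamma\cdot[\bu]_\gamma$ is bounded the same way by $\vvvert\bw\vvvert_\gamma\|\bu\|_{DG,\gamma}$. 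Summing the three pieces proves \eqref{eq:continuity}.

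\emph{The estimate \eqref{eq:disccontinuity}.} Every piece above is already bounded by $\vvvert\bu\vvvert_\gamma\|\bw\|_{DG,\gamma}$ except the symmetric consistency term, which requires $h^{1/2}\|\{{\rm Def}_\gamma\bw\}_\gamma\|_{L_2(\varSigma^\gamma)}\lesssim\|\bw\|_{DG,\gamma}$. This is the step that uses the assumption $\ipt\bw\in\bV_h$, so that $\bw=\pt{(\ipt \bw)}$ is the Piola transform of a discrete function.

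The trace inequality bounds this quantity by $\|\bw\|_{H_h^1(\gamma)}+h\|\bw\|_{H_h^2(\gamma)}$. The inverse estimate \eqref{inverse_ineq_contin} controls the $H^2$ contribution by $\|\bw\|_{H_h^1(\gamma)}$. The discrete Korn inequality \eqref{eq:disckorn} then gives $\|\bw\|_{H_h^1(\gamma)}\lesssim\|\bw\|_{DG,\gamma}$. Equivalently, one can simply invoke \eqref{eq:triplebarequiv}, which states $\vvvert\bw\vvvert_\gamma\simeq\|\bw\|_{DG,\gamma}$ for such $\bw$.

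The only point needing care is applying the scaled trace inequality on the curved triangles $K^\gamma$. This follows by pulling back to $\bar K$ through $\bPsi$ and $\bp$ and using \eqref{eq:hnormequiv} together with \eqref{eq:edgenormeq}.
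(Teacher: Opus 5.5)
Your proposal is correct and takes essentially the paper's approach. The paper bounds each term of $a_\gamma$ by Cauchy--Schwarz, applies a scaled trace inequality to the average terms in $j_\gamma$ (the source of the $H_h^2$ contributions in $\vvvert\cdot\vvvert_\gamma$), and then obtains \eqref{eq:disccontinuity} from \eqref{eq:continuity} via \eqref{eq:triplebarequiv} when $\ipt\bw\in\bV_h$, exactly as you do.
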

\eqref{eq:continuity} can essentially be found in \cite[Equation (4.28)]{BDL20}. The proof is very similar here, and we do not repeat it except to note that the additional terms involving $H_h^2$ norms arise from applying a scaled trace inequality to the average terms in the jump form $j_\gamma$.  \eqref{eq:disccontinuity} follows from \eqref{eq:continuity} by applying \eqref{eq:triplebarequiv} (cf. \cite[(4.22)]{BDL20}).

\subsection{Error analysis}
We first define divergence free subspaces $\bX_h =\{\bw_h \in \bV_h: {\rm div}_{\Gamma_h^k} \bw_h=0\}$ and $\bX=\{\bw \in {\bf HT}^1(\gamma): {\rm div}_{\gamma} \bw = 0 \}$.   There then hold $\bu \in \bX$ and $\bu_h \in \bX_h$.  In addition, recall that $a_\gamma$ is a consistent form in the sense that $a_{\gamma}(\bu, \pt{\bw}_h) -b_\gamma(\pt{\bw}_h, p) = ({\bf f}, \pt{\bw}_h)$ for all $\bw_h \in \bV_h$.  Thus for $\bw \in \pt \bX_h\cup \bX$ and $\bw_h \in \bX_h$, 
\begin{equation} \label{eq:divfree}
a_{\gamma}( \bu, \pt{\bw}) = ( {\bf f}, \pt{\bw}), ~~~~a_{\Gamma_h^k}(\bu_h, \bw_h) = ({\bf f}_h, \bw_h). 
\end{equation}
Before proceeding further, note that coercivity, continuity, Korn inequalities, the continuous and discrete inf-sup conditions, and \eqref{eq:fapprox} yield
\begin{equation}
\label{stability}
\|\bu\|_{H^1(\gamma)}+ \|\pt \bu_h \|_{DG, \gamma} + \|p\|_{L_2(\gamma)} + \|p_h\|_{L_2(\Gamma_h^k)}  \lesssim \|{\bf f} \|_{L_2(\gamma)}.
\end{equation}

We now prove the following error estimate for $\bu-\pt{\bu}_h$.
\begin{lemma} If $h$ is sufficiently small and $\rho$ is sufficiently large independent of $h$ and $\bu$, there holds
\begin{equation}
\label{eq:uerror}
\|\bu -\pt{\bu}_h\|_{DG, \gamma}  \lesssim \inf_{{\bf \chi}_h \in \bV_h} \vvvert \bu-\pt {\bf \chi}_h\vvvert_\gamma + h^k \|{\bf f}\|_{L_2(\gamma)}.
\end{equation}
\end{lemma}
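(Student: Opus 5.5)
Since both $\bu_h$ and $\bu$ are exactly divergence free, we work entirely in the divergence-free subspaces and never see the pressure in the velocity estimate. Fix $\chi_h\in\bV_h$ and let $\bz_h\in\bX_h$ be the discretely divergence-free part of $\chi_h$, chosen so that $\vvvert \bu-\pt\bz_h\vvvert_\gamma\lesssim \vvvert\bu-\pt\chi_h\vvvert_\gamma$. One concrete choice, which avoids a separate Fortin argument, is $\bz_h=I_h\ipt\bu$ itself: by \eqref{eq:weakcdp} and \eqref{eq:piola_property}, $b_{\Gamma_h^k}(I_h\ipt\bu,q_h)=b_\gamma(\bu,q_h^\ell)=0$, so $I_h\ipt\bu\in\bX_h$. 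For an arbitrary $\chi_h$ one instead corrects by the standard inf-sup argument using \eqref{eq:discinfsup}; by \eqref{eq:piola_property}, $\|{\rm div}_{\Gamma_h^k}\chi_h\|$ is controlled by $\|{\rm div}_\gamma(\bu-\pt\chi_h)\|_{L_2(\gamma)}\lesssim\vvvert\bu-\pt\chi_h\vvvert_\gamma$. Set $\be_h=\bu_h-\bz_h\in\bX_h$ and split
\[
\bu-\pt\bu_h=(\bu-\pt\bz_h)-\pt\be_h .
\]
By the triangle inequality it then suffices to bound $\|\pt\be_h\|_{DG,\gamma}$.

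For the discrete part we use coercivity \eqref{eq:coercivity} together with norm equivalence \eqref{eq:dgequiv}:
\[
\|\pt\be_h\|_{DG,\gamma}^2\lesssim a_{\Gamma_h^k}(\be_h,\be_h)=a_{\Gamma_h^k}(\bu_h,\be_h)-a_{\Gamma_h^k}(\bz_h,\be_h).
\]
By \eqref{eq:divfree}, $a_{\Gamma_h^k}(\bu_h,\be_h)=({\bf f}_h,\be_h)_{\Gamma_h^k}$. In the second term we insert the definition \eqref{eq:Gdef}, $a_{\Gamma_h^k}(\bz_h,\be_h)=a_\gamma(\pt\bz_h,\pt\be_h)-G(\bz_h,\be_h)$, and then use consistency \eqref{eq:divfree}, $a_\gamma(\bu,\pt\be_h)=({\bf f},\pt\be_h)_\gamma$. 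This is legitimate because $\pt\be_h$ is divergence free by \eqref{eq:pioladiv}, so the pressure term drops. Altogether,
\[
a_{\Gamma_h^k}(\be_h,\be_h)=\big[({\bf f}_h,\be_h)_{\Gamma_h^k}-({\bf f},\pt\be_h)_\gamma\big]+a_\gamma(\bu-\pt\bz_h,\pt\be_h)+G(\bz_h,\be_h).
\]
The three terms are handled as follows.
\begin{itemize}
\item The data term is bounded by \eqref{eq:fapprox} by $h^{k+1}\|{\bf f}\|_{L_2(\gamma)}\|\pt\be_h\|_{DG,\gamma}$.
\item The consistency term is bounded by \eqref{eq:disccontinuity} by $\vvvert\bu-\pt\bz_h\vvvert_\gamma\|\pt\be_h\|_{DG,\gamma}$.
\item For the geometric term we apply \eqref{G_bound}, which gives $h^k\|\pt\bz_h\|_{DG,\gamma}\|\pt\be_h\|_{DG,\gamma}$. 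Here $\|\pt\bz_h\|_{DG,\gamma}\le\|\bu\|_{DG,\gamma}+\|\bu-\pt\bz_h\|_{DG,\gamma}\lesssim\|{\bf f}\|_{L_2(\gamma)}+\vvvert\bu-\pt\bz_h\vvvert_\gamma$, using \eqref{stability} and the fact that $\|\bu\|_{DG,\gamma}=\|\bu\|_{H^1}$-type because $[\bu]_\gamma=0$.
\end{itemize}
Dividing by $\|\pt\be_h\|_{DG,\gamma}$ and applying the triangle inequality gives \eqref{eq:uerror}.

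The main obstacle is the construction of $\bz_h$ with quasi-optimality relative to an arbitrary $\chi_h$ in the $\vvvert\cdot\vvvert_\gamma$ norm. The Fortin-type correction $\bz_h=\chi_h+\bm\delta_h$ only controls $\|\bm\delta_h\|_{DG}$, not $h\|\bm\delta_h\|_{H^2_h}$. This is acceptable because $\bm\delta_h\in\bV_h$, so by \eqref{eq:triplebarequiv} its $\vvvert\cdot\vvvert$ and DG norms are equivalent. If the correction proves awkward, we fall back on $\bz_h=I_h\ipt\bu$ and use the infimum in \eqref{eq:uerror} only through interpolation bounds.
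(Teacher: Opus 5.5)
Your proof is correct. The energy part (coercivity, consistency via \eqref{eq:divfree}, and the split into a data term, a continuity term and $G(\bz_h,\be_h)$) is the same as the paper's. The routes differ only in how quasi-optimality over an arbitrary $\chi_h$ is obtained.

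\textbf{The paper's route.} The paper never corrects an arbitrary $\chi_h$. It works throughout with $\bz_h=I_h\ipt\bu$, which is divergence free by \eqref{eq:pioladiv} and \eqref{eq:cdp}. Only at the end does it use that $I_h$ reproduces $\bV_h$, so that $\ipt\bu-I_h\ipt\bu=(\ipt\bu-\chi_h)-I_h(\ipt\bu-\chi_h)$. Combined with the $\vvvert\cdot\vvvert$-stability \eqref{eq:interp_stability2}, this gives $\vvvert\bu-\pt{I_h\ipt\bu}\vvvert_\gamma\lesssim\vvvert\bu-\pt\chi_h\vvvert_\gamma$ for every $\chi_h\in\bV_h$. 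So your ``fallback'' already yields the full infimum, not just interpolation rates. It also lets the paper bound $\|\pt\bz_h\|_{DG,\gamma}\lesssim\|\bu\|_{H^1(\gamma)}$ directly by \eqref{eq:interp_stability}.

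\textbf{Your route.} Your primary route, the Fortin-type correction $\bz_h=\chi_h+\bm\delta_h$ via \eqref{eq:discinfsup}, is the classical Brezzi argument, and it is sound here for these reasons:
\begin{itemize}
\item The kernel of $b_{\Gamma_h^k}$ on $\bV_h\times Q_h$ coincides with $\bX_h$, because $b_{\Gamma_h^k}(\bw_h,q_h)=b_{\bar\Gamma_h}(\bar\bw_h,\bar q_h)$ and ${\rm div}_{\bar\Gamma_h}\bar\bV_h\subset\bar Q_h$. You use this implicitly and should state it.
\item Constants cause no trouble, since $b_{\Gamma_h^k}(\cdot,1)=0$ on a closed surface.
\item The size of the correction is controlled through \eqref{eq:pioladiv} together with ${\rm div}_\gamma\bu=0$.
\item Since $\bm\delta_h\in\bV_h$, \eqref{eq:triplebarequiv} and \eqref{eq:dgequiv} upgrade the DG bound on $\bm\delta_h$ to a $\vvvert\cdot\vvvert_\gamma$ bound.
\end{itemize}

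\textbf{Trade-off.} Your argument works for any velocity--pressure pair with a discrete inf-sup condition and needs no commuting projection. The price is that the velocity estimate then depends on the inf-sup constant $\alpha$ and on \eqref{eq:discinfsup} having been proved first. The paper's argument is shorter and independent of $\alpha$, because it exploits the BDM commuting-diagram structure directly.
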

\begin{proof} \hspace{.1cm}
Let $\bw_h = I_h \ipt \bu$.  By the divergence relationship \eqref{eq:pioladiv} and the commuting diagram property \eqref{eq:cdp}, there holds  $\bw_h  \in \bX_h$.  Letting ${\bf e}_h = \bu_h-\bw_h \in \bX_h$, employing \eqref{eq:coercivity}, \eqref{eq:divfree},  \eqref{eq:disccontinuity}, \eqref{eq:fapprox},  the geometric error estimate \eqref{G_bound}, and \eqref{eq:interp_stability} while recalling $\bw_h =I_h \ipt \bu$ then yields
\begin{equation}
\begin{aligned}
\|{\bf e}_h \|_{DG,\Gamma_h^k}^2 & \lesssim a_{\Gamma_h^k}({\bf e}_h, {\bf e}_h) = ({\bf f}_h, {\bf e}_h) - a_{\Gamma_h^k} (\bw_h, {\bf e}_h)
\\ & = a_\gamma(\bu, \pt{\bf e}_h)-a_\gamma(\pt{\bw}_h, \pt{\bf e}_h)+ G({\bw}_h, {\bf e}_h)+({\bf F}_h^\ell, \pt {\bf e}_h)_{\Gamma_h^k} -({\bf f}, \pt{\bf e}_h)_\gamma
\\ & \lesssim  \vvvert \bu-\pt{\bw}_h\vvvert_\gamma \|\pt{\bf e}_h\|_{DG, \gamma}
+  h^k \|\pt{\bw}_h\|_{DG,\gamma} \|\pt{\bf e}_h\|_{DG, \gamma} + h^{k+1} \|{\bf f}\|_{L_2(\gamma)} \|\pt{\bf e}_h\|_{L_2(\gamma)}
\\ & \lesssim \left [\vvvert \bu-\pt{\bw}_h\vvvert_\gamma + h^k \|\bu\|_{H^1(\gamma)}+ h^{k+1} \|{\bf f}\|_{L_2(\gamma)}  \right] \|\pt {\bf e}_h\|_{DG,\gamma}.
\end{aligned}
\end{equation} 
Applying equivalence of $DG$ norms on $\gamma$ and $\Gamma_h^k$ to the factor $\|\pt{\bf e}_h\|_{DG, \gamma}$, dividing through by the resulting term, noting that $\|{\bf u}\|_{H^1(\Gamma)} \lesssim \|{\bf f}\|_{L_2(\gamma)}$ by \eqref{stability}, and finally writing $\bu-\pt{\bw}_h = (\bu - \pt \chi_h)-I_h (\bu-\pt \chi_h)$ and  applying the triangle equality along with \eqref{eq:interp_stability2} to the first two terms above yields the first desired result. 
\end{proof}

We now provide estimates for the pressure error $p^\ell -p_h$.
\begin{lemma} If $h$ is sufficiently small and $\rho$ is sufficiently large independent of $h$ and $\bu$, then
\begin{equation}
\label{eq:perror}
\|p^\ell-p_h\|_{L_2(\gamma)} \lesssim \inf_{q_h \in Q_h^0} \|p-q_h^\ell\|_{L_2(\gamma)}+ \inf_{{\bf \chi}_h \in \bV_h} \vvvert \bu-\pt {\bf \chi}_h\vvvert_\gamma + h^k \|{\bf f}\|_{L_2(\gamma)}.
\end{equation}
\end{lemma}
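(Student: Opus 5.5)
We use the discrete inf-sup condition \eqref{eq:discinfsup} applied to a discrete pressure error, in the standard way. Fix $q_h \in Q_h^0$ and split $p^\ell - p_h = (p^\ell - q_h) + (q_h - p_h)$. Here $p^\ell$ means the lift of $p$ to $\Gamma_h^k$, and all norms on $\gamma$ and $\Gamma_h^k$ are interchanged via norm equivalence. The first term is already of the form $\|p-q_h^\ell\|_{L_2(\gamma)}$. For the second term, \eqref{eq:discinfsup} gives
\[
\alpha\|q_h-p_h\|_{L_2(\Gamma_h^k)} \le \sup_{\bw_h \in \bV_h\setminus{\bf 0}} \frac{b_{\Gamma_h^k}(\bw_h, q_h-p_h)}{\|\bw_h\|_{DG,\Gamma_h^k}},
\]
so it suffices to bound $b_{\Gamma_h^k}(\bw_h, q_h - p_h)$ for arbitrary $\bw_h\in\bV_h$.

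We write $b_{\Gamma_h^k}(\bw_h,q_h-p_h) = b_{\Gamma_h^k}(\bw_h, q_h) - b_{\Gamma_h^k}(\bw_h,p_h)$. The first equation of \eqref{eq:FEM} gives $b_{\Gamma_h^k}(\bw_h,p_h) = a_{\Gamma_h^k}(\bu_h,\bw_h) - ({\bf f}_h,\bw_h)_{\Gamma_h^k}$. By \eqref{eq:piola_property} we have $b_{\Gamma_h^k}(\bw_h,q_h)=b_\gamma(\pt\bw_h,q_h^\ell)$. The consistency relation $a_\gamma(\bu,\pt\bw_h)-b_\gamma(\pt\bw_h,p)=({\bf f},\pt\bw_h)_\gamma$, valid for all $\bw_h\in\bV_h$ as noted after \eqref{eq:weakform}, then yields
\[
b_{\Gamma_h^k}(\bw_h,q_h-p_h) = b_\gamma(\pt\bw_h, q_h^\ell - p) + a_\gamma(\bu,\pt\bw_h) - a_{\Gamma_h^k}(\bu_h,\bw_h) + \big[({\bf f}_h,\bw_h)_{\Gamma_h^k} - ({\bf f},\pt\bw_h)_\gamma\big].
\]
We bound the three pieces as follows.
\begin{itemize}
\item The first piece is at most $\|{\rm div}_\gamma\pt\bw_h\|_{L_2(\gamma)}\|p-q_h^\ell\|_{L_2(\gamma)} \lesssim \|\pt\bw_h\|_{DG,\gamma}\|p-q_h^\ell\|_{L_2(\gamma)}$, using the discrete Korn inequality \eqref{eq:disckorn}.
\item The last piece is $\lesssim h^{k+1}\|{\bf f}\|_{L_2(\gamma)}\|\pt\bw_h\|_{L_2(\gamma)}$ by \eqref{eq:fapprox}.
\item For the middle piece we add and subtract $a_\gamma(\pt\bu_h,\pt\bw_h)$ and use the definition \eqref{eq:Gdef}. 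This gives $a_\gamma(\bu-\pt\bu_h,\pt\bw_h) + G(\bu_h,\bw_h)$.
\end{itemize}

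The main obstacle is the term $a_\gamma(\bu-\pt\bu_h,\pt\bw_h)$. The continuity estimate \eqref{eq:disccontinuity} needs $\vvvert\bu-\pt\bu_h\vvvert_\gamma$, whereas \eqref{eq:uerror} only controls the DG norm. We therefore insert $\pt\bw_h^*$ with $\bw_h^* = I_h\ipt\bu$, exactly as in the proof of \eqref{eq:uerror}:
\[
a_\gamma(\bu-\pt\bu_h,\pt\bw_h) = a_\gamma(\bu-\pt\bw_h^*,\pt\bw_h) + a_\gamma(\pt\bw_h^*-\pt\bu_h,\pt\bw_h).
\]
The first term is $\lesssim \vvvert\bu-\pt\bw_h^*\vvvert_\gamma\|\pt\bw_h\|_{DG,\gamma}$ by \eqref{eq:disccontinuity}. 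The second term involves only discrete functions, so by \eqref{eq:disccontinuity} together with \eqref{eq:triplebarequiv} it is $\lesssim \|\pt(\bw_h^*-\bu_h)\|_{DG,\gamma}\|\pt\bw_h\|_{DG,\gamma}$. In the proof of \eqref{eq:uerror} we already showed $\|{\bf e}_h\|_{DG}\lesssim \vvvert\bu-\pt\bw_h^*\vvvert_\gamma + h^k\|{\bf f}\|_{L_2(\gamma)}$. Next, \eqref{eq:interp_stability2} with the splitting $\bu-\pt\bw_h^* = (\bu-\pt\chi_h) - \pt I_h\ipt(\bu-\pt\chi_h)$ converts $\vvvert\bu-\pt\bw_h^*\vvvert_\gamma$ into $\inf_{\chi_h}\vvvert\bu-\pt\chi_h\vvvert_\gamma$. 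Finally, \eqref{G_bound} together with \eqref{stability} gives $|G(\bu_h,\bw_h)|\lesssim h^k\|{\bf f}\|_{L_2(\gamma)}\|\pt\bw_h\|_{DG,\gamma}$.

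Collecting these bounds, applying \eqref{eq:dgequiv} to replace $\|\pt\bw_h\|_{DG,\gamma}$ by $\|\bw_h\|_{DG,\Gamma_h^k}$, and dividing through, we obtain a bound on $\|q_h-p_h\|$. Adding $\|p^\ell-q_h\|$, using norm equivalence between $\gamma$ and $\Gamma_h^k$, and taking the infimum over $q_h\in Q_h^0$ gives \eqref{eq:perror}.
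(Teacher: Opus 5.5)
Your proposal is correct and follows essentially the same route as the paper: the inf-sup reduction, then the decomposition of $b_{\Gamma_h^k}(\bw_h,q_h-p_h)$ into $b_\gamma(\pt\bw_h,q_h^\ell-p)$, $a_\gamma(\bu-\pt\bu_h,\pt\bw_h)$, $G(\bu_h,\bw_h)$ and the data term, each bounded by the same estimates. Your insertion of $I_h\ipt\bu$ to avoid needing $\vvvert\bu-\pt\bu_h\vvvert_\gamma$ is what the paper does via \eqref{eq:tripletodg} before invoking \eqref{eq:uerror}; the only difference is that you reuse the intermediate bound on ${\bf e}_h$ from that proof rather than its final statement.
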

\begin{proof} \hspace{.1cm}
Given $q_h \in Q_h^0$, we may employ \eqref{eq:discinfsup} to find $\bw_h \in \bV_h$ such that $\|\bw_h\|_{DG, \Gamma_h^k} =1$ and 
\begin{equation}
\|q_h-p_h \|_{L_2(\Gamma_h^k)} \lesssim b_{\Gamma_h^k} (\bw_h, q_h-p_h).
\end{equation}
Using \eqref{eq:piola_property}, \eqref{eq:FEM}, and \eqref{eq:weakform}, we next compute that
\begin{equation}
\begin{aligned}
b_{\Gamma_h^k}& (\bw_h, q_h-p_h)  = b_\gamma(\pt{\bw}_h, q_h^\ell)-a_{\Gamma_h^k} (\bu_h, \bw_h) +({\bf f}_h, \bw_h)_{\Gamma_h^k} 
\\ & =  b_\gamma(\pt{\bw}_h, q_h^\ell -p) + b_\gamma(\pt{\bw}_h, p) -a_\gamma(\bu, \pt{\bw}_h)+a_\gamma(\bu-\pt{\bu}_h, \pt{\bw}_h) 
\\ & ~~~~+G(\bu_h, \bw_h) +({\bf f}_h, \bw_h)_{\Gamma_h^k} 
\\ & =b_\gamma(\pt{\bw}_h, q_h^\ell -p)+a_\gamma(\bu-\pt{\bu}_h, \pt{\bw}_h)+G(\bu_h, \bw_h) + ({\bf f}_h, \bw_h)_{\Gamma_h^k} -({\bf f}, \pt{\bw}_h).
\end{aligned}
\end{equation}
Employing continuity of $a_\gamma$ and $b_\gamma$, $\|\bw_h\|_{DG, \Gamma_h^k}=1$, the triangle inequality, \eqref{G_bound}, and equivalence of relevant norms  on $\gamma$ and $\Gamma_h^k$ thus yields for any $q_h \in Q_h^0$ 
\begin{equation}
\label{eq:pressure1}
\begin{aligned}
\|p^\ell-p_h\|_{L_2(\Gamma_h^k)} & \lesssim \|p^\ell-q_h\|_{L_2(\Gamma_h^k)} + \vvvert \bu-\pt{\bu}_h\vvvert_\gamma + h^k\|\bu_h\|_{DG, \Gamma_h^k} 
\\ &  ~~~~+ ({\bf f}_h, \bw_h)_{\Gamma_h^k} -({\bf f}, \pt{\bw}_h)_\gamma.
\end{aligned}
\end{equation}
Using the triangle inequality, \eqref{eq:triplebarequiv}, the triangle inequality again, and once again \eqref{eq:triplebarequiv} yields 
\begin{equation}
\label{eq:tripletodg}
\begin{aligned}
    \vvvert \bu-\pt \bu_h\vvvert_\gamma & \lesssim  \vvvert \bu-\pt {\bf \chi}_h \vvvert_\gamma + \vvvert \pt {\bf \chi}_h-\pt \bu_h\vvvert_\gamma 
    \lesssim \vvvert \bu-\pt {\bf \chi}_h \vvvert_\gamma+ \| \pt {\bf \chi}_h-\pt \bu_h\|_{DG,\gamma}
 \\ &  \lesssim     
    \|\bu-\pt \bu_h \|_{DG, \gamma} + \vvvert \bu-\pt {\bf \chi}_h \vvvert_\gamma.
    \end{aligned}
\end{equation}
In addition, by \eqref{eq:fapprox} and \eqref{stability} we have
\begin{equation}
\label{eq:pressure3}
h^k\|\bu_h\|_{DG, \Gamma_h^k} + ({\bf f}_h, \bw_h)_{\Gamma_h^k} -({\bf f}, \pt{\bw}_h) \lesssim h^k \|{\bf f}\|_{L_2(\gamma)}.
\end{equation}
Collecting \eqref{eq:pressure1}, \eqref{eq:tripletodg}, and \eqref{eq:pressure3} and then inserting \eqref{eq:uerror} while applying norm equivalence as needed yields the desired result.  
\end{proof}

\begin{theorem} Assume that $h$ is sufficiently small and that $\rho$ is sufficiently large independent of $h$ and $\bu$.  Assume also that $\bu \in [H^{r+1} (\gamma)]^3$ and $p \in H^r(\gamma)$.  Then
\begin{equation}
\label{eq:energyerror}
\|\bu-\pt \bu_h\|_{DG, \gamma} + \|p-p_h^\ell\|_{L_2(\gamma)} \lesssim h^r + h^k.
\end{equation}
If $k \ge 2$, or if $k \ge 1$ and $|d(z_0)-d(z_1)| \lesssim h^3$ for all vertex pairs $z_0, z_1$ sharing an edge in $\calE$, then
\begin{equation}
\label{eq:optl2}
\|\bu-\pt \bu_h\|_{L_2(\gamma)} \lesssim h^{r+1}\|\bu\|_{H^{r+1}(\gamma)} + h^{k+1}  \|{\bf f}\|_{L_2(\gamma)}.
\end{equation}
\end{theorem}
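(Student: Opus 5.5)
The energy estimate \eqref{eq:energyerror} follows directly from the two preceding lemmas. We apply \eqref{eq:uerror} and \eqref{eq:perror} with $\chi_h = I_h \ipt \bu$ and $q_h = \pi_h^k \tilde p$, where $\tilde p = p\circ\bp$ on $\Gamma_h^k$, corrected by a constant to lie in $Q_h^0$. By norm equivalence and \eqref{eq:dginterp} with $m=r+1$, we get $\vvvert \bu-\pt{I_h\ipt\bu}\vvvert_\gamma \lesssim h^r\|\bu\|_{H^{r+1}(\gamma)}$. Here the Piola maps $\calP_{\bp}$ and $\calP_{\bp^{-1}}$ are $W^m_\infty$-bounded by \eqref{eq:pprops}, so $\|\ipt\bu\|_{H^{r+1}_h(\Gamma_h^k)}\lesssim \|\bu\|_{H^{r+1}(\gamma)}$. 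By \eqref{eq:l2interp}, $\|p-(\pi_h^k\tilde p)^\ell\|_{L_2(\gamma)}\lesssim h^r\|p\|_{H^r(\gamma)}$. The mean-value correction is $O(h^{k+1})$, since $\int_\gamma p=0$ and $|1-\mu_h|\lesssim h^{k+1}$. The remaining term $h^k\|{\bf f}\|$ gives the $h^k$ contribution.

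For \eqref{eq:optl2} we use an Aubin--Nitsche duality argument. Let $\bm e=\bu-\pt\bu_h$. Solve the dual Stokes problem \eqref{eq:weakform} with right-hand side $\bPi\bm e$ for $(\bz,s)$. By \eqref{eq:shifttheorem}, $\|\bz\|_{H^2}+\|s\|_{H^1}\lesssim\|\bm e\|_{L_2}$. Note that $\bm e$ is tangential and ${\rm div}_\gamma \bm e=0$, because ${\rm div}_\gamma\pt\bu_h=\mu_h^{-1}{\rm div}\,\bu_h=0$ by \eqref{eq:pioladiv}. Consistency (valid for test functions in $\pt\bV_h$ and $\bX$) gives $\|\bm e\|^2=a_\gamma(\bm e,\bz)-b_\gamma(\bm e,s)$, and the $b$ term vanishes. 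Set $\bz_h=I_h\ipt\bz\in\bX_h$. We split
\[
a_\gamma(\bm e,\bz)=a_\gamma(\bm e,\bz-\pt\bz_h)+a_\gamma(\bu,\pt\bz_h)-a_\gamma(\pt\bu_h,\pt\bz_h).
\]
The first term is bounded by $\vvvert\bm e\vvvert_\gamma\,\vvvert\bz-\pt\bz_h\vvvert_\gamma\lesssim (h^r+h^k)\,h\|\bm e\|$, using \eqref{eq:continuity}, \eqref{eq:dginterp} and \eqref{eq:tripletodg}. For the remaining terms, \eqref{eq:divfree} gives $a_\gamma(\bu,\pt\bz_h)=({\bf f},\pt\bz_h)$ and $a_{\Gamma_h^k}(\bu_h,\bz_h)=({\bf f}_h,\bz_h)$. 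Hence they equal
\[
({\bf f},\pt\bz_h)-({\bf f}_h,\bz_h)-G(\bu_h,\bz_h),
\]
and the data term is $O(h^{k+1})\|{\bf f}\|\|\bm e\|$ by \eqref{eq:fapprox}.

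The main obstacle is the geometric term $G(\bu_h,\bz_h)$, since \eqref{G_bound} only gives $h^k$. We split
\[
G(\bu_h,\bz_h)=G(\bu_h-\ipt\bu,\bz_h)+G(\ipt\bu,\bz_h-\ipt\bz)+G(\ipt\bu,\ipt\bz).
\]
The last piece is bounded via \eqref{eq:GL2est} by $h^{k+1}\|\bu\|_{H^2}\|\bz\|_{H^2}\lesssim h^{k+1}\|{\bf f}\|\|\bm e\|$. This is exactly where the hypothesis $k\ge2$, or $k=1$ with $|d(z_0)-d(z_1)|\lesssim h^3$, enters. The other two pieces are bounded using \eqref{eq:G_bound_mod}, which applies to $H({\rm div})\cap H^2_h$ functions:
\[
h^k\vvvert\bm e\vvvert_\gamma\vvvert\pt\bz_h\vvvert_\gamma\lesssim h^k(h^r+h^k)\|\bm e\|
\quad\text{and}\quad
h^k\vvvert\bu\vvvert_\gamma\, h\|\bz\|_{H^2}.
\]
For the first, $\vvvert \pt\bz_h\vvvert_\gamma\lesssim\|\bz\|_{H^2}$ by \eqref{eq:interp_stability2}, and $h^k(h^r+h^k)\lesssim h^{r+1}+h^{k+1}$ since $k,r\ge1$. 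For the second we need $\vvvert \bu\vvvert_\gamma\lesssim\|\bu\|_{H^2}\lesssim\|{\bf f}\|$, which holds because $\bu$ has no jumps. In the first bound, $\vvvert\bm e\vvvert_\gamma$ must be controlled at order $h^r+h^k$ with $\|\bu\|_{H^{r+1}}$ and $\|{\bf f}\|$ weights. We track constants so that $h^r$ multiplies $\|\bu\|_{H^{r+1}}$ and $h^k$ multiplies $\|{\bf f}\|$; in particular, using $h\cdot h^r\|\bu\|_{H^{r+1}}$ from the first term needs the interpolation bound on $\vvvert \bm e\vvvert$ in that form. Collecting all pieces and dividing by $\|\bm e\|_{L_2(\gamma)}$ yields \eqref{eq:optl2}.
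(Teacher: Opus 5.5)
Your proposal is correct and follows essentially the same route as the paper. The energy bound comes from \eqref{eq:uerror} and \eqref{eq:perror} with $\chi_h = I_h \ipt \bu$ and a mean-corrected $L_2$ projection of $p^\ell$, where the correction is $O(h^{k+1})$. The $L_2$ bound is the same duality argument with $\bz_h = I_h \ipt \bz$ and the identical three-way split of $G(\bu_h,\bz_h)$, with \eqref{eq:GL2est} handling $G(\ipt\bu,\ipt\bz)$. The differences are cosmetic: you carry the dual pressure explicitly instead of posing the dual problem on $\bX$, and you bound $G(\bu_h-\ipt\bu,\bz_h)$ with the full-order energy estimate rather than the $m=2$ interpolation bound the paper uses.
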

\begin{proof} \hspace{.1cm}
All terms in the upper bounds in \eqref{eq:uerror} and \eqref{eq:perror} are bounded directly by the right hand side of \eqref{eq:energyerror} upon applying the interpolation estimates \eqref{eq:vecinterp} and \eqref{eq:dginterp} with the exception of $\inf_{q_h \in Q_h^0} \|p-q_h^\ell\|_{L_2(\gamma)}$.  For the latter term we must account for the fact that $q_h \in Q_h^0$ does not imply $q_h^\ell \in L_2^0(\gamma)$.  Using norm equivalence, we compute
\begin{equation}
\begin{aligned}   
\inf_{q_h \in Q_h^0} \|p-q_h^\ell\|_{L_2(\gamma)} & \lesssim \inf_{q_h \in Q_h^0}   \|p^\ell-q_h\|_{L_2(\Gamma_h^k)} 
\\ & \le \inf_{q_h \in Q_h^0}  \left \|p^\ell-|\Gamma_h^k|^{-1} \int_{\Gamma_h^k} p^\ell -q_h \right \|_{L_2(\Gamma_h^k)} +  |\Gamma_h^k  |^{-1/2} \left |\int_{\Gamma_h^k} p^\ell \right |.
\end{aligned}
\end{equation}
Using $|1-\mu_h| \lesssim h^{k+1}$, $0=\int_\gamma p = \int _{\Gamma_h^k} \mu_h p^\ell $, and \eqref{stability} yields
\begin{equation}
\left |\int_{\Gamma_h^k} p^\ell \right |  =\left | \int_{\Gamma_h^k} (1-\mu_h) p^\ell \right |  \lesssim h^{k+1} \|p\|_{L_2(\Gamma)} \lesssim h^{k+1} \|f\|_{L_2(\gamma)}.
\end{equation}
Noting that $\pi_h^k L_2^0(\Gamma_h^k)=Q_h^0$ and employing \eqref{eq:l2interp} yields after choosing $q_h = \pi_h^k (p^\ell-|\Gamma_h^k|^{-1} \int_{\Gamma_h^k} p^\ell)$ that
\begin{equation}
\inf_{q_h \in Q_h^0}  \left \|p^\ell-|\Gamma_h^k|^{-1} \int_{\Gamma_h^k} p^\ell -q_h \right \|_{L_2(\Gamma_h^k)} \lesssim h^r \|p\|_{H^r(\gamma)}.
\end{equation}
Combining these inequalities yields \eqref{eq:energyerror}.

We next prove \eqref{eq:optl2}.  Let $\bz \in \bX$ satisfy
\begin{equation}
a_\gamma(\bw, \bz) = (\bw, \bu-\pt \bu_h)_\gamma, ~~\bw \in \bX.  
\end{equation}
By \eqref{eq:shifttheorem} and \eqref{eq:divfree}, there then also hold
\begin{equation}
\label{eq:dualdef}
a_\gamma(\bw, \bz) =(\bw, \bu-\pt \bu_h)_\gamma, ~~~\bw \in \bX \cup \pt \bX_h, \hspace{1cm} \| \bz \|_{H^2(\gamma)} \lesssim \|\bu -\pt \bu_h\|_{L_2(\gamma)}.
\end{equation}

Let $\bz_h= I_h \ipt \bz \in \bX_h$.  Then
\begin{equation}
\label{eq:L2_1}
\begin{aligned}
\|\bu& -\pt \bu_h\|_{L_2(\gamma)}^2  = a_\gamma (\bu-\pt \bu_h, \bz) = a_\gamma (\bu-\pt \bu_h, \bz-\pt \bz_h) + a_\gamma (\bu-\pt \bu_h, \pt \bz_h)
\\ & =a_\gamma (\bu-\pt \bu_h, \bz-\pt \bz_h) + a_\gamma (\bu , \pt \bz_h)-a_\gamma(\pt \bu_h, \pt \bz_h)
\\ & = a_\gamma (\bu-\pt \bu_h, \bz-\pt \bz_h) +({\bf f}, \pt \bz_h)_\gamma  -G(\pt \bu_h, \pt \bz_h) -a_{\Gamma_h^k} (\bu_h, \bz_h)
\\ & = a_\gamma (\bu-\pt \bu_h, \bz-\pt \bz_h) + ({\bf f}, \pt \bz_h)_\gamma -G(\pt \bu_h-\bu, \pt \bz_h )- G(\bu, \pt \bz_h - \bz) -G(\bu, \bz) -({\bf f}_h, \bz_h)
\\ & = a_\gamma (\bu-\pt \bu_h, \bz-\pt \bz_h) + ({\bf f}-{\bf F}_h^\ell,  \pt \bz_h)_\gamma
 -G(\pt \bu_h-\bu, \pt \bz_h )- G(\bu, \pt \bz_h - \bz) -G(\bu, \bz) .
 \\ & =: I+II+III+IV+V.  
\end{aligned}
\end{equation}
By \eqref{eq:continuity}, \eqref{eq:tripletodg} with ${\bf \chi}_h = I_h \ipt \bu$, \eqref{eq:energyerror}, \eqref{eq:vecinterp}, \eqref{eq:dginterp}, and \eqref{eq:dualdef} there holds
\begin{equation}
\begin{aligned}
I  & \lesssim \vvvert \bu-\pt \bu_h \vvvert_{\gamma}  \vvvert \bz-\pt{I_h \ipt \bz} \vvvert_{\gamma} 
\\ & \lesssim  ( h^r \|\bu\|_{H^{r+1} (\gamma)} + h^k \|{\bf f}\|_{L_2(\gamma)}) h \|\bz \|_{H^2(\gamma)}
\\ & \lesssim ( h^{r+1} \|\bu\|_{H^{r+1}(\gamma)} + h^{k+1}  \|{\bf f}\|_{L_2(\gamma)}) \|\bu-\pt \bu_h\|_{L_2(\gamma)}.
\end{aligned}
\end{equation}
Next, by \eqref{eq:fapprox}, \eqref{eq:interp_stability}, and \eqref{eq:dualdef}  we have
\begin{equation}
II \lesssim h^{k+1} \|{\bf f}\|_{L_2(\gamma)} \| \pt \bz_h\|_{L_2(\gamma)} \lesssim h^{k+1} \|{\bf f}\|_{L_2(\gamma)} \|\bu-\pt \bu_h\|_{L_2(\gamma)}. 
\end{equation}
Using \eqref{eq:G_bound_mod}, \eqref{eq:tripletodg}, \eqref{eq:uerror},  \eqref{eq:vecinterp} and \eqref{eq:dginterp} with $m=2$, and \eqref{eq:dualdef} yields
\begin{equation}
\begin{aligned}
III  & \lesssim h^k \vvvert \pt\bu_h-\bu\vvvert_\gamma \vvvert \pt \bz_h \vvvert_{\gamma}  
\\ & \lesssim h^k (  \|\bu - \pt \bu_h\|_{DG,\gamma} + \vvvert \bu-\pt{ I_h \ipt \bu}\vvvert_\gamma) ( \vvvert \pt \bz_h-\bz \vvvert_\gamma + \vvvert \bz \vvvert_\gamma)
\\ & \lesssim h^k (h \|\bu \|_{H^2(\gamma)} + h^k \|{\bf f}\|_{L_2(\gamma)})  h \|\bz\|_{H^2(\gamma)}
\\ & \lesssim h^{k+1} \|{\bf f} \|_{L_2(\gamma)}  \|\bz \|_{H^2(\gamma)}  \lesssim h^{k+1} \|{\bf f}\|_{L_2(\gamma)} \|\bu -\pt \bu_h\|_{L_2(\gamma)}.
\end{aligned}
\end{equation}

Similarly,
\begin{equation}
IV \lesssim h^k \vvvert \bu \vvvert_\gamma \vvvert \pt \bz_h - \bz \vvvert_\gamma \lesssim h^{k+1} \|{\bf f}\|_{L_2(\gamma)} \|\bu-\pt \bu_h\|_{L_2(\gamma)}.
\end{equation}
Finally, \eqref{eq:GL2est}, \eqref{stability}, and \eqref{eq:dualdef} yield
\begin{equation}
V \lesssim h^{k+1}  \|\bu\|_{H^2(\gamma)} \|\bz\|_{H^2(\gamma)} \lesssim h^{k+1} \|{\bf f}\|_{L_2(\gamma)} \|\bu -\pt \bu_h\|_{L_2(\gamma)}.
\end{equation}
Collecting the previous inequalities into \eqref{eq:L2_1} and dividing through by $\|\bu-\pt \bu_h\|_{L_2(\gamma)}$ completes the proof.  
\end{proof}

\begin{remark}
 The case $k=1$ has presented special challenges in analyzing geometric errors for $L_2$ estimates for vector-Laplace and surface Stokes methods in other contexts as well.  In \cite{DN24} a tangentially conforming method based on the MINI element was analyzed on linear surfaces ($k=1$).  The construction is very similar to the Taylor-Hood method for which optimal-order $L_2$ estimates as in \eqref{eq:optl2} were proved in \cite{DN26} for $r=k \ge 2$.  Numerical experiments indicate that optimal bounds hold for the MINI element with $r=k=1$ as well.  However, the proof method used in \cite{DN26} for the Taylor-Hood element does not apply to the MINI element, and accordingly proof of optimal $L_2$ estimates remains open for the MINI method with $k=1$.  Another well-established method for surface Stokes and vector Laplace problems employs non-tangential velocity spaces composed of tri-$H^1(\Gamma_h^k)$ conforming (componentwise Lagrange) elements.  Tangentiality is enforced weakly by penalizing the normal component of the solution on $\Gamma_h^k$.  In order to achieve optimal convergence in both the $L_2$ and energy norms, it is necessary to employ a higher-order approximation to the continuous normal $\bnu$ in the tangential penalty term than holds for the natural choice of the normal $\bnu_h$ to $\Gamma_h^k$ \cite{HansboLarsonLarsson20}; this restriction is confirmed by numerical experiments.  In \cite{HP23, HP25} it was confirmed that optimal estimates in the energy and $L_2$ norms are achieved by employing $\bnu_h$ in the penalty formulation if only the tangential portion of the error is considered, except in the case of the $L_2$ error when $k=1$.
\end{remark}

\section{Numerical examples}
\label{sec:numerics}

In this section we document numerical experiments which confirm that the orders of convergence proved in \eqref{eq:energyerror} are sharp.  We also present numerical experiments which confirm the $L_2$ velocity error estimate \eqref{eq:optl2} and indicate that the extra condition $|d(z_1)-d(z_2)| \lesssim h^3$ required by our proofs when $k=1$ may not be needed.

\subsection{Numerical results}
We took $\gamma$ to be an ellipsoid with principal axes $(1.1, 1.2, 1.3)$, i.e., the zero level set of $\frac{x^2}{1.1^2}+\frac{y^2}{1.2^2}+\frac{z^2}{1.3^2}$.  The exact solutions for the velocity and the pressure are
\begin{align*}
\mathbf{u} &= \bPi [-z^2, x, y]^\top \\
p &= xy^3+z 
\end{align*}
with ${\bf f}$ computed accordingly.  Notice that $\mathbf{u}$ is not a polynomial since we use the projection operator.  In addition, ${\rm div}_\gamma \bu \neq 0$, so the Stokes system must be solved with nonzero incompressibility constraint. We employed a MATLAB code built on top of the iFEM library \cite{Ch09PP}.  The code allowed for arbitrary combinations of $k$ and $r$ for $1 \le r,k \le 4$.

In Figure \ref{fig_n1} we document convergence rates for isoparametric surface approximations $k=r$ for $r=1,2,3,4$. In the left plot we see the expected energy convergence rate $h^r+h^k$, while in the right plot the convergence rate $h^{r+1}+ h^{k+1}$ is observed for the velocity $L_2$ error as discussed above.  
\begin{figure}[h]
\begin{center}
\includegraphics[alt={Plot of numerical experiments showing optimal energy error decrease},scale=.26]{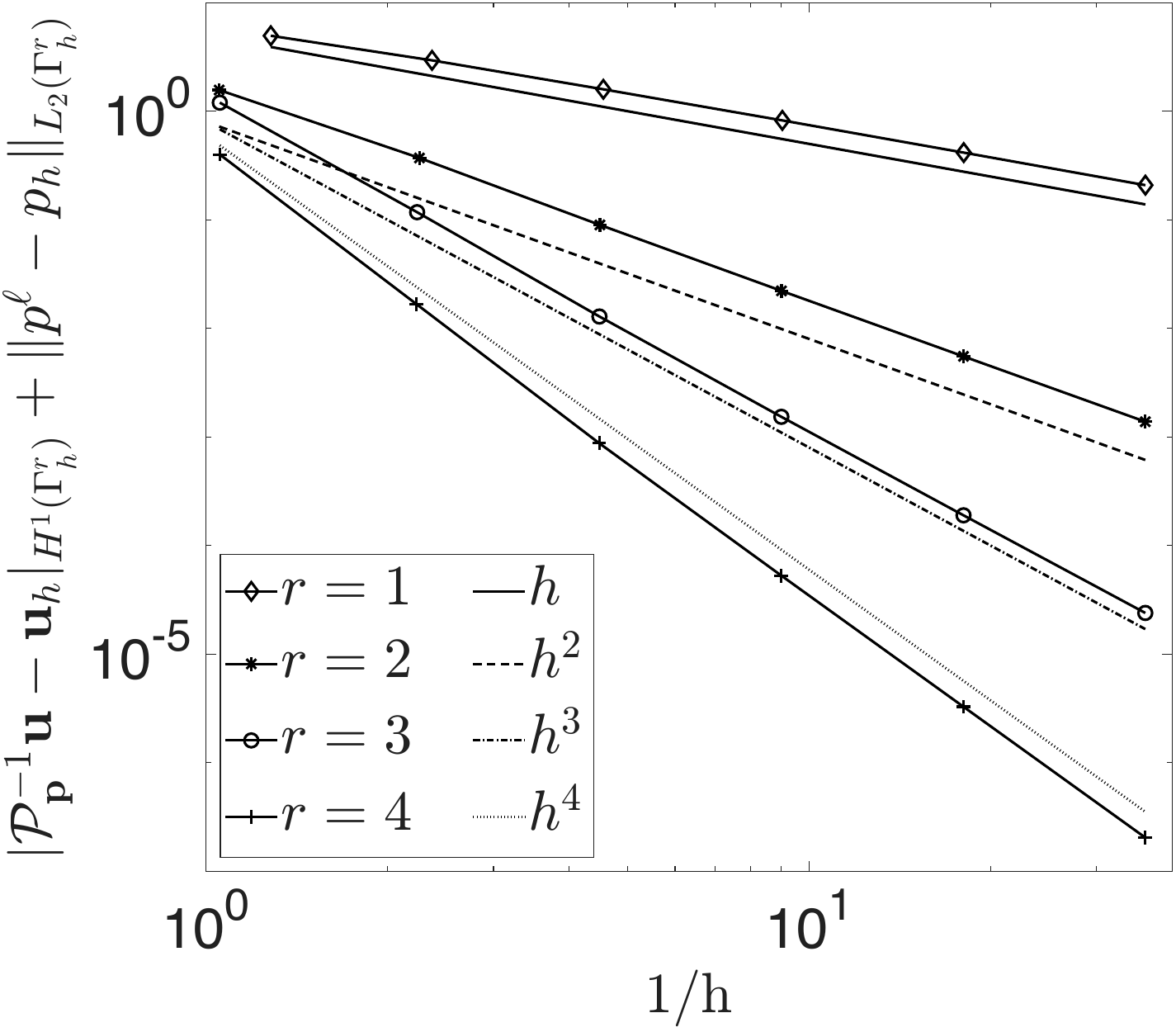}
\hspace{.3cm}
\includegraphics[alt={Plot of numerical experiments showing optimal L2 error decrease},scale=.26]{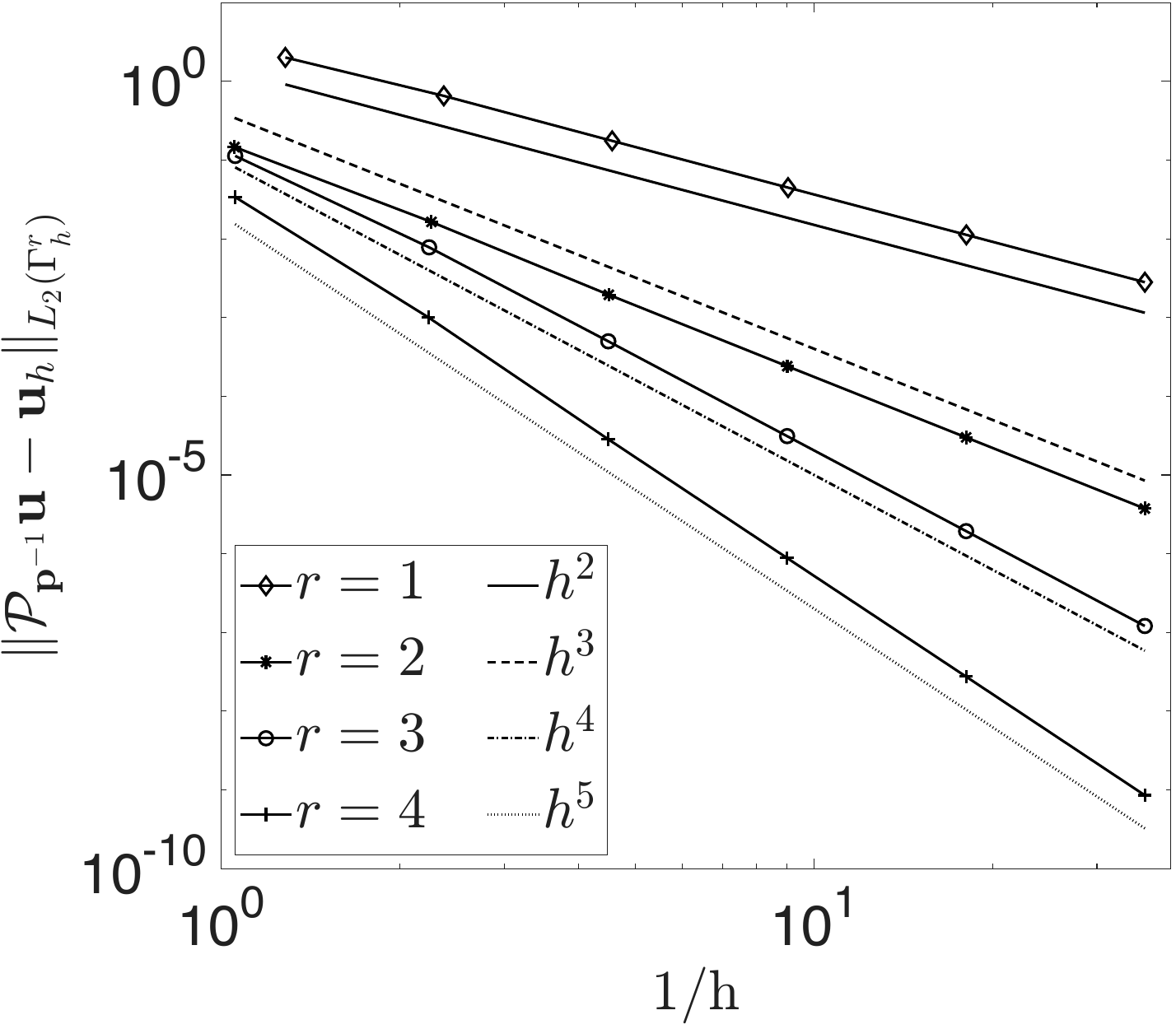}
\end{center}
\caption{Energy error $|\ipt \bu-\bu_h|_{H_h^1(\Gamma_{h}^k)}+ \|p^e-p_h\|_{L_2(\Gamma_{h}^k)}$ (left) and $L_2$ error $\|\ipt \bu -\bu_h\|_{L_2(\Gamma_{h}^k)}$ (right) for linear, quadratic, cubic, and quartic $BDM$ ($r=1,2,3,4$) elements with $k=r$.}
\label{fig_n1}
\end{figure}

We next consider surfaces with surface approximation degree one less than that of the finite element space, i.e., $k=r-1$.  In Figure \ref{fig_n2} below we confirm that \eqref{eq:energyerror} is sharp as taking $k=r-1$ leads to suboptimal convergence $O(h^{r-1})=O(h^k)$ in both the velocity $H^1$ and pressure errors.  Standard error estimates for the scalar Laplace-Beltrami problem yield order of convergence $h^r+h^{k+1}$ in the energy norm \cite{Demlow09, Dziuk88}.  As has been consistently observed in other works on the topic, the geometric error for the energy norm thus converges more slowly for problems of surface vector Laplace type than for scalar surface problems.  However, an interesting phenomenon occurs when $k$ is even.  In both plots in Figure \ref{fig_n2} below we see in the case $k=2$, $r=3$ that the error initially converges with order close to $h^3$, but then eventually decreases to order $h^2$ convergence as $h$ decreases.  In contrast, for $k$ odd the convergence rates are clearly of the expected order $h^k$ throughout the convergence history.  To explain this difference, we also plot a convergence history when $\gamma$ is the unit sphere for $r=3$ and $k=2$; for this plot the mesh nodes were placed on the sphere.  Here we see clear order $h^3$ convergence, which is better than expected.  This is due to superconvergence $\|d\|_{L_\infty(\Gamma_h^k)} \lesssim h^{k+2}$ and $\|\bnu-\bnu_h\|_{L_\infty(\Gamma_h^k)} \lesssim h^{k+1}$ that occurs on the sphere when $k$ is even (cf. \cite{Kil25} for more numerical experiments and \cite{HPZPP} for more general exploration of superconvegence phenomena in even-order geometry approximations).  This can be proved by elementary Taylor expansion arguments in the one-dimensional case.  This superconvergence phenomenon likely also explains the increased initial order of convergence observed in Figure \ref{fig_n2} on an ellipsoid.

\begin{figure}[h]
\begin{center}
\includegraphics[alt={Plot of numerical experiments showing velocity H^1 error decrease when k=r-1},scale=.26]{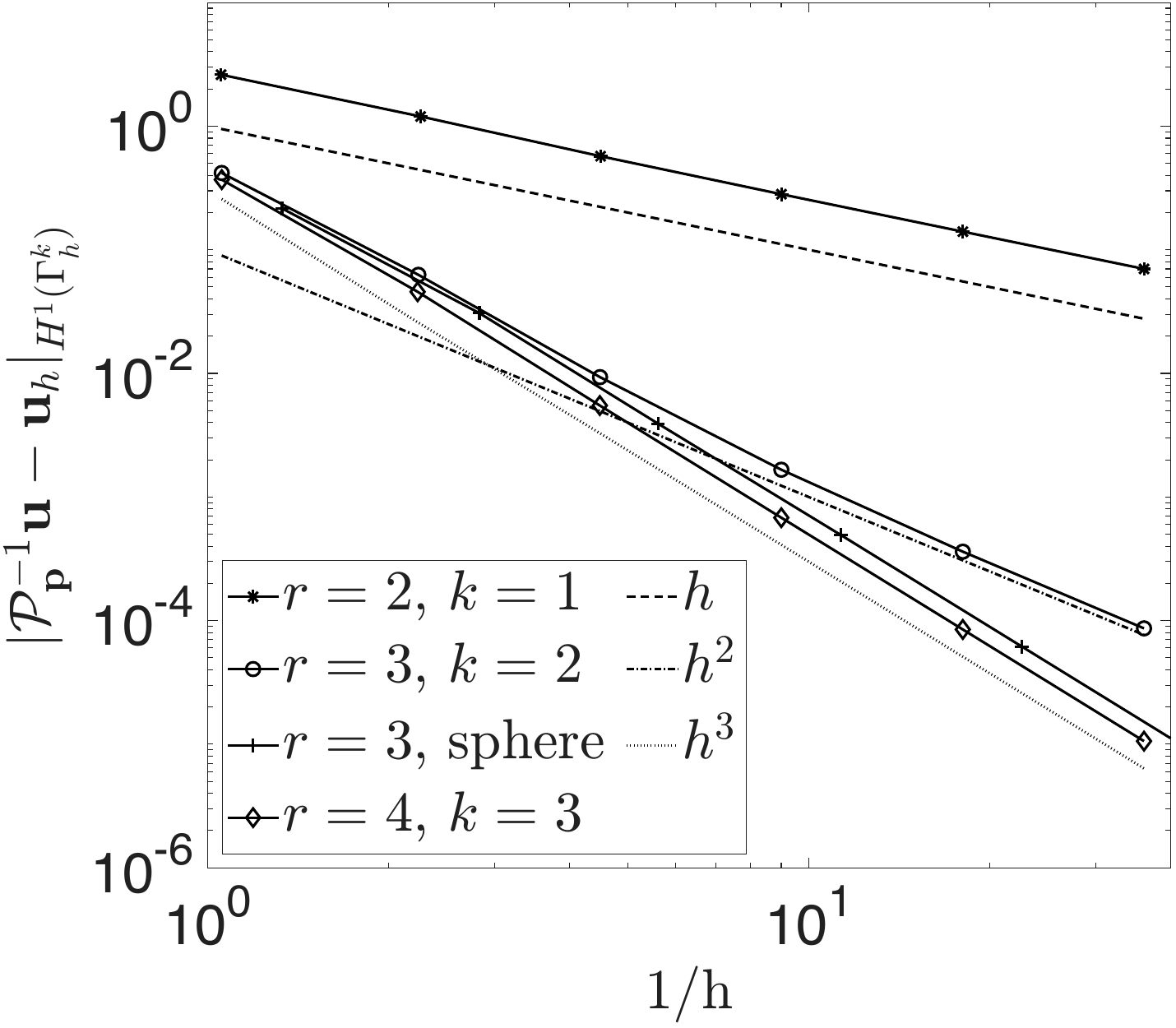}
\hspace{.3cm}
\includegraphics[alt={Plot of numerical experiments showing L2 pressure error decrease when r=k-1},scale=.26]{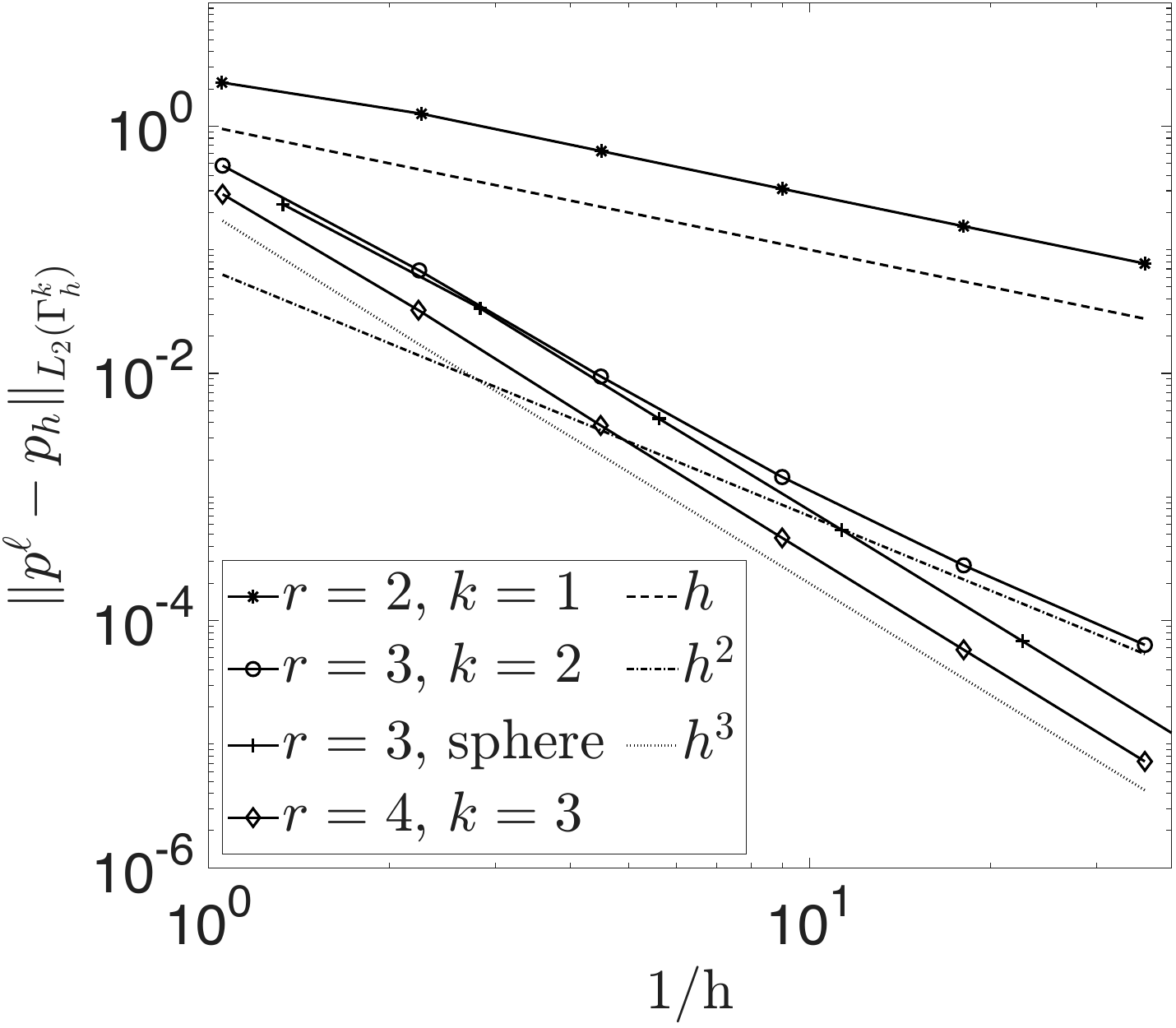}
\end{center}
\caption{$H^1$ error $|\ipt \bu-\bu_h|_{H_h^1(\Gamma_{h}^k)}$ (left) and pressure $L_2$ error $ \|p^e-p_h\|_{L_2(\Gamma_{h}^k)}$ (right) for linear, quadratic, cubic, and quartic $BDM$ ($r=1,2,3,4$) elements with $k=r-1$.}
\label{fig_n2}
\end{figure}

To test the necessity of the condition $\max_{e \in \calE} |d(z_{1,e})-d(z_{2,e})| \lesssim h^3$ for obtaining optimal $L_2$ error estimates when $r=k=1$, we conducted five experiments where this condition is violated.  In all experiments a mesh was initially constructed with nodes lying on $\gamma$.  In Experiment 1 the mesh nodes were randomly perturbed in the normal direction to $\gamma$ with uniformly distributed signed distances lying in $[-0.5 h^2, 0.5 h^2]$.  In Experiment 2 even-indexed nodes $z$ were perturbed in the normal direction with $d(z) \in [0, h^2]$ and odd nodes were perturbed so that $d(z) \in [-h^2,0]$, in both cases with uniform distribution.  In Experiment 3 the nodes were perturbed with uniform random distribution so that $d(z) \in [0, h^2]$.  In Experiment 4 even nodes were perturbed with uniform distribution $d \in [0, h^2]$ and odd nodes with uniform distribution $ \in [-0.5 h^2, 0]$.  In Experiment 5, a squared random uniform distribution was used with $d(z) \in [-0.5 h^2, 0.5 h^2]$.  As seen in Figure \ref{fig_n3}, in all cases this led to optimal order $h^2$ convergence of the $L_2$ velocity error even though $\max_{e \in \calE} |d(z_{1,e})-d(z_{2,e})| \approx h^2$. Thus, there is no indication that the extra condition used in our proof of $L_2$ estimates when $r=k=1$ is necessary to obtain optimal error decrease.   

\begin{figure}[h]
\begin{center}
\includegraphics[alt={Plot of numerical experiments showing L2 velocity error decrease when r=k=1 with mesh nodes perturbed off of the surface in various ways},scale=.26]{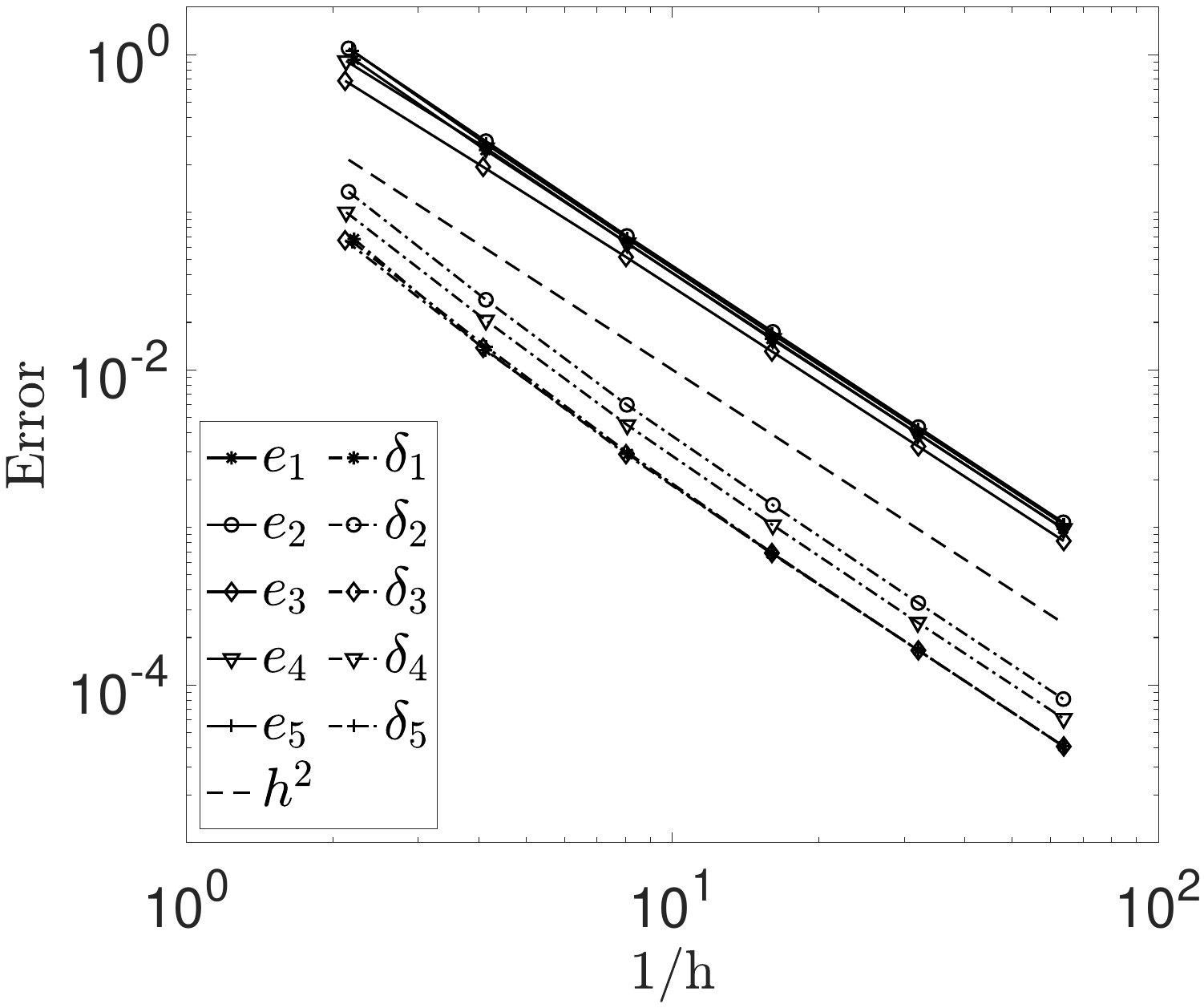}
\end{center}
\caption{$L_2$ errors $e_i=|\ipt \bu-\bu_{h,i}|_{L_2(\Gamma_{h,k})}$ and maximum differences $\delta_i=\max_{e \in \calE_i} |d(z_{1,e})-d(z_{2,e})|$.  Here Experiment $i$ ($ 1 \le i \le 5$) is described in the text. }
\label{fig_n3}
\end{figure}

\section*{Funding}
 Both authors were partially supported by NSF grant DMS-2012326.

\bibliographystyle{siam}
\bibliography{literatur}

\end{document}